\documentclass[11pt]{amsart}
\usepackage{geometry}                
\geometry{letterpaper}                   
\usepackage{graphicx}
\usepackage{amssymb}
\usepackage{epstopdf}
\usepackage{geometry} 
\usepackage{graphicx}
\usepackage{amsmath} 
\numberwithin{equation}{section}
\usepackage{commath}
\usepackage{enumitem}
\usepackage{changepage}
\usepackage{cleveref}
\usepackage[normalem]{ulem}
\usepackage{bbm}
\usepackage{subfig}
\usepackage{float}
\usepackage{url}
\usepackage{mathtools}
\usepackage{amsthm} 
\usepackage[disable]{todonotes} 
\DeclareGraphicsRule{.tif}{png}{.png}{`convert #1 `dirname #1`/`basename #1 .tif`.png}

\DeclareMathOperator*{\esssup}{ess\,sup}

\theoremstyle{plain}
\newtheorem{theorem}{Theorem}[section]
\newtheorem*{theorem*}{Theorem}
\newtheorem{lemma}[theorem]{Lemma}
\newtheorem{corollary}[theorem]{Corollary}
\newtheorem{proposition}[theorem]{Proposition}

\theoremstyle{remark}
\newtheorem*{remark}{Remark}
\theoremstyle{definition}
\newtheorem{definition}[theorem]{Definition}
\allowdisplaybreaks
\title[Stability for piecewise-smooth solutions]{Criteria for the a-contraction and stability for the piecewise-smooth solutions to hyperbolic balance laws}
\author[Krupa]{Sam G. Krupa}
\address{Department of Mathematics\\ The University of Texas at Austin\\ Austin, TX 78712\\ USA}
\email{skrupa@math.utexas.edu}
\thanks{This work was partially supported by NSF Grant DMS-1614918.}
\date{April 20th, 2019}                                           

\begin{document}
\keywords{System of conservation laws, compressible Euler equation, Euler system, isentropic solutions, generalized Riemann problem, piecewise-smooth solutions, Rankine--Hugoniot discontinuity, shock, stability, uniqueness.}
\subjclass[2010]{Primary 35L65; Secondary  76N15, 35L45, 35A02, 35B35, 35D30, 35L67, 35Q31, 76L05, 35Q35, 76N10}
\begin{abstract}
We show uniqueness and stability in $L^2$ and for all time for piecewise-smooth solutions to hyperbolic balance laws. We have in mind applications to gas dynamics, the isentropic Euler system and the  full Euler system for a polytropic gas in particular. We assume the discontinuity in the piecewise smooth solution is an extremal shock. We use only mild hypotheses on the system. Our techniques and result hold without smallness assumptions on the solutions. We can handle shocks of any size. We work in the class of bounded, measurable solutions satisfying a single entropy condition. We also assume a strong trace condition on the solutions, but this is weaker than $BV_{\text{loc}}$. We use the theory of a-contraction (see Kang and Vasseur [{\em Arch. Ration. Mech. Anal.}, 222(1):343--391, 2016]) developed for the stability of pure shocks in the case without source.
\end{abstract}
\maketitle
\tableofcontents

\section{Introduction}

We consider an $n\times n$ system of balance laws,
\begin{align}
\label{system}
\begin{cases}
\partial_t u + \partial_x f(u)=G(u(\cdot,t))(x),\mbox{ for } x\in\mathbb{R},\mbox{ } t>0,\\
u(x,0)=u^0(x) \mbox{ for } x\in\mathbb{R}.
\end{cases}
\end{align}

For a fixed $T>0$ (including possibly $T=\infty$), the \emph{unknown} is $u\colon\mathbb{R}\times[0,T)\to \mathbb{M}^{n\times 1}$. The function $u^0\colon\mathbb{R}\to\mathbb{M}^{n\times 1}$ is in $L^\infty(\mathbb{R})$ and is the \emph{initial data}. The function $f\colon\mathbb{M}^{n\times 1}\to\mathbb{M}^{n\times 1}$ is the \emph{flux function} for the system. The \emph{source term} $G\colon (L^2(\mathbb{R}))^n\to (L^2(\mathbb{R}))^n$ is translation invariant. We also ask that $G$ be Lipschitz continuous from $(L^2(I))^n\to (L^2(I))^n$ for every interval $I\subseteq\mathbb{R}$, with a Lipschitz constant uniform in $I$. In other words, there exists $C_G>0$ such that
\begin{align}\label{G_acts_like}
\norm{G(g_1)-G(g_2)}_{L^2(I)}\leq C_G \norm{g_1-g_2}_{L^2(I)},
\end{align}
for every $g_1,g_2\in(L^2(\mathbb{R}))^n$ and for every interval $I\subseteq\mathbb{R}$.  Furthermore, we require that $G$ is bounded on $(L^\infty(\mathbb{R}))^n$:
\begin{align}\label{G_acts_like_2}
\norm{G(g)}_{L^\infty(\mathbb{R})}\leq C_G \norm{g}_{L^\infty(\mathbb{R})},
\end{align}
for every $g\in(L^\infty(\mathbb{R}))^n$. 

We assume the system \eqref{system} is endowed with a strictly convex entropy $\eta$ and associated entropy flux $q$. Note the system will be hyperbolic on the state space where $\eta$ exists. We assume the functions $f, \eta$, and $q$ are defined on an open convex state space $\mathcal{V}\subset\mathbb{R}^n$. We assume $f,q\in C^2(\mathcal{V})$ and $\eta \in C^3(\mathcal{V})$.  By assumption, the entropy $\eta$ and its associated entropy flux $q$ verify the following compatibility relation:
\begin{align}\label{compatibility_relation_eta_system}
\partial_j q =\sum_{i=1}^n \partial_i\eta\partial_j f_i,\hspace{.25in} 1\leq j \leq n.
\end{align}
By convention, the relation \eqref{compatibility_relation_eta_system} is rewritten as 
\begin{align}
\nabla q = \nabla \eta \nabla f,
\end{align}
where $\nabla f$ denotes the matrix $(\partial_j f_i)_{i,j}$.

For $u\in\mathcal{V}$ where $\eta$ exists , the system \eqref{system} is hyperbolic, and the matrix $\nabla f(u)$ is diagonalizable, with eigenvalues 
\begin{align}
\lambda_1(u)\leq \ldots \leq \lambda_n(u),
\end{align}
called \emph{characteristic speeds}.

We consider both bounded  \emph{classical} and bounded \emph{weak} solutions to \eqref{system}. A weak solution $u$ is bounded and measurable and satisfies \eqref{system} in the sense of distributions. I.e., for every Lipschitz continuous test function $\Phi:\mathbb{R}\times[0,T)\to \mathbb{M}^{1\times n}$ with compact support,
\begin{equation}
\begin{aligned}\label{u_solves_equation_integral_formulation_chitchat}
\int\limits_{0}^{T} \int\limits_{-\infty}^{\infty} \Bigg[\partial_t\Phi u + \partial_x\Phi f(u) \Bigg]\,dxdt +\int\limits_{-\infty}^{\infty} \Phi(x,0)u^0(x)\,dx
\\
=-\int\limits_{0}^{T} \int\limits_{-\infty}^{\infty}\Phi G(u(\cdot,t))(x)\,dxdt.
\end{aligned}
\end{equation}

We only consider solutions $u$ which are entropic for the entropy $\eta$. That is, they satisfy the following entropy condition:
\begin{align}\label{entropy_condition_distributional_system_chitchat}
\partial_t \eta(u)+\partial_x q(u) \leq \nabla\eta(u)G(u(\cdot,t))(x),
\end{align}
in the sense of distributions. I.e., for all positive, Lipschitz continuous test functions $\phi:\mathbb{R}\times[0,T)\to\mathbb{R}$ with compact support:
 \begin{equation}
\begin{aligned}\label{u_entropy_integral_formulation_chitchat}
\int\limits_{0}^{T} \int\limits_{-\infty}^{\infty}\Bigg[\partial_t\phi\big(\eta(u(x,t))\big)+&\partial_x \phi \big(q(u(x,t))\big)\Bigg]\,dxdt+ \int\limits_{-\infty}^{\infty}\phi(x,0)\eta(u^0(x))\,dx\geq
\\
&-\int\limits_{0}^{T} \int\limits_{-\infty}^{\infty}\phi\nabla\eta(u(x,t))G(u(\cdot,t))(x)\,dxdt.
\end{aligned}
\end{equation}

For $u_L,u_R\in\mathbb{R}^n$, the function $u:\mathbb{R}\times[0,\infty)\to\mathbb{R}^n$ defined by
\begin{align}\label{shock_solution_system}
u(x,t)\coloneqq
\begin{cases}
u_L &\mbox{ if } x<\sigma t ,\\
u_R &\mbox{ if } x>\sigma t
\end{cases}
\end{align}
is a weak solution to \eqref{system} if and only if  $u_L,u_R$, and $\sigma$ satisfy the Rankine-Hugoniot jump compatibility relation:
\begin{align}\label{RH_jump_condition}
f(u_R)-f(u_L)=\sigma (u_R-u_L),
\end{align}
in which case \eqref{shock_solution_system} is called a \emph{shock} solution.

Moreover, the solution \eqref{shock_solution_system} will be entropic for $\eta$ (according to \eqref{u_entropy_integral_formulation_chitchat})  if and only if,
\begin{align}\label{entropic_shock_condition_system}
q(u_R)-q(u_L)\leq \sigma (\eta(u_R)-\eta(u_L)).
\end{align}
In this case, $(u_L,u_R,\sigma)$ is an \emph{entropic Rankine--Hugoniot discontinuity}.

For a fixed $u_L$, we consider the set of $u_R$ which satisfy \eqref{RH_jump_condition} and \eqref{entropic_shock_condition_system} for some $\sigma$. For a general $n\times n$ strictly hyperbolic system of conservation laws endowed with a strictly convex entropy , we know that locally this set of $u_R$ values is made up of $n$ curves (see for example \cite[p.~140-6]{lefloch_book}).

The present paper concerns the finite-time stability of piecewise-smooth solutions to \eqref{system}, working in the $L^2$ setting. We work in a very general setting. Our techniques are based on the theory of shifts as developed by Vasseur within the context of the relative entropy method (see \cite{VASSEUR2008323}). We consider systems of the form \eqref{system}, with minimal assumptions on the shock families. We ask that the extremal shock speeds (1-shock and n-shock speeds) are separated from the intermediate shock families. If we want to consider 1-shocks, we ask that the 1-shock family satisfy the Liu entropy condition (shock speed decreases as the right-hand state travels down the 1-shock curve), and we ask that the shock strength increase in the sense of relative entropy (an $L^2$ requirement) as the right-hand state travels down the 1-shock curve. If we want to consider n-shocks, we ask for similar requirements on the n-shock family. 

 The intermediate wave families have far fewer requirements. The intermediate shock curves might not even be well-defined and characteristic speeds might cross.
 
In particular, the results in this article apply to both the isentropic Euler system and the  full Euler system for a polytropic gas, viewing both systems in Eulerian coordinates.

We study solutions $\bar{u}$ which are piecewise-Lipschitz continuous in the space variable $x$. We study the stability and uniqueness of these solutions among a large class of weak solutions $u$ which are bounded, measurable, entropic for at least one strictly convex entropy, and verify a strong trace condition (weaker than $BV_{\text{loc}}$). We do not make small data assumptions. We require the piecewise-smooth  $\bar{u}$ contain a single shock of extremal family. However, the rougher solutions $u$ which we compare to this solution $\bar{u}$ may have shocks of any type or family.

Previous results in the theory of stability and a-contraction have only been able to consider initial data which is pure shock (piecewise constant).  This present paper extends the ideas in the theory of a-contraction (in particular as developed in \cite{MR3519973}).

\begin{figure}[tb]
      \includegraphics[width=\textwidth]{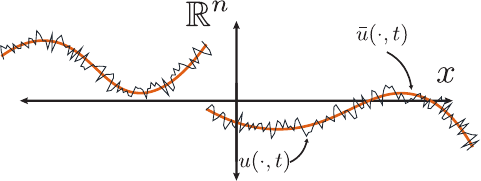}
  \caption{In this paper, we study the stability of solutions $u$ (to \eqref{system}) which are $L^2$ perturbations of a piecewise-smooth solution $\bar{u}$, as shown in this schematic. The nonlinearity in the solution $\bar{u}$ causes significant technical challenges not present in the piecewise-constant case (for the piecewise-constant case, see \cite{Leger2011,MR3519973}).}\label{figure_diagram}
\end{figure}

The point of the present article is this: As discussed for the case of nonlocal scalar balance laws in \cite{scalar_move_entire_solution}, when studying the stability up to a translation in space of solutions piecewise-constant in space, we can view the shift function which is doing the translation as simply determining at which points do we want to see the left hand state of our solution, and at which points do we want to see the right hand state of our solution. However, for piecewise-\emph{smooth} data, the shift function cannot be viewed like this. Instead, the shift function is viewed as artificially translating in space our solution. If the solution is non-constant away from the discontinuity, this artificial translation creates a linear term in the entropy dissipation (see \Cref{local_entropy_dissipation_rate_systems}), which we cannot Gronwall in comparison with the quadratic terms. The answer is to create a shift function which not only neutralizes entropy production at the discontinuity of the solution, but also creates  additional negative entropy (see \Cref{systems_entropy_dissipation_room}) we can use to cancel out the linear term in the Gronwall argument (see \Cref{figure_diagram}). Regarding the idea of additional negative entropy caused by a shift, see \cite{2017arXiv171207348K}.

This work is related to the \emph{generalized Riemann problem}, which concerns solutions with initial data which is piecewise-smooth instead of simply piecewise-constant across a single jump discontinuity. For existence and uniqueness results for the generalized Riemann problem, see \cite{MR2070131,MR1680921}. However, these results have small data limitations.

Previous results in this direction include Chen, Frid, and Li \cite{MR1911734} where for the full Euler system,  they show uniqueness and long-time stability for perturbations of Riemann initial data among a large class of entropy solutions (locally $BV$ and without smallness conditions) for the $3\times3$ Euler system in Lagrangian coordinates. They also show uniqueness for solutions piecewise-Lipschitz in $x$. For an extension to the relativistic Euler equations, see Chen and Li \cite{MR2068444}. However, these papers do not give $L^2$ stability results for all time.

We study the stability in $L^2$ of piecewise-smooth solutions to the system of balance laws \eqref{system}.  The study of piecewise-smooth solutions takes us a step beyond the classical Riemann problem, which considers piecewise-constant initial data. Furthermore, when the system \eqref{system} has the source term $G$, it is important to study piecewise-smooth solutions and just not piecewise-constant, for the source term may mean that even pure shock wave initial data evolves into something more complicated. For a nonlocal example of this phenomenon, consider the solution to the Riemann problems for the Burgers--Hilbert equation, which is Burgers equation with a nonlocal source term \cite{MR3248030,MR3605552,MR3348783,MR2982741}.

Our method is the relative entropy method, a technique created by Dafermos \cite{doi:10.1080/01495737908962394,MR546634} and DiPerna \cite{MR523630}  to give $L^2$-type stability estimates between a Lipschitz continuous solution and a rougher solution, which is only weak and entropic for a strictly convex entropy (the so-called \emph{weak-strong} stability theory). For a system \eqref{system} endowed with an entropy $\eta$, the technique of relative entropy considers the quantity called the
\emph{relative entropy}, defined as
\begin{align}
\eta(u|v)\coloneqq \eta(u)-\eta(v)-\nabla\eta(v)\cdot (u-v).
\end{align}

Similarly, we define relative entropy-flux,
\begin{align}
q(u;v)\coloneqq q(u)-q(v)-\nabla\eta(v)\cdot (f(u)-f(v)).
\end{align}

Remark that for any constant $v\in\mathbb{R}^n$, the map $u\mapsto\eta(u|v)$ is an entropy for the system \eqref{system}, with associated entropy flux $u\mapsto q(u;v)$. Furthermore, if $u$ is a weak solution to \eqref{system} and entropic for $\eta$, then $u$ will also be entropic for $\eta(\cdot|v)$. This can be calculated directly from \eqref{system} and \eqref{entropy_condition_distributional_system_chitchat} -- note that the map $u\mapsto\eta(u|v)$ is basically $\eta$ plus a linear term.

Moreover, by virtue of $\eta$ being \emph{strictly} convex, the relative entropy is comparable to the $L^2$ distance, in the following sense:

\begin{lemma}\label{entropy_relative_L2_control_system} For any fixed compact set $V\subset\mathcal{V}$, there exists  $c^*,c^{**}>0$ such that for all $u,v\in V$,
\begin{align}
c^*\abs{a-b}^2\leq \eta(u|v)\leq c^{**}\abs{a-b}^2.
\end{align}
The constants $c^*,c^{**}$ depend on $V$ and bounds on the second derivative of $\eta$.
\end{lemma}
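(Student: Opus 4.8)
The plan is to deduce the estimate from a second-order Taylor expansion of $\eta$ together with a compactness argument (the statement as written should have $u,v$ in place of $a,b$).

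First I would pass from $V$ to its closed convex hull $K\coloneqq\overline{\mathrm{conv}}(V)$. Since $\mathcal V$ is convex and $V\subset\mathcal V$, we have $K\subset\mathcal V$, and since $V$ is compact so is $K$; moreover, for any $u,v\in V$ the entire segment $\{v+s(u-v):s\in[0,1]\}$ lies in $K$ and hence in the region where $\eta\in C^3$. This is the only place the geometry of the domain enters.

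Next, applying Taylor's formula with integral remainder to $s\mapsto\eta(v+s(u-v))$ and observing that the zeroth- and first-order terms are precisely what is subtracted off in the definition of the relative entropy, I obtain
\[
\eta(u|v)=\int_0^1(1-s)\,(u-v)^{\mathsf T}\nabla^2\eta\big(v+s(u-v)\big)(u-v)\,ds
\]
(one may equally use the Lagrange form $\eta(u|v)=\tfrac12(u-v)^{\mathsf T}\nabla^2\eta(\xi)(u-v)$ for some $\xi$ on the segment). I then set
\[
c^{**}\coloneqq\tfrac12\sup_{w\in K}\norm{\nabla^2\eta(w)},\qquad
c^*\coloneqq\tfrac12\inf_{w\in K}\lambda_{\min}\!\big(\nabla^2\eta(w)\big),
\]
where $\lambda_{\min}$ denotes the least eigenvalue. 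Since $\eta\in C^3$, the map $w\mapsto\nabla^2\eta(w)$ is continuous, so on the compact set $K$ the supremum is finite; and since $\eta$ is strictly convex, $\nabla^2\eta(w)$ is positive definite for every $w$, so $w\mapsto\lambda_{\min}(\nabla^2\eta(w))$ is a strictly positive continuous function on $K$ and therefore attains a positive minimum. Using $\lambda_{\min}(A)\abs{z}^2\le z^{\mathsf T}Az\le\norm{A}\abs{z}^2$ inside the integral, together with $\int_0^1(1-s)\,ds=\tfrac12$, yields $c^*\abs{u-v}^2\le\eta(u|v)\le c^{**}\abs{u-v}^2$, with $c^*,c^{**}$ depending only on $K$ (hence on $V$) and on $\nabla^2\eta$ restricted to $K$, as claimed.

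There is essentially no serious obstacle here; the lemma is a soft consequence of strict convexity and compactness. The one point worth stressing is that strict convexity must be used in the strong (pointwise positive-definite Hessian) sense to obtain the lower bound $c^*>0$: for a merely convex entropy the estimate can fail — e.g. $\eta(u)=u^4$ gives $\eta(u|0)=u^4$, which is not bounded below by a multiple of $u^2$ near the origin — so compactness of $K$ alone does not suffice without the uniform positive definiteness of $\nabla^2\eta$.
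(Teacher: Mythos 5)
Your proof is correct and follows the same Taylor-expansion route the paper itself points to (the paper simply cites Taylor's theorem and references \cite{Leger2011,VASSEUR2008323} rather than writing out the argument). Passing to the closed convex hull of $V$ to keep the integration segment inside $\mathcal V$ is a small but worthwhile piece of care, and your closing remark correctly identifies that ``strictly convex'' must be read as pointwise positive-definite Hessian for the lower bound to hold.
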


This lemma follows from Taylor's theorem; for a proof see \cite{Leger2011,VASSEUR2008323}.

Given a Lipschitz solution $\bar{u}$ to \eqref{system}, and a weak, entropic solution $u$, the method of relative entropy gives estimates on the growth in time of the quantity
\begin{align*}
\norm{\bar{u}(\cdot,t)-u(\cdot,t)}_{L^2(\mathbb{R})}
\end{align*}
by studying the time derivative $\partial_t\int\eta(u|\bar{u})\,dx$ and using the entropy inequality \eqref{entropy_condition_distributional_system_chitchat}. By \eqref{entropy_relative_L2_control_system}, we get $L^2$-type stability estimates.

Introducing a discontinuity into $\bar{u}$ causes difficulties in the method of relative entropy. In particular, simple examples for the scalar conservation laws show that a discontinuity in $\bar{u}$ prevents stability between $\bar{u}$ and $u$ in the form of the classical weak-strong estimates.

However, by allowing the discontinuity in $\bar{u}$ to move with an artificial speed which depends on $u$, we can recover weak-strong type estimates. Within the context of the relative entropy method, this theory of stability up to a shift was initiated in \cite{VASSEUR2008323} by Vasseur. Over the last decade, this theory of stability up to a shift has been matured and developed by Vasseur and his team. The first result was for pure shock wave initial data for the scalar conservation laws \cite{Leger2011_original}.  Further results include work on the scalar viscous conservation laws in both one space dimension \cite{MR3592682} and multiple \cite{multi_d_scalar_viscous_9122017}. Recently, work on the scalar conservation laws has allowed for many discontinuities to exist in the otherwise smooth $\bar{u}$ -- with each discontinuity shifted in such a way as to maintain $L^2$ stability between $\bar{u}$ and an arbitrary weak solution $u$ entropic for at least one entropy. With this, it is possible to make comparisons between two solutions which satisfy only one entropy condition, and thus show that one entropy condition is enough for uniqueness. See \cite{2017arXiv170905610K} (and the references therein) for more details. To study the $L^2$ stability of pure shock wave initial data in the systems case, the technique of a-contraction was introduced \cite{MR3519973,MR3479527,MR3537479,serre_vasseur,Leger2011}. For a general overview of theory of shifts and the relative entropy method, see \cite[Section 3-5]{MR3475284}. By considering stability up to a shift, the method of relative entropy can also be used to study the asymptotic limit when the limit is discontinuous (see \cite{MR3333670} for the scalar case, \cite{MR3421617} for systems).  There is a long history of using the relative entropy method to study the asymptotic limit. However, without the theory of shifts, it appears that only limits which are Lipschitz continuous can be studied (see  \cite{MR1842343,MR1980855,MR2505730,MR1115587,MR1121850,MR1213991,MR2178222,MR2025302} and \cite{VASSEUR2008323} for a survey).

The present article is a further step in the program of stability up to a shift.

In this paper, we continue the ideas introduced in \cite{scalar_move_entire_solution}. In \cite{scalar_move_entire_solution}, it is shown that the generalized characteristics of $u$ can be used as shift functions to kill growth in $L^2$ between a piecewise smooth solution $\bar{u}$ and weak solution to \eqref{system} entropic for the entropy $\eta$. Further, using the generalized characteristic as a shift function provides various benefits over using the previous shift function constructions, as discussed in \cite{scalar_move_entire_solution}. 

In this paper, we bring novel ideas from the scalar case in \cite{scalar_move_entire_solution} to the systems case. In the systems case, we need to use the theory of a-contraction. 

For the case of scalar, the generalized characteristics for $u$ are the natural shift functions to be using. In the systems case, we use a shift function which again is based on the generalized characteristics, but with a correction where the shift travels at greater-than-characteristic-speed due to a-contraction and the existence of multiple shock families in the systems case. 

On top of the benefits for generalized-characteristic-based shifts mentioned in \cite{scalar_move_entire_solution} (such as simplicity of analysis, ease of construction, enhanced control on the shifts, and strictly negative entropy creation) the use of generalized-characteristic-based shifts for the \emph{systems case} allows for simplified proofs compared to the previous state-of-the-art a-contraction result, \cite{MR3519973}. By having very obvious control on the speed of  generalized-characteristic-based shifts, we are able to obviate the need for many of the computations in the foregoing analysis \cite{MR3519973}.

For systems of conservation laws in one space dimension such as \eqref{system} (including the scalar conservation laws), we have non-uniqueness for solutions. We impose entropy conditions such as  \eqref{entropy_condition_distributional_system_chitchat}, motivated by physics, to try to weed out ``nonphysical'' solutions which have physical entropy decreasing (or according to \eqref{entropy_condition_distributional_system_chitchat}, mathematical entropy increasing). Remark that requiring more than one entropy condition (for more than one entropy) is impractical -- many systems only admit a single nontrivial entropy. In the scalar case, this approach has had tremendous success. In fact, requiring solutions satisfy the entropy condition \eqref{entropy_condition_distributional_system_chitchat} for at least one strictly convex entropy in $C^1$ is enough to get uniqueness for solutions (see \cite{panov_uniquness,delellis_uniquneness,2017arXiv170905610K}). However, even for the scalar case proving uniqueness with a single entropy condition has proved difficult. The first result \cite{panov_uniquness} was not until 1994. Furthermore, the first two results \cite{panov_uniquness,delellis_uniquneness} use techniques limited to the scalar case. They use the special connection between scalar conservation laws in one space dimension and Hamilton--Jacobi equations: the space derivative of the solution to a Hamilton--Jacabi equation is formally the solution to the associated scalar conservation law. Notably, \cite{2017arXiv170905610K} gives a proof of the single entropy condition for scalar conservation laws which works directly on the conservation law and utilizes the theory of shifts.
Moreover, progress for uniqueness of entropic solutions to \emph{systems} of conservation laws has been slow. The best theory so far is the Bressan, Crasta, and Piccoli  $L^1$ theory \cite{MR1686652} for uniqueness in the class of solutions with small total variation.  It would be interesting however to study the uniqueness of these solutions amongst a larger class. For example, existence of solutions with large data is known for the $2\times2$ Euler system -- but the uniqueness theory for such solutions with large data lags behind.

The situation for the hyperbolic conservation laws in multiple space dimensions is even more dire --  there is non-uniqueness  for entropic  solutions to incompressible and compressible Euler by virtue of the many highly oscillatory solutions created via convex integration or related techniques. For incompressible Euler, see two papers by De Lellis and Sz\'ekelyhidi \cite{MR2600877,MR2564474}. For compressible Euler, see \cite{MR3352460,MR3269641,MR3744380}.

However, there is still the possibility of pushing forward the theory of \emph{uniqueness} for hyperbolic systems of conservation laws in one space dimension. The current paper is a step in that direction -- utilizing the $L^2$-type relative entropy method and the constantly evolving theory of shifts.

In this article, we use the method of relative entropy, the theory of shifts and a-contraction. These theories are not perturbative. They enable us to get results without small data limitations. Further, by the nature of these theories, we only use a single entropy condition.

We present our main and most important theorem regarding $L^2$-type stability and uniqueness results. The hypotheses $(\mathcal{H})$ and $(\mathcal{H})^*$ in the theorem depend only on the hyperbolic part of the system \eqref{system} and the fixed piecewise-smooth solution $\bar{u}$. The hypotheses are related to conditions on 1-shocks and n-shocks and in particular are satisfied by the isentropic Euler and full Euler systems. These hypotheses are explained in detail in \Cref{hypotheses_on_system}. 

\begin{theorem}[Main theorem -- $L^2$ stability for entropic piecewise-Lipschitz solutions to hyperbolic systems of balance laws]\label{local_stability_systems}

 Fix $R,T>0$.

Fix $i\in\{1,n\}$. Assume that $u,\bar{u}\in L^\infty(\mathbb{R}\times[0,T))$. If $\bar{u}$ contains a 1-shock, assume the hypotheses $(\mathcal{H})$ hold. Likewise, if $\bar{u}$ contains an n-shock, assume the hypotheses $(\mathcal{H})^*$ hold.  Assume that $u$ and $\bar{u}$ are entropic for the entropy $\eta\in C^3(\mathbb{R}^n)$. Assume that $\bar{u}$ is Lipschitz continuous on $\{(x,t)\in\mathbb{R}\times[0,T) | x<s(t)\}$ and on $\{(x,t)\in\mathbb{R}\times[0,T) | x>s(t)\}$, where $s:[0,T)\to\mathbb{R}$ is a Lipschitz function . Assume also that $u$ verifies the strong trace property (\Cref{strong_trace_definition}).
  Assume also that there exists $\rho>0$ such that for all $t\in[0,T)$ 
\begin{align}\label{gap_local_case}
\abs{\bar{u}(s(t)+,t)-\bar{u}(s(t)-,t)}>\rho.
\end{align}

Then there exists a Lipschitz continuous function $X:[0,T)\to\mathbb{R}$ with $X(0)=0$ and constants $\mu_1,\mu_2,r>0$ such that, 

\begin{align}\label{main_local_stability_result}
\int\limits_{-R+s(0)}^{R+s(0)}\abs{u(x,t_0)-\bar{u}(x+X(t_0),t_0)}^2\,dx\leq \mu_2 e^{\mu_1 t_0}\int\limits_{-R-rt_0+s(0)}^{R+rt_0+s(0)}\abs{u^0(x)-\bar{u}^0(x)}^2\,dx ,
\end{align}
for all $t_0\in[0,T)$.

Moreover, we have control on $X$:
\begin{align}\label{L2_control_shift_piecewise_systems}
\int\limits_0^{t_0} (\dot{X}(t))^2\,dt\leq \mu_2(1+e^{\mu_1 t_0})\int\limits_{-R-rt_0+s(0)}^{R+rt_0+s(0)}\abs{u^0(x)-\bar{u}^0(x)}^2\,dx.
\end{align} 

\end{theorem}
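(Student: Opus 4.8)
The plan is to run a relative-entropy/$a$-contraction argument with a carefully constructed shift $X$, following the scalar strategy of \cite{scalar_move_entire_solution} but now using the $a$-contraction machinery of \cite{MR3519973} to handle the systems case. First I would localize: pick a large cone with apex above $(s(0),0)$ and slope $r$ equal to (an upper bound for) the maximal characteristic speed of the system on the compact range of $u,\bar u$, so that finite speed of propagation reduces everything to the truncated integral on the right of \eqref{main_local_stability_result}. All computations then take place inside this cone, and $r$ is fixed by the sup of $|\lambda_i|$ over the relevant compact state set (plus the Lipschitz bound on $G$ contributes nothing to the speed).

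Next I would \emph{construct the shift} $X$. Write the shifted discontinuity position as $h(t) = s(t) + X(t)$ or, more precisely, parametrize by the generalized characteristic of $u$ emanating from $s(0)$, corrected upward (for a $1$-shock; downward for an $n$-shock) by a term forcing strictly-greater-than-characteristic speed, exactly as flagged in the introduction. The defining ODE for $\dot X$ should be chosen so that: (i) at the discontinuity of $\bar u$ the flux terms in the relative-entropy balance are dominated in the $a$-contraction sense — this is where hypotheses $(\mathcal H)$ / $(\mathcal H)^*$ (Liu condition on the extremal family plus monotonicity of shock strength in relative entropy) enter, giving a weight $a(t)$ with $\dot a$ of a sign that produces a \emph{strictly negative} entropy-dissipation surplus at the shock (cf.\ the referenced \Cref{systems_entropy_dissipation_room}); and (ii) that surplus has enough room to absorb the linear-in-$\dot X$ term coming from $\partial_t \bar u \neq 0$ away from the shock (the term in \Cref{local_entropy_dissipation_rate_systems}). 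Concretely, I expect $\dot X$ to be defined as a bounded Lipschitz function of the strong traces $u(h(t)\pm,t)$ and of $\bar u(s(t)\pm,t)$; boundedness and Lipschitz regularity of $X$ follow because the traces live in a compact set and the defining velocity field is $C^1$ there, which also immediately gives $X(0)=0$.

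Then I would \emph{differentiate} $E(t) := \int a(t)\,\eta(u(x,t)\,|\,\bar u(x+X(t),t))\,dx$ over the truncated spatial interval. Splitting at $x = h(t)$ into the two regions where $\bar u$ is Lipschitz, the computation produces: (a) interior terms controlled by $\int \eta(u|\bar u)\,dx$ using $\bar u \in \mathrm{Lip}$, Lipschitz dependence of $G$ in $L^2$ (hypotheses \eqref{G_acts_like}–\eqref{G_acts_like_2}), and $C^2$ regularity of $f,q$ — these give the $\mu_1$ in the exponential; (b) boundary terms at the edges of the cone, which are $\le 0$ by the choice of slope $r$ (strong traces exist by \Cref{strong_trace_definition}); (c) the shock terms at $x=h(t)$, which by the $a$-contraction construction are bounded above by $-\,c\,(\dot X(t))^2 + (\text{Lipschitz-in-}\dot X\text{ cross term}) \le -\tfrac{c}{2}(\dot X(t))^2 + C\int \eta(u|\bar u)\,dx$, using Young's inequality and the nonlinear term produced by $\partial_t\bar u$. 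Combining, $\frac{d}{dt}E(t) \le \mu_1 E(t) - \tfrac{c}{2}(\dot X(t))^2$. Grönwall's inequality together with \Cref{entropy_relative_L2_control_system} (to pass between $\eta(u|\bar u)$ and $|u-\bar u|^2$, and to handle the bounded oscillation of $a$) yields \eqref{main_local_stability_result}; integrating the same differential inequality after dropping the good term on the left and using $E \ge 0$ gives $\tfrac{c}{2}\int_0^{t_0}(\dot X)^2\,dt \le E(0) + \mu_1\int_0^{t_0}E(t)\,dt$, and bounding $\int_0^{t_0}E$ by the already-proven exponential estimate produces \eqref{L2_control_shift_piecewise_systems}.

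The main obstacle I expect is step (c) in the presence of the \emph{nonlinear} background $\bar u$: verifying that the shift velocity can be chosen so that the $a$-contraction dissipation at the shock genuinely controls \emph{both} the quadratic $(\dot X)^2$ term \emph{and} the linear term $\dot X \cdot \partial_x \bar u$ (which has no analogue in the piecewise-constant theory of \cite{MR3519973}), uniformly for shocks of arbitrary size. This requires the quantitative content of the hypotheses — that the extremal-shock dissipation functional is strictly coercive in the gap $\rho$ of \eqref{gap_local_case} and that the weight $a(t)$ stays in a fixed compact subinterval of $(0,\infty)$ — and it is precisely here that the careful construction of the generalized-characteristic-based shift (with its explicit, transparent speed bound) pays off by making the needed inequality a local, finite-dimensional check rather than a delicate global estimate. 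A secondary technical point is justifying the differentiation of $E(t)$ and the boundary/shock flux identities rigorously for merely bounded measurable $u$: this is handled by the strong trace hypothesis plus a now-standard approximation of $\mathbbm{1}_{\{x < h(t)\}}$ by Lipschitz test functions in \eqref{u_solves_equation_integral_formulation_chitchat} and \eqref{u_entropy_integral_formulation_chitchat}, as in \cite{VASSEUR2008323,Leger2011,MR3519973}.
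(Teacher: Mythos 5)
Your proposal mirrors the paper's proof: cone localization with slope $r$ chosen so the lateral boundary terms are nonpositive, a Filippov shift built from a generalized characteristic with an $a$-contraction correction (\Cref{systems_entropy_dissipation_room}), a strictly negative dissipation surplus at the shock, Young's inequality to absorb the linear $\dot X\cdot\partial_x\bar u$ term from \Cref{local_entropy_dissipation_rate_systems}, and then Gr\"onwall plus a bootstrap for \eqref{L2_control_shift_piecewise_systems}. Two small corrections to your description, neither fatal: the paper works with a \emph{constant} weight $a$ (the negative surplus is produced by the uniform gap $\rho$ through \Cref{entropy_lost_right_side_1_shock} and \Cref{dissipation_negative_theorem}, not by a sign on $\dot a$), and the Filippov velocity $V(u,t)=\lambda_1(u)-C_*\mathbbm{1}_{\{u\,|\,a\eta(u|\bar u_+)<\eta(u|\bar u_-)\}}(u)$ pushes the shift to a speed \emph{below} $\lambda_1$ (a larger leftward speed), not above; likewise $r$ is pinned down by $\abs{q(u;\bar u)}\le r\,\eta(u|\bar u)$ rather than by the maximal characteristic speed per se.
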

\begin{remark}
\hfill

\begin{itemize}
\item
The constants $\mu_1,\mu_2>0$ depend on $a$, $\rho$, $\norm{u}_{L^\infty}$, $\norm{\bar{u}}_{L^\infty}$, and bounds on the derivatives of $\eta$ on the range of $u$ and $\bar{u}$.    In addition, $\mu_1$ depends on $C_G$ (see \eqref{G_acts_like}), $\mbox{Lip}[\bar{u}]$, $R$, $T$, and bounds on the derivatives of $f$ on the range of $u$ and $\bar{u}$. Note that $r$ only depends on bounds on the derivatives of $f$ and $\eta$ (on the range of $u$ and $\bar{u}$). 
\item
As opposed to \eqref{G_acts_like_2}, the proof of \Cref{local_stability_systems} will in fact go through whenever we have an estimate of the form
\begin{align}\label{systems_new_estimate_remark_more_general}
\abs{\int\limits_{x_1}^{x_2} \nabla\eta(u(x,t)|\bar{u}(x+X(t),t))G(u(\cdot,t))(x)\,dx} \leq C \int\limits_{x_1}^{x_2}\abs{\nabla\eta(u(x,t)|\bar{u}(x+X(t),t))}\,dx,
\end{align}
for $x_1,x_2\in\mathbb{R}$ and some constant $C>0$. Note that $u\in L^\infty$ and \eqref{G_acts_like_2} implies \eqref{systems_new_estimate_remark_more_general}.
\item
Note that H\"older's inequality and \eqref{L2_control_shift_piecewise_systems} give control on the shift in the form of 
\begin{align}
\frac{1}{t_0}\int\limits_0^{t_0}\abs{\dot{X}(t)}\,dt \leq \frac{\sqrt{\mu_2(1+e^{\mu_1 t_0})}}{\sqrt{t_0}}\norm{u^0(\cdot)-\bar{u}^0(\cdot)}_{L^2(-R-rt_0+s(0),R+rt_0+s(0))}.
\end{align}
\item
Note that by Property (b) of $(\mathcal{H}1)$ or $(\mathcal{H}1)^*$, condition \eqref{gap_local_case} is equivalent to the existence of a $\tilde{\rho}>0$ such that for all $t\in[0,T)$ 
\begin{align}\label{gap_local_case}
r(t)>\tilde{\rho},
\end{align}
where $r(t)$ satisfies $S^i_{\bar{u}(s(t)-,t)}(r(t))=\bar{u}(s(t)+,t)$.
\end{itemize}
\end{remark}

\vspace{.07in}

The outline of the paper is as follows: in \Cref{hypotheses_on_system}, we give our hypotheses on the system. In \Cref{technical_lemmas}, we present technical lemmas. In \Cref{construction_of_the_shift}, we construct the shift with the additional entropy dissipation. Finally, in \Cref{proof_main_theorem} we prove the main theorem by using the  additional entropy dissipation from the shift to translate in $x$ the piecewise-smooth solution artificially.

\section{Hypotheses on the system}\label{hypotheses_on_system}
We will consider the following structural hypotheses $(\mathcal{H})$, $(\mathcal{H})^*$ on the system \eqref{system}, 

\eqref{entropy_condition_distributional_system_chitchat}  regarding the 1-shock and n-shock curves (they are closely related to hypotheses in \cite{Leger2011} and \cite{MR3519973}). For a fixed piecewise smooth solution $\bar{u}$ (as in the context of the main theorem \Cref{local_stability_systems}):
\hfill \break

\begin{itemize}
\item
$(\mathcal{H}1)$: (Family of 1-shocks verifying the Liu condition) There exists $r_0>0$ such that for all $u_L\in\{\bar{u}(s(t)-,t) | t\in[0,T)\}\coloneqq I_{-}$, and for all $u\in B_{r_0}(u_L)$, there is a 1-shock curve (issuing from $u$) $S_u^1\colon[0,s_u)\to \mathcal{V}$ (possibly $s_u=\infty$) parameterized by arc length. Moreover, $S_u^1(0)=u$ and the Rankine-Hugoniot jump condition holds:
\begin{align}
f(S_u^1(s))-f(u)=\sigma^1_u(s)(S_u^1(s)-u),
\end{align}
where $\sigma^1_u(s)$ is the velocity function. The map $u\mapsto s_u$ is Lipschitz on $\mathcal{V}$. Further, the maps $(s,u)\mapsto S_u^1(s)$ and $(s,u)\mapsto \sigma^1_u(s)$ are both $C^1$ on $\{(s,u)|s\in [0,s_u), u\in\mathcal{V}\}$, and the following conditions are satisfied: 
\begin{align*}
&\mbox{(a) (Liu entropy condition) } \frac{\mbox{d}}{\mbox{d}s} \sigma^1_u(s) <0,\hspace{.2in} \sigma^1_u(0)=\lambda_1(u), 
\\&\mbox{(b) (shock ``strengthens'' with $s$) }  \frac{\mbox{d}}{\mbox{d}s}\eta(u|S_u^1(s))>0, \hspace{.2in}\mbox{for all } s>0,
\\&\mbox{(c) (the shock curve cannot wrap tightly around itself)}
\\&\hspace{.5in}\mbox{For all $R>0$, there exists $\tilde{S}>0$ such that}  
\\
&\hspace{.8in}\Big\{S^1_{u}(s) \Big| s\in[0.s_u), \abs{u}\leq R \mbox{ and } \abs{S^1_u(s)}\leq R\Big\} \subseteq \Big\{S^1_u(s) \Big| \abs{u} \leq R \mbox{ and } s\leq \tilde{S}\Big\}.
\end{align*}

\item
$(\mathcal{H}2)$: If $(u_L,u_R)$ is an entropic Rankine-Hugoniot discontinuity with shock speed $\sigma$, then $\sigma> \lambda_1(u_R)$.

\item
$(\mathcal{H}3)$: If $(u_L,u_R)$ (with $u_L\in B_{r_0}(\tilde{u}_L)$, for $\tilde{u}_L\in I_{-}$) is an entropic Rankine-Hugoniot discontinuity with shock speed $\sigma$ verifying
\begin{align}
\sigma\leq\lambda_1(u_L),
\end{align}
then $u_R$ is in the image of $S_{u_L}^1$. In other words, there exists $s_{u_R}\in[0,s_{u_L})$ such that $S_{u_L}^1(s_{u_R})=u_R$ (and by implication, $\sigma=\sigma^1_{u_L}(s_{u_R})$).
\end{itemize}

Similarly, we will consider the following structural hypotheses $(\mathcal{H})^*$ on the system \eqref{system}, \eqref{entropy_condition_distributional_system_chitchat} regarding the n-shock curves:
\hfill \break

\begin{itemize}
\item
$(\mathcal{H}1)^*$: (Family of n-shocks verifying the Liu condition) There exists $r_0>0$ such that for all $u_R\in\{\bar{u}(s(t)+,t) | t\in[0,T)\}\coloneqq I_{+}$, and for all $u\in B_{r_0}(u_R)$, there is an n-shock curve (issuing from $u$) $S_u^n\colon[0,s_u)\to \mathcal{V}$ (possibly $s_u=\infty$) parameterized by arc length. Moreover, $S_u^n(0)=u$ and the Rankine-Hugoniot jump condition holds:
\begin{align}
f(S_u^n(s))-f(u)=\sigma^n_u(s)(S_u^n(s)-u),
\end{align}
where $\sigma^n_u(s)$ is the velocity function. The map $u\mapsto s_u$ is Lipschitz on $\mathcal{V}$. Further, the maps $(s,u)\mapsto S_u^n(s)$ and $(s,u)\mapsto \sigma^n_u(s)$ are both $C^1$ on $\{(s,u)|s\in [0,s_u), u\in\mathcal{V}\}$, and the following conditions are satisfied: 
\begin{align*}
&\mbox{(a) (Liu entropy condition) } \frac{\mbox{d}}{\mbox{d}s} \sigma^n_u(s) >0,\hspace{.2in} \sigma^n_u(0)=\lambda_n(u), 
\\&\mbox{(b) (shock ``strengthens'' with $s$) }  \frac{\mbox{d}}{\mbox{d}s}\eta(u|S_u^n(s))>0, \hspace{.2in}\mbox{for all } s>0,
\\&\mbox{(c) (the shock curve cannot wrap tightly around itself)}
\\&\hspace{.5in}\mbox{For all $R>0$, there exists $\tilde{S}>0$ such that}  
\\
&\hspace{.8in}\Big\{S^n_{u}(s) \Big| s\in[0.s_u), \abs{u}\leq R \mbox{ and } \abs{S^n_u(s)}\leq R\Big\} \subseteq \Big\{S^n_u(s) \Big| \abs{u} \leq R \mbox{ and } s\leq \tilde{S}\Big\}.
\end{align*}

\item
$(\mathcal{H}2)^*$: If $(u_R,u_L)$ is an entropic Rankine-Hugoniot discontinuity with shock speed $\sigma$, then $\sigma< \lambda_n(u_L)$.

\item
$(\mathcal{H}3)^*$: If $(u_R,u_L)$ (with $u_R\in B_{r_0}(\tilde{u}_R)$, for $\tilde{u}_R\in I_{+}$) is an entropic Rankine-Hugoniot discontinuity with shock speed $\sigma$ verifying
\begin{align}
\sigma\geq\lambda_n(u_R),
\end{align}
then $u_L$ is in the image of $S_{u_R}^n$. In other words, there exists $s_{u_L}\in[0,s_{u_R})$ such that $S_{u_R}^n(s_{u_L})=u_L$ (and by implication, $\sigma=\sigma^n_{u_R}(s_{u_L})$).
\end{itemize}

\begin{remark}
See \cite{Leger2011,MR3519973} for remarks on these hypotheses. We include them here for completeness. In particular, 
\hfill \break
\begin{itemize}
\item
Note that the system \eqref{system} verifies the hypotheses $(\mathcal{H}1)$-$(\mathcal{H}3)$ on the 1-shock family if and only if the system
\begin{align}
\begin{cases}
\partial_t u - \partial_x f(u)=G(u(\cdot,t))(x),\mbox{ for } x\in\mathbb{R},\mbox{ } t>0,\\
u(x,0)=u^0(x) \mbox{ for } x\in\mathbb{R}.
\end{cases}
\end{align}
verifies the properties $(\mathcal{H}1)^*$-$(\mathcal{H}3)^*$ for the n-shock family. It is in this way that $(\mathcal{H}1)$-$(\mathcal{H}3)$ are dual to $(\mathcal{H}1)^*$-$(\mathcal{H}3)^*$.
\item
On top of the Liu entropy condition (Property (a) in $(\mathcal{H}1)$), we also assume Property (b), which says that the 1-shock strength grows along the 1-shock curve $S^1_{u_L}$ when measured via the pseudo-distance of the relative entropy (recall that the map $(u,v)\mapsto\eta(u|v)$ measures  $L^2$-distance somehow -- see \eqref{entropy_relative_L2_control_system}). This growth condition arises naturally in the study of admissibility criteria for systems of conservation laws. In particular, Property (b) ensures that Liu admissible shocks are entropic for the entropy $\eta$  even for moderate-to-strong shocks (see \cite{MR1600904,MR0093653,MR2053765}). 

In \cite{MR3338447}, Barker, Freist\"{u}hler, and Zumbrun show that stability  and in particular contraction fails to hold for the full Euler system if we replace Property (b) with
\begin{align}
\frac{\mbox{d}}{\mbox{d}s}\eta(S_u^1(s))>0,\hspace{.2in} s>0.
\end{align}
This shows that it is better to measure shock strength using the relative entropy rather than the entropy itself.
\item
Recall the famous Lax E-condition for an  i-shock $(u_L,u_R,\sigma)$,
\begin{align}
\lambda_i(u_R)\leq\sigma\leq\lambda_i(u_L).
\end{align}
The hypothesis $(\mathcal{H}2)$ is implied by the first half of the Lax E-condition along with the hyperbolicity of the system \eqref{system}. In addition, we do not allow for right 1-contact discontinuities. 
\item
The hypothesis $(\mathcal{H}3)$ is a statement about the well-separation of the 1-shocks from all other Rankine-Hugoniot discontinuities entropic for $\eta$; the 1-shocks do not interfere with any other shocks. In particular, $(\mathcal{H}3)$ will hold for any strictly hyperbolic system in the form \eqref{system} if all Rankine-Hugoniot discontinuities $(u_L,u_R,\sigma)$ entropic for $\eta$ lie on an i-shock curve for some $i$ and the extended Lax admissibility condition holds:
\begin{align}\label{extended_lax_admissibility_condition}
\lambda_{i-1}(u_L) \leq \sigma \leq \lambda_{i+1} (u_R),
\end{align}
where $\lambda_0\coloneqq -\infty$ and $\lambda_{n+1}\coloneqq\infty$. Moreover, we only use the first inequality in \eqref{extended_lax_admissibility_condition} and the fact that $\lambda_1(u)\leq \lambda_{i-1}(u)$ for all $u\in\mathcal{V}$ and for all $i>1$.

Furthermore, note that for \emph{any} strictly hyperbolic system in the form \eqref{system}, if  $u_R$ and $u_L$ live in a fixed compact set, then there exists $\delta>0$ such that \eqref{extended_lax_admissibility_condition} will hold if $\abs{u_R-u_L}\leq\delta$. Similarly,  for any strictly hyperbolic system endowed with a strictly convex entropy, all Rankine-Hugoniot discontinuities $(u_L,u_R,\sigma)$ entropic for $\eta$ will locally be in the form $S^i_{u_L}(s)=u_R$ for some $s>0$, and where $S^i_{u_L}$ is the i-shock curve issuing  from $u_L$. See \cite[Theorem 1.1, p.~140]{lefloch_book} and more generally \cite[p.~140-6]{lefloch_book}. For the full Euler system , $(\mathcal{H}3)$ will hold regardless of the size of the shock $(u_L,u_R)$.
\item
Fix $B,\rho>0$. Then, for all $u\in\mathcal{V}$ with $\abs{u}\leq B$ and for all $s\in[\rho,B]$, we have
\begin{equation}
\begin{aligned}\label{s_shock_strength_comparable_part1}
(s-\rho)\inf_{{\substack{u\in\mathcal{V},\hspace{.02in}\abs{u}\leq B\\t\in[\rho,B]}}}\frac{\mbox{d}}{\mbox{d}t}\eta(u|S_u^1(t))&\leq\eta(u|S_u^1(s))\\
&=\int\limits_\rho^s \frac{\mbox{d}}{\mbox{d}t}\eta(u|S_u^1(t))\,\mbox{d}t \leq (s-\rho)\sup_{{\substack{u\in\mathcal{V},\hspace{.02in}\abs{u}\leq B\\t\in[\rho,B]}}}\frac{\mbox{d}}{\mbox{d}t}\eta(u|S_u^1(t)).
\end{aligned}
\end{equation}
Note that 
\begin{align}
0<\inf_{{\substack{u\in\mathcal{V},\hspace{.02in}\abs{u}\leq B\\t\in[\rho,B]}}}\frac{\mbox{d}}{\mbox{d}t}\eta(u|S_u^1(t))
\end{align}
and
\begin{align}
0<\sup_{{\substack{u\in\mathcal{V},\hspace{.02in}\abs{u}\leq B\\t\in[\rho,B]}}}\frac{\mbox{d}}{\mbox{d}t}\eta(u|S_u^1(t))<\infty
\end{align}
due to $(\mathcal{H}1)$.

Recall also that by hypothesis $(\mathcal{H}1)$, $S_u^1$ is parameterized by arc length. Thus, $\abs{S_u^1(s)-u}\leq B$ for all $s\in[0,B]$. We can then use \eqref{s_shock_strength_comparable_part1} and \Cref{entropy_relative_L2_control_system} to get,
\begin{align}\label{shock_strength_comparable_s_systems1}
(s-\rho)d_1\leq\abs{u-S_u^1(s)}^2\leq (s-\rho) d_2
\end{align}
for all $u\in\mathcal{V}$ with $\abs{u}\leq B$ and for all $s\in[\rho,B]$. The constants $d_1,d_2>0$ depend only on $B$ and $\rho$. This says that  $s-\rho$ is comparable to the  shock strength $\abs{u-S_u^1(s)}^2$.

\item
On the state space $\mathcal{V}$ where the strictly convex entropy $\eta$ is defined, the system \eqref{system} is hyperbolic. Further, by virtue of $f\in C^2(\mathcal{V})$, the eigenvalues of $\nabla f (u)$ vary continuously on the state space $\mathcal{V}$. Further, if the eigenvalue $\lambda_1(u)$ ($\lambda_n(u)$) is simple for $u\in\mathcal{V}$ (such as when the system \eqref{system} is strictly hyperbolic), the map $u\mapsto \lambda_1(u)$ ($u\mapsto \lambda_n(u)$) will be in $C^1(\mathcal{V})$ due to the implicit function theorem.
\end{itemize}
\end{remark}

We study solutions $u$ to \eqref{system} among the class of functions verifying a strong trace property (first introduced in \cite{Leger2011}):

\begin{definition}\label{strong_trace_definition}
Fix $T>0$. Let $u\colon\mathbb{R}\times[0,T)\to\mathbb{R}^n$ verify $u\in L^\infty(\mathbb{R}\times[0,T))$. We say $u$ has the \emph{strong trace property} if for every fixed Lipschitz continuous map $h\colon [0,T)\to\mathbb{R}$, there exists $u_+,u_-\colon[0,T)\to\mathbb{R}^n$ such that
\begin{align}
\lim_{n\to\infty}\int\limits_0^{t_0}\esssup_{y\in(0,\frac{1}{n})}\abs{u(h(t)+y,t)-u_+(t)}\,dt=\lim_{n\to\infty}\int\limits_0^{t_0}\esssup_{y\in(-\frac{1}{n},0)}\abs{u(h(t)+y,t)-u_-(t)}\,dt=0
\end{align}
for all $t_0\in(0,T)$.
\end{definition}

Note that for example a function $u\in L^\infty(\mathbb{R}\times[0,T))$ will satisfy the strong trace property if for each fixed $h$, the right and left limits
\begin{align}
\lim_{y\to0^{+}}u(h(t)+y,t)\hspace{.7in}\mbox{and}\hspace{.7in}\lim_{y\to0^{-}}u(h(t)+y,t)
\end{align}
exist for almost every $t$. In particular, a function $u\in L^\infty(\mathbb{R}\times[0,T))$ will have strong traces according to \Cref{strong_trace_definition} if $u$ has a representative which is in $BV_{\text{loc}}$. However, the strong trace property is weaker than $BV_{\text{loc}}$.

\section{Technical Lemmas}\label{technical_lemmas}
Throughout this paper, we use the following definition for the relative flux
\begin{align}\label{Z_def}
f(a|b)\coloneqq f(a)-f(b)-\nabla f (b)(a-b),
\end{align}
and the relative $\nabla\eta$: for $a,b\in\mathbb{M}^{n\times 1}$,
\begin{align}
\nabla\eta(a|b)\coloneqq \nabla\eta(a)-\nabla\eta(b)-[a-b]^T\nabla^2\eta(b).
\end{align}.

The following lemma from \cite{MR3537479} describes how the relative entropy obeys a sort of triangle inequality:
\begin{lemma}[Structural lemma from \cite{MR3537479} - triangle inequality for the relative entropy]\label{triangle_inequality_systems_entropy}
For any $u,v,w\in\mathcal{V}$, we have
\begin{align}
\eta(u|w)+\eta(w|v)=\eta(u|v)+(\nabla\eta(w)-\nabla\eta(v))\cdot(w-u),
\end{align}
and
\begin{align}
q(u;w)+q(w;v)=q(u;v)+(\nabla\eta(w)-\nabla\eta(v))\cdot(f(w)-f(u)).
\end{align}

Thus, for any $\sigma\in\mathbb{R}$,
\begin{equation}
\begin{aligned}\label{48_kang_vasseur_a_contraction}
q(u;v)-\sigma\eta(u|v)=&(q(u;w)-\sigma\eta(u;w))+(q(w;v)-\sigma\eta(w|v))
\\
&-(\nabla\eta(w)-\nabla\eta(v))\cdot(f(w)-f(u)-\sigma(w-u)).
\end{aligned}
\end{equation}
\end{lemma}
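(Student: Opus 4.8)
\textbf{Proof proposal for Lemma~\ref{triangle_inequality_systems_entropy}.}

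The plan is to prove the three identities by direct expansion, using only the definitions of $\eta(\cdot|\cdot)$, $q(\cdot;\cdot)$, and the compatibility relation. First I would establish the entropy identity
\[
\eta(u|w)+\eta(w|v)=\eta(u|v)+(\nabla\eta(w)-\nabla\eta(v))\cdot(w-u).
\]
Writing out $\eta(u|w)=\eta(u)-\eta(w)-\nabla\eta(w)\cdot(u-w)$ and $\eta(w|v)=\eta(w)-\eta(v)-\nabla\eta(v)\cdot(w-v)$, the sum telescopes to $\eta(u)-\eta(v)-\nabla\eta(w)\cdot(u-w)-\nabla\eta(v)\cdot(w-v)$. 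The target is $\eta(u|v)+(\nabla\eta(w)-\nabla\eta(v))\cdot(w-u) = \eta(u)-\eta(v)-\nabla\eta(v)\cdot(u-v)+(\nabla\eta(w)-\nabla\eta(v))\cdot(w-u)$. Subtracting, everything reduces to the purely algebraic claim that $-\nabla\eta(w)\cdot(u-w)-\nabla\eta(v)\cdot(w-v)$ equals $-\nabla\eta(v)\cdot(u-v)+(\nabla\eta(w)-\nabla\eta(v))\cdot(w-u)$, which one checks by collecting the $\nabla\eta(w)$ and $\nabla\eta(v)$ terms separately; each side matches. This step is entirely mechanical.

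Next I would prove the entropy-flux identity
\[
q(u;w)+q(w;v)=q(u;v)+(\nabla\eta(w)-\nabla\eta(v))\cdot(f(w)-f(u))
\]
by exactly the same bookkeeping, with $q$ in place of $\eta$ and $f$ in place of the identity: expand $q(u;w)=q(u)-q(w)-\nabla\eta(w)\cdot(f(u)-f(w))$ and $q(w;v)=q(w)-q(v)-\nabla\eta(v)\cdot(f(w)-f(v))$, sum, and compare with $q(u;v)=q(u)-q(v)-\nabla\eta(v)\cdot(f(u)-f(v))$; the scalar products rearrange in precisely the same pattern as before. Finally, \eqref{48_kang_vasseur_a_contraction} follows by forming $q(u;v)-\sigma\eta(u|v)$, substituting the two identities just proved to rewrite $q(u;v)$ and $\eta(u|v)$, and gathering: the combination $-(\nabla\eta(w)-\nabla\eta(v))\cdot(f(w)-f(u))+\sigma(\nabla\eta(w)-\nabla\eta(v))\cdot(w-u)$ becomes $-(\nabla\eta(w)-\nabla\eta(v))\cdot\bigl(f(w)-f(u)-\sigma(w-u)\bigr)$, which is the stated error term.

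There is essentially no obstacle here: the lemma is a linear-algebraic rearrangement, and the strict convexity of $\eta$ and the smoothness hypotheses are not even used (only that $\nabla\eta$, $q$, $f$ are well-defined pointwise on $\mathcal{V}$). The only point requiring a modicum of care is keeping the sign conventions straight in the definition of $q(\cdot;\cdot)$, which uses $\nabla\eta$ paired with the flux difference rather than the state difference; once that is respected, the computation for $q$ is formally identical to the one for $\eta$. I would present it as a short explicit expansion rather than invoking any further structure.
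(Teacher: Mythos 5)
Your proposal is correct and matches the paper's approach: the paper itself states that the lemma "follows immediately from the definition of $q(\,\cdot\,;\,\cdot\,)$ and $\eta(\,\cdot\,|\,\cdot\,)$," deferring to \cite{MR3519973} for the explicit expansion, which is precisely the mechanical computation you carry out. The telescoping cancellations in both the $\eta$ and $q$ identities check out, and \eqref{48_kang_vasseur_a_contraction} indeed follows by substituting the two identities and regrouping the error terms.
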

The proof of \Cref{triangle_inequality_systems_entropy} follows immediately from the definition of $q(\hspace{.015in}\cdot\hspace{.015in};\hspace{.015in}\cdot\hspace{.015in})$ and $\eta(\hspace{.015in}\cdot\hspace{.015in}|\hspace{.015in}\cdot\hspace{.015in})$. In particular, see \cite[p.~360-1]{MR3519973} for a simple proof.

\begin{lemma}\label{a_cond_lemma_itself}
Fix $B>0$. Then there exists a constant $C>0$ depending on $B$ such that the following holds:
 
If $u_L,u_R\in\mathcal{V}$ with $\abs{u_L},\abs{u_R}\leq B$, then whenever $\alpha,\theta\in(0,1)$ verify 
\begin{align}
\label{cond_a}
\alpha<\frac{\theta^2}{C},
\end{align}
then $R_a\coloneqq\{u | \eta(u|u_L)\leq a\eta(u|u_R)\}\subset B_{\theta}(u_L)$ for all $0<a<\alpha$. 
\end{lemma}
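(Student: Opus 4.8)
The plan is to compare the two quadratic forms $\eta(\cdot|u_L)$ and $\eta(\cdot|u_R)$ using the two-sided bounds from Lemma \ref{entropy_relative_L2_control_system} on a suitable compact set, together with the triangle-inequality structure of Lemma \ref{triangle_inequality_systems_entropy}. First I would fix a compact set $V\subset\mathcal{V}$ large enough to contain $\overline{B_1(u_L)}$ for every admissible $u_L$ (i.e. every $u_L$ with $|u_L|\le B$ lying in $\mathcal{V}$), and extract from Lemma \ref{entropy_relative_L2_control_system} constants $c^*,c^{**}>0$ valid on $V$. The key inequality to establish is that for $u\in V$,
\begin{align*}
\eta(u|u_R)\ge \tfrac{1}{2}c^*|u-u_L|^2 - C_1|u_L-u_R|^2
\end{align*}
for a constant $C_1$ depending only on $B$ and bounds on $\nabla^2\eta$ over $V$. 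This follows from the identity in Lemma \ref{triangle_inequality_systems_entropy} with $w=u_L$, $v=u_R$: writing $\eta(u|u_R)=\eta(u|u_L)+\eta(u_L|u_R)+(\nabla\eta(u_L)-\nabla\eta(u_R))\cdot(u_L-u)$, one bounds the cross term by Cauchy--Schwarz and the mean value theorem, $|\nabla\eta(u_L)-\nabla\eta(u_R)|\le L|u_L-u_R|$ with $L=\sup_V|\nabla^2\eta|$, and absorbs it with Young's inequality, $L|u_L-u_R||u_L-u|\le \tfrac12 c^*|u-u_L|^2 + \tfrac{L^2}{2c^*}|u_L-u_R|^2$, while $\eta(u_L|u_R)\ge 0$ by convexity.

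Next I would assume $u\in R_a$ with $0<a<\alpha$; then $\eta(u|u_L)\le a\,\eta(u|u_R)\le \eta(u|u_R)$. Using the upper bound $\eta(u|u_L)\le c^{**}|u-u_L|^2$ is the wrong direction, so instead I use the lower bound $\eta(u|u_L)\ge c^*|u-u_L|^2$ and the inequality above to get
\begin{align*}
c^*|u-u_L|^2 \le \eta(u|u_L)\le a\,\eta(u|u_R).
\end{align*}
Here I need an upper bound on $\eta(u|u_R)$ in terms of $|u-u_L|^2$; by $\eta(u|u_R)\le c^{**}|u-u_R|^2\le 2c^{**}(|u-u_L|^2+|u_L-u_R|^2)$ and the crude bound $|u_L-u_R|\le 2B$, one obtains $\eta(u|u_R)\le 2c^{**}|u-u_L|^2 + C_2$ with $C_2$ depending on $B$. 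Feeding this back, $c^*|u-u_L|^2\le 2ac^{**}|u-u_L|^2 + aC_2$, so for $a$ small (say $a<c^*/(4c^{**})$, which \eqref{cond_a} guarantees once $C$ is chosen large) we get $|u-u_L|^2 \le \frac{2aC_2}{c^*}$. Choosing $C$ in \eqref{cond_a} so large that $\alpha<\theta^2 c^*/(2C_2)$ (and $\alpha<c^*/(4c^{**})$) forces $|u-u_L|^2<\theta^2$, i.e. $u\in B_\theta(u_L)$.

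The one subtlety — and the place I'd be most careful — is the dependence of the set $V$ (hence of $c^*,c^{**}$) on $u_L$: a priori $u_L$ ranges over a $B$-ball, and $R_a$ could contain points far from $u_L$ before we know it is small, so the local nature of Lemma \ref{entropy_relative_L2_control_system} must be handled. The clean fix is to run the argument in two stages: first observe that $\eta(u|u_L)\le a\,\eta(u|u_R)$ together with strict convexity already forces $u$ into a fixed bounded neighborhood of $\{|u_L|\le B\}$ once $a$ is bounded (since otherwise $\eta(u|u_L)$ grows superlinearly in $|u|$ while $\eta(u|u_R)$ is controlled by a comparable quantity), so we may take $V$ to be a single compact set independent of the particular $u_L$; then apply the quantitative estimate above on that fixed $V$. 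Everything else is a routine application of Taylor's theorem and Young's inequality, and the constant $C$ is read off from the chain of inequalities; since $\alpha<\theta^2/C$ with $\theta\in(0,1)$ automatically gives $\alpha<1/C$, all the auxiliary smallness requirements on $a$ are subsumed by \eqref{cond_a} for $C$ large enough.
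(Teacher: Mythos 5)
Your quantitative chain in the middle paragraph --- $c^*|u-u_L|^2\le\eta(u|u_L)\le a\eta(u|u_R)\le a\bigl(2c^{**}|u-u_L|^2+C_2\bigr)$, then absorbing the first term on the right for $a$ small --- is sound \emph{provided} you already know that every $u\in R_a$ lies in one fixed compact set $V$ on which the constants $c^*,c^{**}$ of Lemma~\ref{entropy_relative_L2_control_system} apply uniformly. That a priori containment is exactly the ``subtlety'' you flag at the end, and the justification you offer does not close the gap: strict convexity of a $C^3$ function on a convex open set does not imply superlinear growth (consider $\eta(u)=\sqrt{1+|u|^2}$, which is strictly convex with $|\nabla\eta|<1$). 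After cancelling the $\eta(u)$ terms, the defining inequality of $R_a$ reads $(1-a)\eta(u)\le$ an affine function of $u$, and whether this bounds $|u|$ at all, let alone uniformly over $u_L,u_R,a$, depends on whether the slope of that affine function lies in the interior of $\nabla\eta(\mathcal{V})$ --- which the hypotheses do not control. So some other mechanism has to keep $R_a$ from escaping to infinity before the estimate can be invoked.

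The paper supplies that mechanism through convexity of $R_a$. The same affine rewriting shows $R_a$ is a sublevel set of a convex function, hence convex and in particular connected. The quantitative estimate is then run only on $R_a\cap B_\theta(u_L)$; since $\theta<1$ and $|u_L|\le B$, that intersection lies in the fixed compact ball $\overline{B_{B+1}(0)}$, so $c^*$ is legitimately available and one obtains $|u-u_L|^2\le Ca<\theta^2/2$ there. Thus $R_a\cap B_\theta(u_L)$ is contained in $B_{\theta/\sqrt{2}}(u_L)$, hence is simultaneously relatively open and relatively closed in $R_a$, and it is nonempty (it contains $u_L$); connectedness forces $R_a=R_a\cap B_\theta(u_L)\subset B_\theta(u_L)$. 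Your estimates slot into this scheme without change, but the convexity/connectedness bootstrap is the step you are missing, and it is not optional. A small side remark: the lower bound on $\eta(u|u_R)$ you derive via Lemma~\ref{triangle_inequality_systems_entropy} is never used in your main chain, which only needs the crude upper bound $\eta(u|u_R)\le 2c^{**}|u-u_L|^2+C_2$.
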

\begin{remark}
The set $R_a$ is compact.
\end{remark}
The proof of \Cref{a_cond_lemma_itself} is found in the proof of Lemma 4.3 in \cite{MR3519973}. We repeat the proof in \Cref{appendix_a_cond_lemma_itself} for the reader's convenience.

\hfill

The following Lemma gives the entropy dissipation caused by changing the domain of integration, translating the solution $\bar{u}$ in $x$ (by a function $X(t)$), and from the source term $G$.

\begin{lemma}[Local entropy dissipation rate]\label{local_entropy_dissipation_rate_systems}
Fix $T>0$. Let $u,\bar{u}\in L^\infty(\mathbb{R}\times[0,T))$ be weak solutions to \eqref{system}.  Assume that $u$ and $\bar{u}$ are entropic for the entropy $\eta$. Assume that $\bar{u}$ is Lipschitz continuous on $\{(x,t)\in\mathbb{R}\times[0,T) | x<s(t)\}$ and on $\{(x,t)\in\mathbb{R}\times[0,T) | x>s(t)\}$, where $s:[0,T)\to\mathbb{R}$ is a Lipschitz function . Assume also that $u$ verifies the strong trace property (\Cref{strong_trace_definition}).  Let $h_1,h_2, X:[0,T)\to\mathbb{R}$ be Lipschitz continuous functions with the property that there exists $\delta>0$ such that $h_2(t)-h_1(t)\geq \delta$ for all $t\in[0,T)$. Assume also that for all $t\in[0,T)$, $s(t)-X(t)$ is not in the open set $(h_1(t),h_2(t))$.

Then,
\begin{equation}
\begin{aligned}\label{local_compatible_dissipation_calc}
&\int\limits_{0}^{t_0} \bigg[q(u(h_1(t)+,t);\bar{u}((h_1(t)+X(t))+,t))-q(u(h_2(t)-,t);\bar{u}((h_2(t)+X(t))-,t))
\\
&\hspace{.7in}+\dot{h}_2(t)\eta(u(h_2(t)-,t)|\bar{u}((h_2(t)+X(t))-,t))
\\
&\hspace{.7in}-\dot{h}_1(t)\eta(u(h_1(t)+,t)|\bar{u}((h_1(t)+X(t))+,t))\bigg]\,dt
\\
&\hspace{.5in}\geq
\int\limits_{h_1(t_0)}^{h_2(t_0)}\eta(u(x,t_0)|\bar{u}(x+X(t_0),t_0))\,dx
-\int\limits_{h_1(0)}^{h_2(0)}\eta(u^0(x)|\bar{u}^0(x))\,dx
\\
&\hspace{.7in}+\int\limits_{0}^{t_0}\int\limits_{h_1(t)}^{h_2(t)}\Bigg(\partial_x \bigg|_{(x+X(t),t)}\hspace{-.45in} \nabla\eta(\bar{u}(x,t))\Bigg)f(u(x,t)|\bar{u}(x+X(t),t))
\\
&\hspace{.7in}+\Bigg(2\partial_x\bigg|_{(x+X(t),t)}\hspace{-.45in}\bar{u}^T(x,t)\dot{X}(t)\Bigg)\nabla^2\eta(\bar{u}(x+X(t),t))[u(x,t)-\bar{u}(x+X(t),t)]
\\
&\hspace{.7in}-\nabla\eta(u(x,t)|\bar{u}(x+X(t),t))G(u(\cdot,t))(x)
\\
&+
\Bigg(G(\bar{u}(\cdot,t))(x+X(t))-G(u(\cdot,t))(x)\Bigg)^T\nabla^2\eta(\bar{u}(x+X(t),t))[u(x,t)-\bar{u}(x+X(t),t)]\,dxdt.
\end{aligned}
\end{equation}
\end{lemma}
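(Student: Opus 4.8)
The plan is to derive \eqref{local_compatible_dissipation_calc} by combining the entropy inequality for $u$ with the equation and entropy balance for $\bar u$, after a change of variables that aligns $\bar u$ with the shift. First I would fix $t_0\in(0,T)$ and work on the (moving, curved) region $\Omega\coloneqq\{(x,t)\mid 0<t<t_0,\ h_1(t)<x<h_2(t)\}$. Since $s(t)-X(t)\notin(h_1(t),h_2(t))$, the translated solution $\bar u(\cdot+X(t),t)$ is Lipschitz on $\Omega$ (the discontinuity of $\bar u$ stays on the boundary or outside), so on $\Omega$ the function $\bar u(x+X(t),t)$ is a genuine Lipschitz function of $(x,t)$ whose derivatives satisfy the chain rule in terms of the one-sided derivatives of $\bar u$. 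The key object is $\eta(u(x,t)\,|\,\bar u(x+X(t),t))$; I would compute its distributional time derivative and its flux, integrate over $\Omega$, and apply the divergence theorem, using the strong trace property of $u$ (Definition~\ref{strong_trace_definition}) to make sense of the traces of $u$ along the Lipschitz curves $x=h_1(t)$ and $x=h_2(t)$.

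The main computational step is the pointwise identity for $\partial_t\eta(u|\bar u_X)+\partial_x q(u;\bar u_X)$, where $\bar u_X(x,t)\coloneqq\bar u(x+X(t),t)$. Expanding $\eta(u|\bar u_X)=\eta(u)-\eta(\bar u_X)-\nabla\eta(\bar u_X)\cdot(u-\bar u_X)$ and differentiating, the terms involving only $u$ give, via the entropy inequality \eqref{entropy_condition_distributional_system_chitchat}, a contribution $\leq \nabla\eta(u)G(u)$; the terms involving only $\bar u_X$ are handled using that $\bar u$ solves \eqref{system} and is entropic, together with the extra $\dot X(t)\,\partial_x\bar u$ term coming from the translation; and the cross terms $-\nabla\eta(\bar u_X)\cdot\partial_t(u-\bar u_X)$, $-\partial_x[\nabla\eta(\bar u_X)\cdot(f(u)-f(\bar u_X))]$, etc., are reorganized using the compatibility relation \eqref{compatibility_relation_eta_system} and the definitions of $f(a|b)$ and $\nabla\eta(a|b)$ from \Cref{technical_lemmas}. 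This is the standard relative-entropy computation of Dafermos--DiPerna as reorganized by Vasseur, now carried out with the translation $X(t)$ present; the net outcome is that, in the sense of distributions on $\Omega$,
\begin{align*}
\partial_t\eta(u|\bar u_X)+\partial_x q(u;\bar u_X)
&\leq -\Big(\partial_x\big|_{(x+X,t)}\nabla\eta(\bar u)\Big)f(u|\bar u_X)
- \Big(2\,\partial_x\big|_{(x+X,t)}\bar u^T\,\dot X\Big)\nabla^2\eta(\bar u_X)(u-\bar u_X)\\
&\quad + \nabla\eta(u|\bar u_X)G(u(\cdot,t))(x)
- \big(G(\bar u(\cdot,t))(x+X)-G(u(\cdot,t))(x)\big)^T\nabla^2\eta(\bar u_X)(u-\bar u_X).
\end{align*}

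Once this pointwise inequality is in hand, I would integrate it over $\Omega$ against the constant test weight $1$ (justified by a standard approximation/mollification argument, since $u$ only has traces, not pointwise values). The divergence theorem over the region bounded by $t=0$, $t=t_0$, $x=h_1(t)$, $x=h_2(t)$ produces exactly the left-hand side of \eqref{local_compatible_dissipation_calc}: the boundary term on $t=t_0$ gives $\int_{h_1(t_0)}^{h_2(t_0)}\eta(u|\bar u_X)\,dx$, the term on $t=0$ gives $-\int_{h_1(0)}^{h_2(0)}\eta(u^0|\bar u^0)\,dx$ (here $X(0)=0$), and the lateral boundaries $x=h_i(t)$ contribute the flux-minus-speed$\times$entropy terms $q(u(h_i+,t);\bar u_X(h_i+,t))-\dot h_i(t)\eta(u(h_i+,t)|\bar u_X(h_i+,t))$ with the appropriate signs and one-sided traces; rearranging gives \eqref{local_compatible_dissipation_calc}. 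The step I expect to be the main obstacle is the rigorous justification of the divergence theorem along the two Lipschitz lateral boundaries with only a strong-trace (not $BV$) solution $u$: one must approximate the curved space-time region by simpler ones, mollify $u$, control the error terms using \Cref{strong_trace_definition}, and pass to the limit — and one must be careful that the orientation of the traces ($h_1+$ vs.\ $h_2-$) is consistent with the hypothesis that $s(t)-X(t)$ lies outside $(h_1(t),h_2(t))$, so that no hidden interaction with the shock of $\bar u$ occurs inside $\Omega$. The algebraic bookkeeping in the pointwise identity is lengthy but routine; the analytic care at the moving boundaries is where the real work lies, and it follows the template established in \cite{Leger2011,MR3519973}.
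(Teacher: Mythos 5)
Your proposal follows the same route as the paper: derive the relative-entropy inequality for $\eta(u|\bar u(\cdot+X(t),t))$ via the Dafermos--DiPerna computation adapted to the shift (the paper's Step~1, culminating in \eqref{combined1}), then apply the divergence theorem over the region between $h_1$ and $h_2$ by choosing a cutoff test function of the form $\omega(t)\chi(x,t)$ and letting $\epsilon\to 0$, using the strong trace property to identify the lateral boundary terms (the paper's Step~2). The signs and the placement of the hypothesis $s(t)-X(t)\notin(h_1(t),h_2(t))$ match the paper's argument, so this is essentially the same proof.
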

\begin{proof}
This proof is based on a similar argument in \cite{scalar_move_entire_solution}.

\hfill\break
\uline{Step 1}
\hfill\break
We show that for all positive, Lipschitz continuous test functions $\phi:\mathbb{R}\times[0,T)\to\mathbb{R}$ with compact support and that vanish on the set $\{(x,t)\in\mathbb{R}\times[0,T) | x=s(t)-X(t)\}$, we have
\begin{equation}
\begin{aligned}\label{combined1}
\int\limits_{0}^{T} \int\limits_{-\infty}^{\infty} [\partial_t \phi \eta(u(x,t)|\bar{u}(x+X(t),t))+\partial_x \phi q(u(x,t);\bar{u}(x+X(t),t))]\,dxdt\hspace{1.5in} \\
+  \int\limits_{-\infty}^{\infty}\phi(x,0)\eta(u^0(x)|\bar{u}^0(x))\,dx\hspace{3in} \\
\geq
\int\limits_{0}^{T} \int\limits_{-\infty}^{\infty}\phi\Bigg[\Bigg(
\partial_x \bigg|_{(x+X(t),t)} \hspace{-.45in}\nabla\eta(\bar{u}(x,t))\Bigg)f(u(x,t)|\bar{u}(x+X(t),t))
\\
+\Bigg(2\partial_x\bigg|_{(x+X(t),t)}\hspace{-.45in}\bar{u}^T(x,t)\dot{X}(t)\Bigg)\nabla^2\eta(\bar{u}(x+X(t),t))[u(x,t)-\bar{u}(x+X(t),t)]
\\
-\nabla\eta(u(x,t)|\bar{u}(x+X(t),t))G(u(\cdot,t))(x)
\\
+
\Bigg(G(\bar{u}(\cdot,t))(x+X(t))-G(u(\cdot,t))(x)\Bigg)^T\nabla^2\eta(\bar{u}(x+X(t),t))[u(x,t)-\bar{u}(x+X(t),t)]
\Bigg]\,dxdt.
\end{aligned}
\end{equation}

Note that \eqref{combined1} is the analogue in our case of the key estimate used in Dafermos's proof of weak-strong stability, which gives a relative version of the entropy inequality (see equation (5.2.10) in \cite[p.~122-5]{dafermos_big_book}). The proof of \Cref{combined1} is based on the famous weak-strong stability proof of Dafermos and DiPerna \cite[p.~122-5]{dafermos_big_book}. To take into account the entropy production due to translating the solution $\bar{u}$ by the function $X$, we use the argument introduced in \cite{scalar_move_entire_solution}.

%

Note that on the complement of the set $\{(x,t)\in\mathbb{R}\times[0,T) | x=s(t)\}$, $\bar{u}$ is smooth  and so we have the exact equalities,
\begin{align}
\partial_t\bigg|_{(x,t)}\hspace{-.21in}\big(\bar{u}(x,t)\big)+\partial_x\bigg|_{(x,t)}\hspace{-.21in}\big(f(\bar{u}(x,t))\big)&=G(\bar{u}(\cdot,t))(x),\label{solves_equation}\\
\partial_t\bigg|_{(x,t)}\hspace{-.21in}\big(\eta(\bar{u}(x,t))\big)+\partial_x\bigg|_{(x,t)}\hspace{-.21in}\big(q(\bar{u}(x,t))\big)&=\nabla\eta(\bar{u}(x,t))G(\bar{u}(\cdot,t))(x).\label{solves_entropy}
\end{align}

Thus for any Lipschitz continuous function $X: [0,T)\to\mathbb{R}$ with $X(0)=0$  we have on the complement of the set $\{(x,t)\in\mathbb{R}\times[0,T) | x=s(t)-X(t)\}$, 
\begin{equation}
\begin{aligned}\label{solves_equation_shift}
\partial_t\bigg|_{(x,t)}\hspace{-.21in}&\big(\bar{u}(x+X(t),t)\big)+\partial_x\bigg|_{(x,t)}\hspace{-.21in}\big(f(\bar{u}(x+X(t),t))\big)=
\\
&\hspace{1.5in}\Bigg(\partial_x\bigg|_{(x+X(t),t)}\hspace{-.45in}\big(\bar{u}(x,t)\big)\Bigg)\dot{X}(t)+G(\bar{u}(\cdot,t))(x+X(t)),
\end{aligned}
\end{equation}
and
\begin{equation}
\begin{aligned}\label{solves_entropy_shift}
\partial_t\bigg|_{(x,t)}\hspace{-.21in}&\big(\eta(\bar{u}(x+X(t),t))\big)+\partial_x\bigg|_{(x,t)}\hspace{-.21in}\big(q(\bar{u}(x+X(t),t))\big)=
\\
&\nabla\eta(\bar{u}(x+X(t),t))\Bigg(\partial_x\bigg|_{(x+X(t),t)}\hspace{-.45in}\big(\bar{u}(x,t)\big)\Bigg)\dot{X}(t)+\nabla\eta(\bar{u}(x+X(t),t))G(\bar{u}(\cdot,t))(x+X(t)).
\end{aligned}
\end{equation}

We can now imitate the weak-strong stability proof in \cite[p.~122-5]{dafermos_big_book}, using \eqref{solves_equation_shift} and \eqref{solves_entropy_shift} instead of \eqref{solves_equation} and \eqref{solves_entropy}.

Recall \eqref{Z_def}, which says

\begin{align}
f(u|\bar{u})\coloneqq f(u)-f(\bar{u})-\nabla f (\bar{u})(u-\bar{u}).
\end{align}
Remark that $f(u|\bar{u})$ is locally quadratic in $u-\bar{u}$. 

Fix any positive, Lipschitz continuous test function $\phi:\mathbb{R}\times[0,T)\to\mathbb{R}$ with compact support. Assume also that $\phi$ vanishes on the set $\{(x,t)\in\mathbb{R}\times[0,T) | x=s(t)-X(t)\}$. Then, we use that $u$ satisfies the entropy inequality in a distributional sense:
 \begin{equation}
\begin{aligned}\label{u_entropy_integral_formulation}
\int\limits_{0}^{T} \int\limits_{-\infty}^{\infty}\Bigg[\partial_t\phi\big(\eta(u(x,t))\big)+&\partial_x \phi \big(q(u(x,t))\big)\Bigg]\,dxdt+ \int\limits_{-\infty}^{\infty}\phi(x,0)\eta(u^0(x))\,dx
\\
&\geq-\int\limits_{0}^{T} \int\limits_{-\infty}^{\infty}\phi\nabla\eta(u(x,t))G(u(\cdot,t))(x)\,dxdt.
\end{aligned}
\end{equation}

 We also view \eqref{solves_entropy_shift} as a distributional equality:
 \begin{equation}
\begin{aligned}\label{solves_entropy_shift_integral_formulation}
\int\limits_{0}^{T} \int\limits_{-\infty}^{\infty}\Bigg[\partial_t\phi\big(\eta(\bar{u}(x+&X(t),t))\big)+\partial_x \phi \big(q(\bar{u}(x+X(t),t))\big)\Bigg]\,dxdt+ \int\limits_{-\infty}^{\infty}\phi(x,0)\eta(\bar{u}^0(x))\,dx
\\
&=-\int\limits_{0}^{T} \int\limits_{-\infty}^{\infty}\phi\Bigg[\nabla\eta(\bar{u}(x+X(t),t))\Bigg(\partial_x\bigg|_{(x+X(t),t)}\hspace{-.45in}\big(\bar{u}(x,t)\big)\Bigg)\dot{X}(t)
\\
&\hspace{.5in}+\nabla\eta(\bar{u}(x+X(t),t))G(\bar{u}(\cdot,t))(x+X(t))\Bigg]\,dxdt.
\end{aligned}
\end{equation}

To get \eqref{solves_entropy_shift_integral_formulation}, we do integration by parts twice on the right hand side of \eqref{solves_entropy_shift}. Once on the domain $\{(x,t)\in\mathbb{R}\times[0,T) | x<s(t)-X(t)\}$ and once on the domain $\{(x,t)\in\mathbb{R}\times[0,T) | x>s(t)-X(t)\}$. We don't have a boundary term along the set $\{(x,t)\in\mathbb{R}\times[0,T) | x=s(t)-X(t)\}$ because $\phi$ vanishes on this set.

 We subtract \eqref{solves_entropy_shift_integral_formulation} from \eqref{u_entropy_integral_formulation}, to get

\begin{equation}
\begin{aligned}\label{difference_entropy_equations}
\int\limits_{0}^{T} \int\limits_{-\infty}^{\infty} [\partial_t \phi \eta(u(x,t)|\bar{u}(x+X(t),t))+\partial_x \phi q(u(x,t),\bar{u}(x+X(t),t))]\,dxdt \hspace{.5in}\\
+  \int\limits_{-\infty}^{\infty}\phi(x,0)\eta(u^0(x)|\bar{u}^0(x))\,dx \hspace{2.5in}\\
\geq -\int\limits_{0}^{T} \int\limits_{-\infty}^{\infty} \Big(\partial_t \phi\nabla\eta(\bar{u}(x+X(t),t))[u(x,t)-\bar{u}(x+X(t),t)]
\\
+\partial_x \phi \nabla\eta(\bar{u}(x+X(t),t))[f(u(x,t))-f(\bar{u}(x+X(t),t))]\Big)\,dxdt 
\\
- \int\limits_{-\infty}^{\infty}\phi(x,0)\nabla\eta(\bar{u}^0(x))[u^0(x)-\bar{u}^0(x)]\,dx
\\
+
\int\limits_{0}^{T} \int\limits_{-\infty}^{\infty}\phi\Bigg[\nabla\eta(\bar{u}(x+X(t),t))\Bigg(\partial_x\bigg|_{(x+X(t),t)}\hspace{-.45in}\big(\bar{u}(x,t)\big)\Bigg)\dot{X}(t)
\\
+\nabla\eta(\bar{u}(x+X(t),t))G(\bar{u}(\cdot,t))(x+X(t))-\nabla\eta(u(x,t))G(u(\cdot,t))(x)\Bigg]\,dxdt.
\end{aligned}
\end{equation}

The function $u$ is a distributional solution to the system of conservation laws. Thus, for every Lipschitz continuous test function $\Phi:\mathbb{R}\times[0,T)\to \mathbb{M}^{1\times n}$ with compact support,
\begin{equation}
\begin{aligned}\label{u_solves_equation_integral_formulation}
\int\limits_{0}^{T} \int\limits_{-\infty}^{\infty} \Bigg[\partial_t\Phi u + \partial_x\Phi f(u) \Bigg]\,dxdt +\int\limits_{-\infty}^{\infty} \Phi(x,0)u^0(x)\,dx
\\
=-\int\limits_{0}^{T} \int\limits_{-\infty}^{\infty}\Phi G(u(\cdot,t))(x)\,dxdt.
\end{aligned}
\end{equation}

We also can rewrite \eqref{solves_equation_shift} in a distributional way, for $\Phi$ which have the additional property of vanishing on $\{(x,t)\in\mathbb{R}\times[0,T) | x=s(t)-X(t)\}$:
\begin{equation}
\begin{aligned}\label{shift_solves_equation_integral_formulation}
\int\limits_{0}^{T} \int\limits_{-\infty}^{\infty} \Bigg[\partial_t\Phi \bar{u}(x+X(t),t) + \partial_x\Phi f(\bar{u}(x+X(t),t)) \Bigg]\,dxdt +\int\limits_{-\infty}^{\infty} \Phi(x,0)\bar{u}^0(x)\,dx
\\
=-\int\limits_{0}^{T} \int\limits_{-\infty}^{\infty}\Phi \Bigg[\Bigg(\partial_x\bigg|_{(x+X(t),t)}\hspace{-.45in}\big(\bar{u}(x,t)\big)\Bigg)\dot{X}(t)+G(\bar{u}(\cdot,t))(x+X(t))\Bigg]\,dxdt.
\end{aligned}
\end{equation}
To prove \eqref{shift_solves_equation_integral_formulation}, on the right hand side of \eqref{solves_equation_shift} we again do integration by parts twice. Once on the domain $\{(x,t)\in\mathbb{R}\times[0,T) | x<s(t)-X(t)\}$ and once on the domain $\{(x,t)\in\mathbb{R}\times[0,T) | x>s(t)-X(t)\}$. We lose the boundary terms along $\{(x,t)\in\mathbb{R}\times[0,T) | x=s(t)-X(t)\}$ because $\Phi$ vanishes there. 

Then, we can choose 
\begin{align}\label{our_choice_for_Phi}
\phi\nabla\eta(\bar{u}(x+X(t),t))
\end{align} 
 as the test function $\Phi$, and subtract \eqref{shift_solves_equation_integral_formulation} from \eqref{u_solves_equation_integral_formulation}. We can extend the function \eqref{our_choice_for_Phi} to the set $\{(x,t)\in\mathbb{R}\times[0,T) | x=s(t)-X(t)\}$ by defining it to be zero. This extension is still Lipschitz continuous.

This yields,
\begin{equation}
\begin{aligned}\label{difference_of_solutions_equation}
\int\limits_{0}^{T} \int\limits_{-\infty}^{\infty} \Bigg[\partial_t[\phi\nabla\eta(\bar{u}(x+X(t),t))][u(x,t)- \bar{u}(x+X(t),t)] \hspace{2in} 
\\
+\partial_x[\phi\nabla\eta(\bar{u}(x+X(t),t))][f(u(x,t))-f(\bar{u}(x+X(t),t))] \Bigg]\,dxdt \\
+\int\limits_{-\infty}^{\infty} \phi(x,0)\nabla\eta(\bar{u}^0(x))[u^0(x)-\bar{u}^0(x)]\,dx\hspace{1.42in}
\\
=\int\limits_{0}^{T} \int\limits_{-\infty}^{\infty}\phi\nabla\eta(\bar{u}(x+X(t),t)) \Bigg[\Bigg(\partial_x\bigg|_{(x+X(t),t)}\hspace{-.45in}\big(\bar{u}(x,t)\big)\Bigg)\dot{X}(t)\\
+G(\bar{u}(\cdot,t))(x+X(t))-G(u(\cdot,t))(x)\Bigg]\,dxdt.
\end{aligned}
\end{equation}

Recall $\bar{u}$ is a classical solution on the complement of the set $\{(x,t)\in\mathbb{R}\times[0,T) | x=s(t)\}$ and verifies \eqref{solves_equation_shift}. Thus, on the complement of the set $\{(x,t)\in\mathbb{R}\times[0,T) | x=s(t)-X(t)\}$,
\begin{equation}
\begin{aligned}\label{notice_this}
&\partial_t\bigg|_{(x,t)}\hspace{-.21in}\big(\nabla\eta(\bar{u}(x+X(t),t))\big)=\Bigg(\partial_x\bigg|_{(x+X(t),t)}\hspace{-.45in}\bar{u}^T(x,t)\dot{X}(t)+\partial_t\bigg|_{(x+X(t),t)}\hspace{-.45in}\bar{u}^T(x,t)\Bigg)\nabla^2\eta(\bar{u}(x+X(t),t))
\\
&\hspace{1in}=
\Bigg(2\partial_x\bigg|_{(x+X(t),t)}\hspace{-.45in}\bar{u}^T(x,t)\dot{X}(t)
-\partial_x \bigg|_{(x+X(t),t)}\hspace{-.45in} \bar{u}^T(x,t)\big[\nabla f(\bar{u}(x+X(t),t))\big]^T\\
&\hspace{1.5in}+G^T(\bar{u}(\cdot,t))(x+X(t))\Bigg)\nabla^2\eta(\bar{u}(x+X(t),t))
\\
&\hspace{1in}=
\Bigg(2\partial_x\bigg|_{(x+X(t),t)}\hspace{-.45in}\bar{u}^T(x,t)\dot{X}(t)
+G^T(\bar{u}(\cdot,t))(x+X(t))\Bigg)\nabla^2\eta(\bar{u}(x+X(t),t))
\\
&\hspace{1.5in}-\partial_x \bigg|_{(x+X(t),t)}\hspace{-.45in} \bar{u}^T(x,t)\nabla^2\eta(\bar{u}(x+X(t),t))\nabla f(\bar{u}(x+X(t),t)),
\end{aligned}
\end{equation}
because $\big[\nabla f(\bar{u})\big]^T\nabla^2\eta(\bar{u})=\nabla^2\eta(\bar{u})\nabla f(\bar{u})$.

Thus, by \eqref{notice_this} and the definition of the relative flux in \eqref{Z_def},

\begin{equation}
\begin{aligned}\label{5.2.9}
\partial_t\bigg|_{(x,t)}\hspace{-.21in}\big(\nabla\eta(\bar{u}(x+X(t),t))\big)[u(x,t)-\bar{u}(x+X(t),t)]\hspace{2.5in}
\\
+\partial_x\bigg|_{(x,t)}\hspace{-.21in}\big(\nabla\eta(\bar{u}(x+X(t),t))\big)[f(u(x,t))-f(\bar{u}(x+X(t),t))]\hspace{1in}
\\
=
\partial_x \bigg|_{(x+X(t),t)}\hspace{-.45in} \bar{u}^T(x,t)\nabla^2\eta(\bar{u}(x+X(t),t)) f(u(x,t)|\bar{u}(x+X(t),t))
\\
+\Bigg(2\partial_x\bigg|_{(x+X(t),t)}\hspace{-.45in}\bar{u}^T(x,t)\dot{X}(t)
+G^T(\bar{u}(\cdot,t))(x+X(t))\Bigg)\nabla^2\eta(\bar{u}(x+X(t),t))[u(x,t)-\bar{u}(x+X(t),t)].
\end{aligned}
\end{equation}

We combine \eqref{difference_entropy_equations}, \eqref{difference_of_solutions_equation}, and \eqref{5.2.9} to get
\begin{equation}
\begin{aligned}\label{combined}
\int\limits_{0}^{T} \int\limits_{-\infty}^{\infty} [\partial_t \phi \eta(u(x,t)|\bar{u}(x+X(t),t))+\partial_x \phi q(u(x,t);\bar{u}(x+X(t),t))]\,dxdt\hspace{1in} \\
+  \int\limits_{-\infty}^{\infty}\phi(x,0)\eta(u^0(x)|\bar{u}^0(x))\,dx\hspace{2in} \\
\geq
\int\limits_{0}^{T} \int\limits_{-\infty}^{\infty}\phi\Bigg[\nabla\eta(\bar{u}(x+X(t),t))\Bigg(\partial_x\bigg|_{(x+X(t),t)}\hspace{-.45in}\big(\bar{u}(x,t)\big)\Bigg)\dot{X}(t)
\\
+\nabla\eta(\bar{u}(x+X(t),t))G(\bar{u}(\cdot,t))(x+X(t))-\nabla\eta(u(x,t))G(u(\cdot,t))(x)
\\
+\Bigg(\partial_x \bigg|_{(x+X(t),t)}\hspace{-.45in} \bar{u}^T(x,t)\Bigg)\nabla^2\eta(\bar{u}(x+X(t),t)) f(u(x,t)|\bar{u}(x+X(t),t))
\\
+\Bigg(2\partial_x\bigg|_{(x+X(t),t)}\hspace{-.45in}\bar{u}^T(x,t)\dot{X}(t)
\\
+G^T(\bar{u}(\cdot,t))(x+X(t))\Bigg)\nabla^2\eta(\bar{u}(x+X(t),t))[u(x,t)-\bar{u}(x+X(t),t)]
\\
-\nabla\eta(\bar{u}(x+X(t),t)) \Big[\Bigg(\partial_x\bigg|_{(x+X(t),t)}\hspace{-.45in}\big(\bar{u}(x,t)\big)\Bigg)\dot{X}(t)+G(\bar{u}(\cdot,t))(x+X(t))-G(u(\cdot,t))(x)\Big]
\Bigg]\,dxdt
\\
=
\int\limits_{0}^{T} \int\limits_{-\infty}^{\infty}\phi\Bigg[-\nabla\eta(u(x,t))G(u(\cdot,t))(x)
\\
+\Bigg(\partial_x \bigg|_{(x+X(t),t)}\hspace{-.45in} \bar{u}^T(x,t)\Bigg)\nabla^2\eta(\bar{u}(x+X(t),t)) f(u(x,t)|\bar{u}(x+X(t),t))
\\
+\Bigg(2\partial_x\bigg|_{(x+X(t),t)}\hspace{-.45in}\bar{u}^T(x,t)\dot{X}(t)
\\
+G^T(\bar{u}(\cdot,t))(x+X(t))\Bigg)\nabla^2\eta(\bar{u}(x+X(t),t))[u(x,t)-\bar{u}(x+X(t),t)]
\\
-\nabla\eta(\bar{u}(x+X(t),t)) \Big[-G(u(\cdot,t))(x)\Big]
\Bigg]\,dxdt.
\end{aligned}
\end{equation}

Note that we can add zero, to get
\begin{equation}
\begin{aligned}\label{note_that}
-\nabla\eta(u(x,t))G(u(\cdot,t))(x)+G^T(\bar{u}(\cdot,t))(x+X(t))\nabla^2\eta(\bar{u}(x+X(t),t))[u(x,t)-\bar{u}(x+X(t),t)]
\\
-\nabla\eta(\bar{u}(x+X(t),t)) \Big[-G(u(\cdot,t))(x)\Big]\hspace{3in}
\\
=
-G^T(u(\cdot,t))(x)\Bigg(\big(\nabla\eta(u(x,t))\big)^T-\big(\nabla\eta(\bar{u}(x+X(t),t))\big)^T
\\
-\nabla^2\eta(\bar{u}(x+X(t),t))[u(x,t)-\bar{u}(x+X(t),t)]\Bigg)
\\
+
\Bigg(G^T(\bar{u}(\cdot,t))(x+X(t))-G^T(u(\cdot,t))(x)\Bigg)\nabla^2\eta(\bar{u}(x+X(t),t))[u(x,t)-\bar{u}(x+X(t),t)]
\\
=
-G^T(u(\cdot,t))(x)(\nabla\eta(u(x,t)|\bar{u}(x+X(t),t)))^T\hspace{.84in}
\\
+
\Bigg(G^T(\bar{u}(\cdot,t))(x+X(t))-G^T(u(\cdot,t))(x)\Bigg)\nabla^2\eta(\bar{u}(x+X(t),t))[u(x,t)-\bar{u}(x+X(t),t)]
\\
=
-\nabla\eta(u(x,t)|\bar{u}(x+X(t),t))G(u(\cdot,t))(x)\hspace{1.15in}
\\
+
\Bigg(G^T(\bar{u}(\cdot,t))(x+X(t))-G^T(u(\cdot,t))(x)\Bigg)\nabla^2\eta(\bar{u}(x+X(t),t))[u(x,t)-\bar{u}(x+X(t),t)].
\end{aligned}
\end{equation}
This calculation is from \cite{VASSEUR2008323}.

Then, from \eqref{combined} and \eqref{note_that}, we get \eqref{combined1}.

\hfill\break
\uline{Step 2}
\hfill\break

Choose $0<\epsilon<\min\{T-t_0,\frac{1}{2}\delta\}$.

We apply the test function $\omega(t)\chi(x,t)$ to \eqref{combined1}, where

\begin{align}
 \omega(t)\coloneqq
  \begin{cases}
   1 & \text{if } 0\leq t< t_0\\
   \frac{1}{\epsilon}(t_0-t)+1 & \text{if } t_0\leq t < t_0+\epsilon\\
   0 & \text{if } t_0+\epsilon \leq t,
  \end{cases}
\end{align}
and
\begin{align}
 \chi(x,t)\coloneqq
  \begin{cases}
   0 & \text{if } x<h_1(t)\\
   \frac{1}{\epsilon}(x-h_1(t)) & \text{if } h_1(t)\leq x < h_1(t)+\epsilon\\
   1 & \text{if } h_1(t)+\epsilon\leq x \leq h_2(t) -\epsilon\\
   -\frac{1}{\epsilon}(x-h_2(t)) & \text{if } h_2(t)-\epsilon<x\leq h_2(t)\\
   0 & \text{if } h_2(t)<x.
  \end{cases}
\end{align}

The function $\omega$ is modeled from \cite[p.~124]{dafermos_big_book}. The function $\chi$ is from \cite[p.~765]{Leger2011_original}. We get,

\begin{equation}
\begin{aligned}\label{local_plugged_test}
&\int\limits_{0}^{t_0} \Bigg[-\int\limits_{h_1(t)}^{h_1(t)+\epsilon}\frac{1}{\epsilon}\dot{h}_1(t)\eta(u(x,t)|\bar{u}(x+X(t),t))\,dx
+\int\limits_{h_1(t)}^{h_1(t)+\epsilon}\frac{1}{\epsilon}q(u(x,t);\bar{u}(x+X(t),t))\,dx
\\
&+\int\limits_{h_2(t)-\epsilon}^{h_2(t)}\frac{1}{\epsilon}\dot{h}_2(t)\eta(u(x,t)|\bar{u}(x+X(t),t))\,dx-\int\limits_{h_2(t)-\epsilon}^{h_2(t)}\frac{1}{\epsilon}q(u(x,t);\bar{u}(x+X(t),t))\,dx\Bigg]\,dt
\\
&+
\int\limits_{h_1(0)}^{h_2(0)}\eta(u^0(x)|\bar{u}^0(x))\,dx
-
\int\limits_{t_0}^{t_0+\epsilon}\frac{1}{\epsilon}\int\limits_{h_1(t)}^{h_2(t)}\eta(u(x,t)|\bar{u}(x+X(t),t))\,dxdt
+\mathcal{O}(\epsilon)
\\
&&\hspace{-2in}\geq
\int\limits_{0}^{t_0}\int\limits_{h_1(t)}^{h_2(t)}\mbox{RHS}\,dxdt,
\end{aligned}
\end{equation}
where RHS represents everything being multiplied by $\phi$ in the integral on the right hand side of \eqref{combined1}.

We let $\epsilon\to0$ in \eqref{local_plugged_test}. We use dominated convergence, the Lebegue differentiation theorem, and recall that $u$ satisfies the strong trace property (\Cref{strong_trace_definition}). This yields,

\begin{equation}
\begin{aligned}
\int\limits_{0}^{t_0} \bigg[q(u(h_1(t)+,t);\bar{u}((h_1(t)+X(t))+,t))-q(u(h_2(t)-,t);\bar{u}((h_2(t)+X(t))-,t))
\\
+\dot{h}_2(t)\eta(u(h_2(t)-,t)|\bar{u}((h_2(t)+X(t))-,t))\hspace{2in}
\\
-\dot{h}_1(t)\eta(u(h_1(t)+,t)|\bar{u}((h_1(t)+X(t))+,t))\bigg]\,dt\hspace{1.76in}
\\
\geq
\int\limits_{h_1(t_0)}^{h_2(t_0)}\eta(u(x,t_0)|\bar{u}(x+X(t_0),t_0))\,dx
\\
-\int\limits_{h_1(0)}^{h_2(0)}\eta(u^0(x)|\bar{u}^0(x))\,dx
\\
+\int\limits_{0}^{t_0}\int\limits_{h_1(t)}^{h_2(t)}\mbox{RHS}\,dxdt,
\end{aligned}
\end{equation}
where we also used the convexity of $\eta$ to take the limit of the term
\begin{align}
\int\limits_{t_0}^{t_0+\epsilon}\frac{1}{\epsilon}\int\limits_{h_1(t)}^{h_2(t)}\eta(u(x,t)|\bar{u}(x+X(t),t))\,dxdt
\end{align}
for every $t_0$ and not just almost every $t_0$.

We receive \eqref{local_compatible_dissipation_calc}.

\end{proof}

\section{Construction of the shift}\label{construction_of_the_shift}
In this section, we prove

\begin{proposition}[Existence of the shift function]\label{systems_entropy_dissipation_room}

Fix $T>0$.   Assume $u$ is a bounded weak solution to \eqref{system}. Assume $u$ is entropic for the entropy $\eta$, and $u$ has strong traces (\Cref{strong_trace_definition}). 
 Fix $i\in\{1,n\}$. Then let $(\bar{u}_+(t),\bar{u}_-(t),\dot{s}(t))$ be an i-shock for all  $t\in[0,T)$, where $s:[0.T)\to\mathbb{R}$ is a Lipschitz continuous function. Assume also that the map $t\mapsto(\bar{u}_+(t),\bar{u}_-(t))$ is bounded.     For $i=1$, assume the hypotheses $(\mathcal{H})$ hold. Likewise, if $i=n$, assume the hypotheses $(\mathcal{H})^*$ hold.

Assume also that there exists $\rho>0$ such that for all $t\in[0,T)$ 
\begin{align}\label{gap_local_case}
r(t)>\rho,
\end{align}
where $r(t)$ satisfies $S^1_{\bar{u}_-(t)}(r(t))=\bar{u}_+(t)$.

Then, there exists a constant $a>0$ and a Lipschitz continuous map $h: [0,T)\to\mathbb{R}$ with $h(0)=s(0)$ and such that for almost every $t$,
\begin{equation}
\begin{aligned}\label{dissipation_negative_claim}
a\big(q(u_+;\bar{u}_+(t))&-\dot{h}(t)\eta(u_+|\bar{u}_+(t))\big)-q(u_-;\bar{u}_-(t))+\dot{h}(t)\eta(u_-|\bar{u}_-(t)) \leq \\
& -c \abs{\dot{s}(t)-\dot{h}(t)}^2,
\end{aligned}
\end{equation}
where $u_{\pm}\coloneqq u(u(h(t)\pm,t)$. The constants $c,a>0$ depend on $\norm{u}_{L^\infty}$, $\norm{\bar{u}_+(\cdot)}_{L^\infty([0,T))}$, $\norm{\bar{u}_-(\cdot)}_{L^\infty([0,T))}$, and $\rho$. 
\end{proposition}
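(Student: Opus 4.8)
The plan is to construct $h$ as the solution of an ODE whose right-hand side is a truncated/regularized version of the shift-velocity prescribed by the theory of $a$-contraction, and then use the structural hypotheses $(\mathcal{H})$ (or $(\mathcal{H})^*$) together with Lemma~\ref{a_cond_lemma_itself} to verify the pointwise inequality \eqref{dissipation_negative_claim}. I will treat the case $i=1$; the case $i=n$ is dual. The key object is the function
\begin{align*}
F(v,t)\coloneqq q(v;\bar u_-(t))-q(v;\bar u_+(t)) \quad\text{(numerator-type flux difference)},
\end{align*}
and the dissipation functional
\begin{align*}
D_a(v,\dot h,t)\coloneqq a\big(q(v;\bar u_+(t))-\dot h\,\eta(v|\bar u_+(t))\big)-q(v;\bar u_-(t))+\dot h\,\eta(v|\bar u_-(t)),
\end{align*}
which is exactly the left-hand side of \eqref{dissipation_negative_claim} when $v=u_+$ on the first piece and $v=u_-$ on the second; note $D_a$ is affine in $\dot h$, with coefficient $-\big(a\,\eta(u_+|\bar u_+(t))-\eta(u_-|\bar u_-(t))\big)$.

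\emph{Step 1: choice of the shift velocity.} Following Kang--Vasseur, I would define the candidate velocity $V(u_-,u_+,t)$ by setting $D_a$ to be minimized over admissible speeds, i.e.\ choosing $\dot h$ so that the affine function $D_a(\cdot,\dot h,t)$ is pushed as negative as possible subject to the constraint that $\dot h$ stays between the relevant characteristic speeds of $u_\pm$ (this is where the ``greater-than-characteristic-speed correction'' mentioned in the introduction enters). Concretely, when $a\,\eta(u_+|\bar u_+)-\eta(u_-|\bar u_-)\neq 0$ one picks $\dot h$ equal to the value making the entropy-flux part vanish against the dissipation, truncated to a large compact interval $[-M,M]$ with $M$ depending only on $\|u\|_{L^\infty}$ and $f,\eta$; when the coefficient is zero one sets $\dot h=\dot s(t)$. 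One checks $V$ is bounded and measurable in $t$, and Lipschitz (or at least Carathéodory) in its state arguments on the relevant compact set, so that the ODE $\dot h(t)=V(u(h(t)-,t),u(h(t)+,t),t)$, $h(0)=s(0)$, has a Lipschitz solution by the generalized (Filippov) ODE theory for discontinuous right-hand sides used in \cite{Leger2011,MR3519973}. The strong-trace hypothesis is what makes $t\mapsto u(h(t)\pm,t)$ well-defined along any Lipschitz curve.

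\emph{Step 2: the dichotomy and the contraction estimate.} Fix $t$ and write $u_\pm=u(h(t)\pm,t)$, $\bar u_\pm=\bar u_\pm(t)$, $\sigma=\dot s(t)$. Use the triangle identity \eqref{48_kang_vasseur_a_contraction} from Lemma~\ref{triangle_inequality_systems_entropy} with $v=\bar u_-$, $w=\bar u_+$ (and its mirror) to rewrite $D_a$ in terms of $q(u_\pm;\bar u_\pm)-\dot h\,\eta(u_\pm|\bar u_\pm)$ and the cross terms $(\nabla\eta(\bar u_+)-\nabla\eta(\bar u_-))\cdot(\cdots)$ which are controlled using the Rankine--Hugoniot relation for the $i$-shock $(\bar u_-,\bar u_+,\sigma)$. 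Then split into two regimes. \textbf{Regime A (}$u_+$\textbf{ far from }$\bar u_+$\textbf{, i.e.\ $\eta(u_+|\bar u_+)\geq a\,\eta(u_+|\bar u_-)/C$ fails):} here Lemma~\ref{a_cond_lemma_itself} with $\theta$ small forces $u_+\in B_\theta(\bar u_+)$, so we are in the local regime near the shock; one then runs the Kang--Vasseur local computation — using $(\mathcal H2)$/$(\mathcal H2)^*$ (shock speed strictly above $\lambda_1(u_+)$, resp.\ below $\lambda_n(u_-)$) and $(\mathcal H3)$/$(\mathcal H3)^*$ (well-separation of the extremal shock from all other entropic discontinuities), plus the strengthening property $(\mathcal H1)(\mathrm b)$ — to get $D_a\leq -c|\sigma-\dot h|^2$ with $c>0$ uniform, the gap bound \eqref{gap_local_case} (equivalently $r(t)>\rho$) guaranteeing $c$ does not degenerate. \textbf{Regime B (}$u_+$\textbf{ close to }$\bar u_+$, or its mirror\textbf{):} here $\eta(u_+|\bar u_+)$ is tiny, so either $D_a$ is already strictly negative from the $q(u_-;\bar u_-)$ term via $(\mathcal H2)$ and the Liu condition $(\mathcal H1)(\mathrm a)$, or the chosen truncated velocity makes the affine-in-$\dot h$ expression negative by construction; one absorbs $|\sigma-\dot h|^2$ using boundedness of all speeds.

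\emph{Main obstacle.} The hard part will be Step~2, Regime A: reproducing the $a$-contraction inequality in the \emph{relative} setting where the right state $\bar u_+$ is only a moving $i$-shock state of a smooth background rather than a fixed constant — in particular verifying that the constant $c$ in \eqref{dissipation_negative_claim} can be taken uniform in $t$, which requires the uniform gap \eqref{gap_local_case}, the uniform $L^\infty$ bounds on $\bar u_\pm(\cdot)$, and continuity/compactness of the $i$-shock curve data from $(\mathcal H1)$ to rule out degeneration of the quadratic coefficient $a\,\eta(u_+|\bar u_+)-\eta(u_-|\bar u_-)$ near its zero set. A secondary technical point is the careful choice of the truncation level $M$ and the verification that the Filippov solution $h$ is globally defined on $[0,T)$ and Lipschitz with constant depending only on the stated quantities; this is routine given \cite{MR3519973} but must be stated. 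I expect everything else — the algebraic manipulations via Lemma~\ref{triangle_inequality_systems_entropy}, the application of Lemma~\ref{a_cond_lemma_itself}, and the bookkeeping of constants — to be mechanical.
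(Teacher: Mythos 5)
Your proposal captures the broad outline (Filippov ODE for the shift, triangle identity, a-contraction estimate, case analysis using $(\mathcal{H}2)$, $(\mathcal{H}3)$), but there are genuine gaps, chiefly in the construction of the shift velocity and the resulting case analysis.

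First, the velocity field you describe — ``choose $\dot h$ minimizing the affine-in-$\dot h$ functional $D_a$ over a truncated interval, falling back to $\dot h=\dot s$ when the coefficient vanishes'' — is not what the paper does, and it is not clear it can be made to work. The paper takes the very specific deterministic function of the trace value
\begin{align*}
V(u,t)=\lambda_1(u)-C_*\,\mathbbm{1}_{\{w\,:\,a\eta(w|\bar u_+(t))<\eta(w|\bar u_-(t))\}}(u),
\end{align*}
i.e.\ the $1$-characteristic speed, dropped by a large constant $C_*$ whenever $u$ lies \emph{outside} the set $R_a=\{w:\eta(w|\bar u_-)\le a\eta(w|\bar u_+)\}$. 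This specific form is what makes the Filippov inclusion $\dot h(t)\in I[V(u_+,t),V(u_-,t)]$ productive: in the case where both traces $u_\pm$ lie in $R_a$, one gets $\dot h\in I[\lambda_1(u_+),\lambda_1(u_-)]$, which is exactly the window in which $(\mathcal{H}2)$ (no right $1$-contacts) and $(\mathcal{H}3)$ (well-separation of $1$-shocks) force $(u_-,u_+,\dot h)$ to be a $1$-shock $S^1_{u_-}(s)=u_+$, so that Proposition~\ref{dissipation_negative_theorem} directly applies. Your ``minimizer'' construction would put $\dot h$ at an interval endpoint rather than obtaining a differential inclusion of the right shape, and you do not explain how the resulting triple $(u_-,u_+,\dot h)$ is verified to be a $1$-shock with $u_-\in R_a$ so that the a-contraction estimate is available.

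Second, your dichotomy is internally confused. You invoke Lemma~\ref{a_cond_lemma_itself} to conclude $u_+\in B_\theta(\bar u_+)$, but that lemma says $R_a\subset B_\theta(u_L)$, i.e.\ $R_a$ lives in a small ball around $\bar u_-$, not $\bar u_+$. The actual case split in the paper is not ``close to/far from $\bar u_+$'' but rather the four combinations of whether each trace $u_\pm$ lies inside or outside $R_a$. In the case both traces lie outside $R_a$, the paper uses that $\dot h$ is then forced to be $\le -C_*+\sup|\lambda_1|$ (very negative), which contradicts $(\mathcal{H}2)$ unless $u_+=u_-=:v$; then either $v$ is at distance $\ge\gamma_0$ from $R_a$, in which case the quantitative lower bound $\eta(v|\bar u_-)-a\eta(v|\bar u_+)\ge c_4\gamma_0^2$ together with $\dot h\ll 0$ makes $D_a\le -1$, or $v$ is within $\gamma_0$ of $\partial R_a$, in which case \eqref{dissipation_negative_boundary_of_convex_set} is used. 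The mixed case (one trace inside $R_a$, the other outside) is shown to be impossible via $(\mathcal{H}2)$ (it would produce a right $1$-contact). None of this is recoverable from your Regimes A/B as written.

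Finally, your diagnosis of the ``main obstacle'' is partially off. You worry about uniformity in $t$ because $\bar u_\pm(t)$ is moving; in fact the paper handles this cleanly by stating Proposition~\ref{dissipation_negative_theorem} with constants depending only on a compact parameter range $\abs{u_L}\le B$, $s_R\in[\rho,B]$, so uniformity is automatic once the constants in the a-contraction lemma are tracked carefully (which is where the real work is — the estimates \eqref{estimates} and Corollary~\ref{entropy_lost_right_side_1_shock}). The genuinely hard steps you should have flagged are (i) constructing $V$ so that the Filippov inclusion lands you exactly inside the windows where $(\mathcal{H}2)$/$(\mathcal{H}3)$ bite, and (ii) making the case analysis exhaustive in terms of membership in $R_a$.
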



%
%
%

The proof of \Cref{systems_entropy_dissipation_room} uses
\begin{proposition}
\label{dissipation_negative_theorem}
Assume the hypotheses $(\mathcal{H})$ hold. 

Let $B,\rho>0$. Then there exists a constant $a_*\in(0,1)$ depending on $B$ and $\rho$ such that the following is true:

For any $a\in(0,a_*)$, there exists a constant $c_1$ depending on $B$, $\rho$, and $a$ such that
\begin{equation}
\begin{aligned}\label{dissipation_negative}
a\big(q(S^1_u(s);S^1_{u_L}(s_R))&-\sigma^1_u(s)\eta(S^1_u(s)|S^1_{u_L}(s_R))\big)-q(u;u_L)+\sigma^1_u(s)\eta(u|u_L) \leq \\
& -c_1\abs{\sigma^1_{u_L}(s_R)-\sigma^1_u(s)}^2,
\end{aligned}
\end{equation}
for all  $u_L\in\mathcal{V}$ with $\abs{u_L}\leq B$, all $u\in\{u | \eta(u|u_L)\leq a\eta(u|S^1_{u_L}(s_R))\}$, any $s\in[0,B]$, and any $s_R\in[\rho,B]$. 

Moreover,
\begin{equation}\label{dissipation_negative_boundary_of_convex_set}
a\big(q(u;S^1_{u_L}(s_R))-\lambda_1(u)\eta(u|S^1_{u_L}(s_R))\big)-q(u;u_L)+\lambda_1(u)\eta(u|u_L)\leq -c_1,
\end{equation}
for all $u\in\{u | \eta(u|u_L)\leq a\eta(u|S^1_{u_L}(s_R))\}$ and for the same constant $c_1$.

\end{proposition}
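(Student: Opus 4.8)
The plan is to reduce the statement to a one-dimensional analysis along the $1$-shock curve, exploiting the hypotheses $(\mathcal{H}1)$--$(\mathcal{H}3)$ and the localization provided by \Cref{a_cond_lemma_itself}. First I would fix notation: write $\bar{u}_R \coloneqq S^1_{u_L}(s_R)$, and recall from \eqref{shock_strength_comparable_s_systems1} that $s_R - \rho$ is comparable to $\abs{u_L - \bar{u}_R}^2$; since $s_R \geq \rho$ and by \eqref{gap_local_case}-type considerations this gives a uniform lower bound on the shock strength $\eta(u_L | \bar{u}_R)$. By \Cref{a_cond_lemma_itself}, choosing $a_*$ small enough (depending on $B$ and $\rho$ via the constant there, and on the lower bound for $\eta(u_L|\bar{u}_R)$) forces the set $R_a = \{u \mid \eta(u|u_L) \leq a\,\eta(u|\bar{u}_R)\}$ to lie in a small ball $B_\theta(u_L)$, with $\theta$ controlled by $a$. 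This is the key structural input: the ``left'' state $u$ can be taken as close to $u_L$ as we like by shrinking $a$, and then $S^1_u(s)$ stays close to $S^1_{u_L}(s)$ uniformly in $s \in [0,B]$ by the $C^1$-dependence of the shock curve on its base point in $(\mathcal{H}1)$.

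Next I would set up the quantity to be estimated as a function $F(u,s,s_R)$ — the left-hand side of \eqref{dissipation_negative} — and analyze it by comparing against the reference value at $u = u_L$. When $u = u_L$, the first two terms become $a\big(q(S^1_{u_L}(s); \bar{u}_R) - \sigma^1_{u_L}(s)\eta(S^1_{u_L}(s)|\bar{u}_R)\big)$ and the last two terms vanish (since $\eta(u_L|u_L) = 0$ and $q(u_L;u_L) = 0$). Here the entropic Rankine--Hugoniot inequality \eqref{entropic_shock_condition_system}, applied to the shock connecting $\bar{u}_R$ on one side — more precisely, the dissipation identity for the pair $(\bar{u}_R, u_L)$ together with the triangle-type identity \Cref{triangle_inequality_systems_entropy} — shows that the $a$-weighted term is strictly negative, with a quantitative gap controlled by $\abs{\sigma^1_{u_L}(s_R) - \sigma^1_{u_L}(s)}^2$; the mechanism is exactly the one behind Liu-admissible shocks being strictly entropy-dissipative, using Property (a) and (b) of $(\mathcal{H}1)$ and $(\mathcal{H}2)$ (to rule out the right state being $1$-characteristic). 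I would then Taylor-expand $F(u,s,s_R) - F(u_L,s,s_R)$ in $u - u_L$: the zeroth-order term is the negative quantity just discussed, and the remainder is $O(\abs{u - u_L})$ uniformly, hence $O(\theta) = O(\sqrt{a})$ by the localization, which can be absorbed once $a$ is small. Some care is needed because $\sigma^1_u(s)$ also shifts with $u$, but this too is $O(\abs{u-u_L})$ and enters the $\abs{\sigma^1_{u_L}(s_R) - \sigma^1_u(s)}^2$ bookkeeping controllably.

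The main obstacle I anticipate is the case where $\sigma^1_{u_L}(s_R) - \sigma^1_u(s)$ is small — i.e., $s$ is close to $s_R$ and $u$ is close to $u_L$ — so that the right-hand side of \eqref{dissipation_negative} is small and one needs the left-hand side to be correspondingly small (and negative) rather than merely negative. Near $s = s_R$, $u = u_L$ the function $F$ vanishes to the appropriate order, and one must check that its Hessian (or the relevant second-order behavior in $(s - s_R, u - u_L)$) is genuinely negative-definite in the $\sigma$-direction after the $a$-weighting; this is where the strict monotonicity $\frac{d}{ds}\sigma^1_u(s) < 0$ in $(\mathcal{H}1)$(a) and the strict growth $\frac{d}{ds}\eta(u|S^1_u(s)) > 0$ in $(\mathcal{H}1)$(b) do the essential work, converting $s - s_R$ into both a $\sigma$-gap and an entropy gap of matching order. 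The second statement \eqref{dissipation_negative_boundary_of_convex_set} is the ``boundary'' case $s = 0$ (so $S^1_u(0) = u$ and $\sigma^1_u(0) = \lambda_1(u)$): here the left-hand side is bounded away from zero because $u$ lies near $u_L$ while $\bar{u}_R$ is a definite distance away (shock strength $\geq$ a constant depending on $\rho$), so $\eta(u|\bar{u}_R)$ is bounded below and the term $-q(u;u_L) + \lambda_1(u)\eta(u|u_L) = O(\abs{u-u_L}^2) = O(a)$ is negligible; I would extract the constant $c_1$ from the same quantitative entropy-dissipation estimate, now evaluated with $s=0$, shrinking $a_*$ once more if needed so the $O(\sqrt a)$ error does not overwhelm it.
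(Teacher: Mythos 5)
There is a genuine gap. Your strategy of Taylor-expanding $F(u,s,s_R)-F(u_L,s,s_R)$ in $u-u_L$ picks up a first-order error $O(|u-u_L|)=O(\sqrt a)$ from the $u$-dependence of $S^1_u(s)$ (and of $\sigma^1_u(s)$), and you assert this ``can be absorbed once $a$ is small.'' But the quantity it must be absorbed by is $-c_1|\sigma^1_{u_L}(s_R)-\sigma^1_u(s)|^2$, which can be arbitrarily small independently of $a$ (take $s$ near $s_R$ and $u$ near $u_L$); similarly your zeroth-order ``DiPerna'' term $F(u_L,s,s_R)$ is only of size $a|\sigma^1_{u_L}(s)-\sigma^1_{u_L}(s_R)|^2$, which also vanishes as $s\to s_R$. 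A uniformly $O(\sqrt a)$ additive error cannot be dominated by a negative term that goes to zero. You correctly flag this obstacle at the end and propose a Hessian check near $(u_L,s_R)$, but that analysis is not carried out and is precisely the crux; left as stated, the proof does not close.

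The paper avoids the issue by a different decomposition: rather than expanding around $u=u_L$, it applies the triangle identity (\Cref{triangle_inequality_systems_entropy}) with intermediate state $w=S^1_u(s_R)$, writing $q(S^1_u(s);u_R)-\sigma^1_u(s)\eta(S^1_u(s)|u_R)=I_1+I_2+I_3$, where $I_2=q(S^1_u(s);S^1_u(s_R))-\sigma^1_u(s)\eta(S^1_u(s)|S^1_u(s_R))$ is the DiPerna dissipation along the shock curve \emph{from $u$} (controlled by \Cref{entropy_lost_right_side_1_shock}), and $I_1,I_3$ are manifestly of order $|u-u_L|^2$ or cross-type $|u-u_L|\,|\sigma^1_u(s)-\sigma^1_u(s_R)|$, because $|S^1_{u_L}(s_R)-S^1_u(s_R)|\lesssim|u-u_L|$ and $q(\cdot\,;\cdot),\eta(\cdot|\cdot)$ are quadratic. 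These second-order errors are then absorbed by Young's inequality against the quadratic negativity of $I_2$ and against the strictly negative term $-q(u;u_L)+\sigma_0\eta(u|u_L)\le -\beta\eta(u|u_L)$ (Step~1 of the paper's proof) — a term you treat as a negligible $O(a)$ contribution but which the paper uses actively as an absorption mechanism. Finally, the paper's delicate choice of $\sigma_0$ (interpolating between $\lambda_1(u_L)$ and the shock speed, with gap proportional to $s_R$) and the case split $s\ge s_R/2$ versus $s<s_R/2$, neither present in your sketch, are what make the bookkeeping close uniformly in $(s,s_R)$.
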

\begin{remark}
The proof of \Cref{dissipation_negative_theorem} holds when we only have $\eta\in C^2$.
\end{remark}

 \Cref{dissipation_negative_theorem} uses ideas from the proof of Lemma 4.3 in \cite{MR3519973}, but to prove \Cref{dissipation_negative_theorem} we keep careful track of the dependencies on the constants and make sure in our calculations  to leave some extra negativity in the entropy dissipation lost at the shock $(u_L,u_R,\sigma_{L,R})$ (thus we have a negative right hand side in our \eqref{dissipation_negative} and \eqref{dissipation_negative_boundary_of_convex_set}). The idea of the extra negativity in the entropy dissipation is similar to the work \cite{2017arXiv171207348K,scalar_move_entire_solution}.

To prove \Cref{dissipation_negative_theorem}, we will need 
\begin{corollary}\label{entropy_lost_right_side_1_shock}
Assume the system \eqref{system} satisfies the hypothesis $(\mathcal{H}1)$. Fix $B,\rho>0$. Then there exists $k,\delta_0>0$ depending on $B$ and $\rho$ such that for any $\delta\in(0,\delta_0]$, $u\in\mathcal{V}\cap B_{r_0}({I_{-}})$ with $\abs{u}\leq B$ and for any $s_0\in(\rho,B)$ and $s\geq0$,
\begin{equation}
\begin{aligned}\label{entropy_lost_right_side_1_shock_inequalities}
&q(S_u^1(s);S_u^1(s_0))-\sigma_u^1(s)\eta(S_u^1(s)|S_u^1(s_0))\leq -k\abs{\sigma_u^1(s)-\sigma_u^1(s_0)}^2,\hspace{.2in}\mbox{for } \abs{s-s_0}<\delta,\\
&q(S_u^1(s);S_u^1(s_0))-\sigma_u^1(s)\eta(S_u^1(s)|S_u^1(s_0))\leq -k\delta\abs{\sigma_u^1(s)-\sigma_u^1(s_0)},\hspace{.2in}\mbox{for } \abs{s-s_0}\geq\delta.
\end{aligned}
\end{equation}
\end{corollary}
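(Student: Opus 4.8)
My plan is to reduce the estimate to one scalar function and one differential identity. Fix $u$ and $s_0$ as in the statement, abbreviate $\beta(s):=S_u^1(s)$, $\sigma(s):=\sigma_u^1(s)$, and set
\[
D(s):=q(\beta(s);\beta(s_0))-\sigma(s)\,\eta(\beta(s)\,|\,\beta(s_0)),
\]
which is precisely the left-hand side of \eqref{entropy_lost_right_side_1_shock_inequalities}, and note $D(s_0)=0$. The heart of the argument is to compute $D'$. Using the compatibility relation \eqref{compatibility_relation_eta_system} (so that, for generic arguments, $\partial_a q(a;b)=(\nabla\eta(a)-\nabla\eta(b))\nabla f(a)$ and $\partial_a\eta(a|b)=\nabla\eta(a)-\nabla\eta(b)$), the Rankine--Hugoniot identity $f(\beta(s))-f(u)=\sigma(s)(\beta(s)-u)$ from $(\mathcal{H}1)$ together with its $s$-derivative $(\nabla f(\beta(s))-\sigma(s)I)\beta'(s)=\sigma'(s)(\beta(s)-u)$, and the triangle identity of \Cref{triangle_inequality_systems_entropy} in the form $(\nabla\eta(\beta(s))-\nabla\eta(\beta(s_0)))\cdot(\beta(s)-u)=\eta(u|\beta(s))+\eta(\beta(s)|\beta(s_0))-\eta(u|\beta(s_0))$, the $f$-terms cancel and one is left with
\[
D'(s)=\sigma'(s)\,\big[\,\eta(u\,|\,S_u^1(s))-\eta(u\,|\,S_u^1(s_0))\,\big].
\]
This uses only $\eta\in C^2$. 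Since $\sigma'<0$ by $(\mathcal{H}1)$(a) and $s\mapsto\eta(u|S_u^1(s))$ is strictly increasing by $(\mathcal{H}1)$(b), we get $D'<0$ to the right of $s_0$, $D'>0$ to the left, so $D(s)<0$ for $s\ne s_0$ — matching the negative right-hand sides claimed.

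To get uniform constants I will take $\delta_0<\rho/2$. This handles the degeneracy of $(\mathcal{H}1)$(b): when $|s-s_0|<\delta\le\delta_0$, both $s$ and $s_0$ lie in $(\rho/2,\,B+\rho/2)$, away from $s=0$ where $\frac{d}{ds}\eta(u|S_u^1(s))$ vanishes. On the compact set $\{(s,u):\rho/2\le s\le B+\rho/2,\ |u|\le B\}$, continuity in $(s,u)$ of $\sigma'$ and of $e(r):=\frac{d}{dr}\eta(u|S_u^1(r))$ — from the $C^1$-hypotheses in $(\mathcal{H}1)$, $\eta\in C^2$, and the arc-length parameterization, which keeps $S_u^1(r)$ in a fixed compact subset of $\mathcal V$ — produces constants $0<c\le|\sigma'|\le C_\sigma<\infty$ and $0<m\le e(r)$ depending only on $B,\rho$. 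Integrating the identity twice and applying Fubini gives
\[
-D(s)=\int_{s_0}^{s}e(r)\,\big[\sigma(r)-\sigma(s)\big]\,dr,
\]
an integral of a pointwise nonnegative integrand.

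Both inequalities then come from restricting this integral to a sub-interval near $s_0$. For $|s-s_0|<\delta$, restrict to the half-interval between $s_0$ and $(s+s_0)/2$, where $e\ge m$ and $|\sigma(r)-\sigma(s)|\ge\tfrac{c}{2}|s-s_0|$, giving $-D(s)\ge\tfrac{cm}{4}|s-s_0|^2\ge\tfrac{cm}{4C_\sigma^2}|\sigma(s)-\sigma(s_0)|^2$. For $|s-s_0|\ge\delta$, split on the size of $|\sigma(s)-\sigma(s_0)|$: if it is $\ge 2C_\sigma\delta$, restrict to the $\delta$-interval on the $s_0$-side, where $|\sigma(r)-\sigma(s)|\ge\tfrac12|\sigma(s)-\sigma(s_0)|$, giving $-D(s)\ge\tfrac{m\delta}{2}|\sigma(s)-\sigma(s_0)|$; if it is $<2C_\sigma\delta$, restrict to the $\delta/2$-interval on the $s_0$-side, where $|\sigma(r)-\sigma(s)|\ge\tfrac{c\delta}{2}$, giving $-D(s)\ge\tfrac{cm}{4}\delta^2\ge\tfrac{cm}{8C_\sigma}\delta|\sigma(s)-\sigma(s_0)|$. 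The case $s<s_0$ is symmetric, and for $s$ beyond the compact range the same bounds persist because we only ever use $e\ge m$ and $|\sigma'|\ge c$ on a bounded interval near $s_0$ and discard the rest of the nonnegative integrand (using only the pointwise content of $(\mathcal{H}1)$(a)--(b) there). Taking $k$ equal to the minimum of the constants above finishes the proof. The main difficulty is the bookkeeping — pinning down the differential identity (the cancellations via Rankine--Hugoniot and the triangle identity) and keeping all constants dependent only on $B$ and $\rho$, which is exactly what $\delta_0<\rho/2$ and the dichotomy accomplish.
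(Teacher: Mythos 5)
Your proof is correct, and it follows essentially the same route as the paper's, so the comparison below is about packaging rather than substance. The differential identity $D'(s)=\sigma'(s)\big[\eta(u|S_u^1(s))-\eta(u|S_u^1(s_0))\big]$ that you derive via the Rankine--Hugoniot relation and the triangle identity of \Cref{triangle_inequality_systems_entropy} is precisely the integrated content of the DiPerna/Lax formula \eqref{dissipation_formula_2} which the paper invokes as \Cref{entropy_lost_i_shock_lemma}, so you have in effect re-proved that lemma on the way (the paper cites it). Where you diverge is in how the constants are extracted. The paper stays with $\int_{s_0}^s\sigma'(t)\big[\eta(u|S_u^1(t))-\eta(u|S_u^1(s_0))\big]\,dt$ and chooses $\delta_0$ from a uniform-continuity modulus so that both $\sigma'$ and $\frac{d}{ds}\eta(u|S_u^1(s))$ remain within a factor of two of their values at $s_0$ on $|s-s_0|\le\delta_0$. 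You instead integrate once more (Fubini) to get $-D(s)=\int_{s_0}^s e(r)\big[\sigma(r)-\sigma(s)\big]\,dr$, which has a manifestly nonnegative integrand; you then restrict to a sub-interval pinned near $s_0$ and take $\delta_0<\rho/2$ so that the sub-interval lies in the compact set $\{\rho/2\le r\le B+\rho/2,\ |u|\le B\}$ where $0<c\le|\sigma'|\le C_\sigma$ and $0<m\le e$. This is a cleaner source for the constants than the paper's uniform-continuity step. Your explicit dichotomy on whether $|\sigma(s)-\sigma(s_0)|\gtrless 2C_\sigma\delta$ in the far case also has the virtue of establishing the statement for every $\delta\in(0,\delta_0]$ directly; the paper's write-up proves both inequalities only at $\delta=\delta_0$, and the general $\delta$ then needs the small extra observation that $|\sigma(s)-\sigma(s_0)|\ge c\,\delta$ when $\delta\le|s-s_0|<\delta_0$. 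In both approaches $k$ and $\delta_0$ correctly depend only on $B$ and $\rho$.
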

The formulas \eqref{dissipation_formula_2} and \eqref{entropy_lost_right_side_1_shock_inequalities} are modifications on a key lemma due to DiPerna \cite{MR523630}. Our proof of \Cref{entropy_lost_right_side_1_shock} is based on the proof of a very similar result in \cite[p.~387-9]{MR3519973}. We modify the proof in \cite[p.~387-9]{MR3519973} -- being careful to keep the constants $k$ and $\delta_0$ uniform in $s_0$ and $u$.

The proof of \Cref{dissipation_negative_theorem} is based on the formulas\eqref{entropy_lost_right_side_1_shock_inequalities}, and this is where the negative right hand sides in \eqref{dissipation_negative} and \eqref{dissipation_negative_boundary_of_convex_set} come from.

\Cref{entropy_lost_right_side_1_shock} itself follows from \Cref{entropy_lost_i_shock_lemma} giving us an explicit formula for the entropy lost at an entropic i-shock $(u,S^i_u(s))$, for any i-family:

\begin{lemma}\label{entropy_lost_i_shock_lemma}
For any i-shock ($i\in\{1,\ldots,n\}$) $(u,S^i_u(s),\sigma^i_u(s))$ and any $v\in\mathbb{R}^n$,
\begin{align}\label{dissipation_formula_1}
q(S^i_u(s);v)-\sigma^i_u(s)\eta(S^i_u(s)|v)=q(u;v)-\sigma^i_u(s)\eta(u|v)+\int\limits_0^s\frac{\mbox{d}}{\mbox{dt}}\sigma^i_u(t)\eta(u|S^i_u(t))\,\mbox{d}t.
\end{align}

Therefore, for any $s\geq0, s_0>0$, 
\begin{align}\label{dissipation_formula_2}
q(S^i_u(s);S^i_u(s_0))-\sigma^i_u(s)\eta(S^i_u(s)|S^i_u(s_0))=\int\limits_{s_0}^s\frac{\mbox{d}}{\mbox{dt}}\sigma^i_u(t)\Big(\eta(u|S^i_u(t))-\eta(u|S^i_u(s_0))\Big)\,\mbox{d}t.
\end{align}
\end{lemma}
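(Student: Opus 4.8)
The goal is to prove \Cref{entropy_lost_i_shock_lemma}, which provides the two integral formulas \eqref{dissipation_formula_1} and \eqref{dissipation_formula_2} for the entropy dissipation across an i-shock.

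\textbf{Overall approach.} The plan is to differentiate the left-hand side of \eqref{dissipation_formula_1} in the arc-length parameter $s$, show that the derivative equals $\frac{\mathrm{d}}{\mathrm{d}s}\sigma^i_u(s)\,\eta(u|S^i_u(s))$, and then integrate from $0$ to $s$, using that at $s=0$ we have $S^i_u(0)=u$ so the left-hand side reduces to $q(u;v)-\sigma^i_u(0)\eta(u|v)$. The second formula \eqref{dissipation_formula_2} then follows by applying \eqref{dissipation_formula_1} with $v=S^i_u(s_0)$ and with $v=u$ and subtracting, or more directly by applying \eqref{dissipation_formula_1} twice, once at parameter $s$ and once at parameter $s_0$, both with the same base point $v=S^i_u(s_0)$, and noting that the $q(u;S^i_u(s_0))$ and $\sigma$-dependent boundary terms cancel in the right combination; one must be slightly careful because the $\sigma^i_u(s)$ coefficient differs from $\sigma^i_u(s_0)$, and this is precisely what produces the $\eta(u|S^i_u(s_0))$ correction inside the integrand of \eqref{dissipation_formula_2}.

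\textbf{Key computational step.} Write $w(s) := S^i_u(s)$ and $\sigma(s) := \sigma^i_u(s)$, so that the Rankine--Hugoniot relation $f(w(s))-f(u)=\sigma(s)(w(s)-u)$ holds for all $s\in[0,s_u)$, and differentiate it in $s$ to get $\nabla f(w)\,w' = \sigma' (w-u) + \sigma\, w'$. Then I would compute
\[
\frac{\mathrm{d}}{\mathrm{d}s}\Big(q(w;v)-\sigma\,\eta(w|v)\Big)
= \nabla q(w)\cdot w' - \nabla\eta(v)\cdot\nabla f(w)\,w' - \sigma'\eta(w|v) - \sigma\big(\nabla\eta(w)-\nabla\eta(v)\big)\cdot w',
\]
using the definitions $q(w;v)=q(w)-q(v)-\nabla\eta(v)\cdot(f(w)-f(v))$ and $\eta(w|v)=\eta(w)-\eta(v)-\nabla\eta(v)\cdot(w-v)$ and the compatibility relation $\nabla q = \nabla\eta\,\nabla f$, so $\nabla q(w)\cdot w' = \nabla\eta(w)\cdot\nabla f(w)\,w'$. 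Substituting the differentiated Rankine--Hugoniot identity for $\nabla f(w)\,w'$ and cancelling the $\nabla\eta(w)\cdot\nabla f(w)w'$ and $\nabla\eta(v)\cdot\nabla f(w)w'$ terms against the $\sigma(\nabla\eta(w)-\nabla\eta(v))\cdot w'$ term leaves exactly $-\sigma'\eta(w|v)$ plus the cross terms $\big(\nabla\eta(w)-\nabla\eta(v)\big)\cdot\sigma'(w-u)$; regrouping, the surviving quantity is $\sigma'\big[-\eta(w|v) + (\nabla\eta(w)-\nabla\eta(v))\cdot(w-u)\big]$. The bracket is, by the triangle-type identity of \Cref{triangle_inequality_systems_entropy} (or a direct expansion), equal to $-\eta(u|w) - \eta(w|v) + \eta(u|v) + \eta(w|v) - \ldots$; a short bookkeeping shows it simplifies to $-\eta(u|w)$ when $v$ is chosen appropriately, but in the general-$v$ statement one must track it as $-[\eta(w|v)-(\nabla\eta(w)-\nabla\eta(v))\cdot(w-u)]$. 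Comparing with what the claimed formula requires after differentiation, namely $\frac{\mathrm{d}}{\mathrm{d}s}\sigma\cdot\eta(u|w)$, I expect these to match; I would double-check the sign and the identity $(\nabla\eta(w)-\nabla\eta(v))\cdot(w-u) - \eta(w|v) = \eta(u|v)-\eta(u|w)$, which is exactly \Cref{triangle_inequality_systems_entropy} rearranged, and the $\eta(u|v)$ piece is constant in $s$ so it integrates to the boundary term.

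\textbf{Main obstacle.} The genuinely delicate point is the algebraic identity relating $\frac{\mathrm{d}}{\mathrm{d}s}\big(q(w;v)-\sigma\eta(w|v)\big)$ to $\sigma'\eta(u|w)$ — getting all signs right and correctly invoking the triangle inequality from \Cref{triangle_inequality_systems_entropy} to convert the mixed expression $(\nabla\eta(w)-\nabla\eta(v))\cdot(w-u)-\eta(w|v)$ into $\eta(u|v)-\eta(u|w)$. Everything else (regularity to justify differentiating under the integral — guaranteed by the $C^1$ hypotheses on $(s,u)\mapsto S^i_u(s)$ and $(s,u)\mapsto\sigma^i_u(s)$ in $(\mathcal{H}1)$, $C^2$ on $f,q$ and $C^3$, or even just $C^2$, on $\eta$, and the fundamental theorem of calculus) is routine. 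For \eqref{dissipation_formula_2}, the only subtlety is handling the mismatch between the constant coefficient one would like, $\sigma^i_u(s_0)$, and the actual coefficient $\sigma^i_u(s)$ in front of $\eta(S^i_u(s)|S^i_u(s_0))$; writing $q(S^i_u(s);S^i_u(s_0))-\sigma^i_u(s)\eta(S^i_u(s)|S^i_u(s_0))$ via \eqref{dissipation_formula_1} with $v=S^i_u(s_0)$ gives $q(u;S^i_u(s_0))-\sigma^i_u(s)\eta(u|S^i_u(s_0))+\int_0^s (\sigma^i_u)'(t)\eta(u|S^i_u(t))\,dt$, and the analogous expression at $s=s_0$ vanishes on the left (since $\eta(S^i_u(s_0)|S^i_u(s_0))=0$ and $q(S^i_u(s_0);S^i_u(s_0))=0$), giving $0 = q(u;S^i_u(s_0))-\sigma^i_u(s_0)\eta(u|S^i_u(s_0))+\int_0^{s_0}(\sigma^i_u)'(t)\eta(u|S^i_u(t))\,dt$; subtracting kills $q(u;S^i_u(s_0))$ and combines the two $\eta(u|S^i_u(s_0))$ terms with coefficient $\sigma^i_u(s_0)-\sigma^i_u(s) = -\int_{s_0}^{s}(\sigma^i_u)'(t)\,dt$, which is exactly the term that merges into the integrand to yield $\int_{s_0}^s (\sigma^i_u)'(t)\big(\eta(u|S^i_u(t))-\eta(u|S^i_u(s_0))\big)\,dt$, as claimed.
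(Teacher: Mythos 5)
Your proposal is correct and follows essentially the approach the paper alludes to: the paper gives no proof of \eqref{dissipation_formula_1} itself, citing Lax and \cite{MR3537479} and remarking only that it is a direct consequence of Rankine--Hugoniot and that \eqref{dissipation_formula_2} follows by applying \eqref{dissipation_formula_1} twice — which is exactly your argument for the second formula. Your differentiate-and-integrate derivation of \eqref{dissipation_formula_1} is the standard Lax computation that those references carry out. One correction to the identity you flagged for double-checking: rearranging \Cref{triangle_inequality_systems_entropy} gives
\begin{align*}
(\nabla\eta(w)-\nabla\eta(v))\cdot(w-u) - \eta(w|v) \;=\; \eta(u|w) - \eta(u|v),
\end{align*}
not $\eta(u|v)-\eta(u|w)$. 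With this sign, the derivative of $q(w;v)-\sigma\eta(w|v)$ in $s$ is $\sigma'(s)\big[\eta(u|w(s)) - \eta(u|v)\big]$; since $\eta(u|v)$ is constant in $s$, integrating the second term yields $-(\sigma(s)-\sigma(0))\eta(u|v)$, which combines with the boundary value $q(u;v)-\sigma(0)\eta(u|v)$ at $s=0$ to produce the $q(u;v)-\sigma^i_u(s)\eta(u|v)$ on the right side of \eqref{dissipation_formula_1}, closing the argument. The remainder of your proposal, including the subtraction step for \eqref{dissipation_formula_2} with $v=S^i_u(s_0)$ evaluated at $s$ and at $s_0$, is correct as written.
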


See Lax \cite{MR0093653} for the formula \eqref{dissipation_formula_1}. For a proof of \eqref{dissipation_formula_1}, see \cite{MR3537479}. Note that \eqref{dissipation_formula_1} and \eqref{dissipation_formula_2} hold for a shock $(u,S^i_u(s),\sigma^i_u(s))$ from any $i$-family, $i=1,2,\ldots,n$, and not just extremal families (1-family or n-family) -- the relation \eqref{dissipation_formula_1} is a direct consequence of the Rankine-Hugoniot condition. Further, \eqref{dissipation_formula_2} comes from applying \eqref{dissipation_formula_1} twice.

\subsection{Proof of \Cref{entropy_lost_right_side_1_shock}} This is based on the proof of a similar result in \cite[p.~387-9]{MR3519973}.
\hfill \break
Define
\begin{align}
&M\coloneqq \sup_{s\in(0,B),\hspace{.05in}\abs{u}\leq B} \frac{\mbox{d}}{\mbox{d}s}\sigma^1_u(s),\\
&P\coloneqq \inf_{s\in(\rho,B),\hspace{.05in}\abs{u}\leq B} \frac{\mbox{d}}{\mbox{d}s}\eta(u|S^1_u(s)).
\end{align}

Note that by Property (a) of $(\mathcal{H}1)$ $M<0$ and by Property (b) of $(\mathcal{H}1)$ $P>0$. Furthermore, note that $M$ and $P$ depend only on the system \eqref{system}, \eqref{entropy_condition_distributional_system_chitchat}, $B$ and $\rho$.

Then by uniform continuity on the compact set $\{(s,u)|s\in[0,B] \mbox{ and } \abs{u}\leq B\}$, there exists $\delta_0>0$ such that for all $s_0\in(\rho,B)$ and for all $s\geq0$ with $\abs{s_0-s}\leq \delta_0$,
\begin{equation}
\begin{aligned}
&\abs{\frac{\mbox{d}}{\mbox{d}s}\sigma^1_u(s)-\frac{\mbox{d}}{\mbox{d}s}\sigma^1_u(s_0)}\leq \frac{1}{2}\abs{M},\\
&\abs{\frac{\mbox{d}}{\mbox{d}s}\eta(u|S^1_u(s))-\frac{\mbox{d}}{\mbox{d}s}\eta(u|S^1_u(s_0))}\leq \frac{1}{2}P,\\
\end{aligned}
\end{equation}
Note that $\delta_0$ only depends on the system \eqref{system}, \eqref{entropy_condition_distributional_system_chitchat},  $B$ and $\rho$.

In particular, 
\begin{equation}
\begin{aligned}\label{RHS_entropy_dissipation_step1}
&\abs{\frac{\mbox{d}}{\mbox{d}s}\sigma^1_u(s)-\frac{\mbox{d}}{\mbox{d}s}\sigma^1_u(s_0)}\leq \frac{1}{2}\abs{M}\leq\frac{1}{2}\abs{\frac{\mbox{d}}{\mbox{d}s}\sigma^1_u(s_0)},\\
&\abs{\frac{\mbox{d}}{\mbox{d}s}\eta(u|S^1_u(s))-\frac{\mbox{d}}{\mbox{d}s}\eta(u|S^1_u(s_0))}\leq \frac{1}{2}P\leq\frac{1}{2}\abs{\frac{\mbox{d}}{\mbox{d}s}\eta(u|S^1_u(s_0))}.
\end{aligned}
\end{equation}

From \eqref{RHS_entropy_dissipation_step1}, we get the estimates
\begin{equation}
\begin{aligned}\label{RHS_entropy_dissipation_step2}
&\frac{\mbox{d}}{\mbox{d}s}\sigma^1_u(s)=-\abs{\frac{\mbox{d}}{\mbox{d}s}\sigma^1_u(s)}\leq-\frac{1}{2}\abs{\frac{\mbox{d}}{\mbox{d}s}\sigma^1_u(s_0)},\\
&\frac{\mbox{d}}{\mbox{d}s}\eta(u|S^1_u(s))=\abs{\frac{\mbox{d}}{\mbox{d}s}\eta(u|S^1_u(s))}\geq\frac{1}{2}\abs{\frac{\mbox{d}}{\mbox{d}s}\eta(u|S^1_u(s_0))}.
\end{aligned}
\end{equation}

We use \eqref{dissipation_formula_2} and \eqref{RHS_entropy_dissipation_step2} to get for all $s$ with $\abs{s-s_0}<\delta_0$,
\begin{align}
q(S_u^1(s);S_u^1(s_0))-\sigma^1_u(s)\eta(S_u^1(s)|S_u^1(s_0))
&=
\int\limits_{s_0}^s \frac{\mbox{d}}{\mbox{d}t}\sigma_u^1(t)\Big(\eta(u|S_u^1(t))-\eta(u|S^1_u(s_0))\Big)\,dt\\
&\leq -\frac{1}{4}\abs{\frac{\mbox{d}}{\mbox{d}t}\sigma_u^1(s_0)}\frac{\mbox{d}}{\mbox{d}t}\eta(u|S_u^1(s_0))\int\limits_{s_0}^s (t-s_0)\,dt\\
&=-\frac{1}{8}\abs{\frac{\mbox{d}}{\mbox{d}t}\sigma_u^1(s_0)}\frac{\mbox{d}}{\mbox{d}t}\eta(u|S_u^1(s_0))\abs{s-s_0}^2.\end{align}

Note that due to \eqref{RHS_entropy_dissipation_step1},
\begin{align}
\abs{\frac{\mbox{d}}{\mbox{d}s}\sigma^1_u(s)}\leq\frac{3}{2}\abs{\frac{\mbox{d}}{\mbox{d}s}\sigma^1_u(s_0)}.
\end{align}
Thus,
\begin{align}
\abs{\sigma_u^1(s)-\sigma_u^1(s_0)}\leq\frac{3}{2}\abs{\frac{\mbox{d}}{\mbox{d}s}\sigma^1_u(s_0)}\abs{s-s_0},
\end{align}
which gives us that for all $s$ verifying $\abs{s-s_0}<\delta_0$,
\begin{align}
q(S_u^1(s);S_u^1(s_0))-\sigma_u^1(s)\eta(S_u^1(s)|S_u^1(s_0))\leq - k_1\abs{\sigma_u^1(s)-\sigma_u^1(s_0)}^2,
\end{align}
where we define
\begin{align}
k_1\coloneqq \frac{1}{18}P \inf_{s\in(0,B),\hspace{.05in}\abs{u}\leq B} \abs{\frac{\mbox{d}}{\mbox{d}s}\sigma^1_u(s)}^{-1}.
\end{align}
Note that $k_1$ only depends on $B$ and $\rho$.

On the other hand, we now show \eqref{entropy_lost_right_side_1_shock_inequalities} for $\abs{s-s_0}\geq\delta_0$. For all $s$ verifying $s\leq s_0-\delta_0$, we get from \eqref{dissipation_formula_2}
\begin{equation}
\begin{aligned}\label{k_2_step_1}
q(S^1_u(s);S^1_u(s_0))-\sigma^1_u(s)\eta(S^1_u(s)|S^1_u(s_0))&=\int\limits_{s}^{s_0-\delta_0}\frac{\mbox{d}}{\mbox{dt}}\sigma^1_u(t)\Big(\eta(u|S^1_u(s_0))-\eta(u|S^1_u(t))\Big)\,\mbox{d}t\\
&\hspace{.3in}+
\int\limits_{s_0-\delta_0}^{s_0}\frac{\mbox{d}}{\mbox{dt}}\sigma^1_u(t)\Big(\eta(u|S^1_u(s_0))-\eta(u|S^1_u(t))\Big)\,\mbox{d}t\\
&\coloneqq I_1+I_2.
\end{aligned}
\end{equation}

Note that for a positive constant $c_1$ satisfying
\begin{align}\label{c_1_system}
c_1\leq \inf_{s_0\in[\delta_0,B]\text{ and } \abs{u}\leq B} \Big(\eta(u|S^1_u(s_0))-\eta(u|S^1_u(s_0-\delta_0))\Big),
\end{align}
then we have (recalling Property (a) of hypothesis $(\mathcal{H}1)$)
\begin{equation}
\begin{aligned}\label{k_2_step_2}
I_1&\leq \int\limits_{s}^{s_0-\delta_0}\frac{\mbox{d}}{\mbox{dt}}\sigma^1_u(t)\Big(\eta(u|S^1_u(s_0))-\eta(u|S^1_u(s_0-\delta_0))\Big)\,\mbox{d}t\\
&\leq -c_1\abs{\sigma_u^1(s_0-\delta_0)-\sigma_u^1(s)}\\
&\leq -c_1\abs{\sigma_u^1(s_0)-\sigma_u^1(s)}+c_1\abs{\sigma_u^1(s_0)-\sigma_u^1(s_0-\delta_0)}\\
&\leq -c_1\abs{\sigma_u^1(s_0)-\sigma_u^1(s)}+c_1\delta_0\sup_{s\in(0,B),\hspace{.05in}\abs{u}\leq B} \abs{\frac{\mbox{d}}{\mbox{d}s}\sigma^1_u(s)}.
\end{aligned}
\end{equation}

Recall that $\delta_0$ depends only on $B$ and $\rho$. Thus, we can find a $c_1$ which satisfies \eqref{c_1_system}  and depends only on $B$ and $\rho$. In particular, note that 
\begin{align}\label{note_that_k2}
\delta_0 P \leq\inf_{s_0\in[\delta_0,B]\mbox{ and } \abs{u}\leq B} \Big(\eta(u|S^1_u(s_0))-\eta(u|S^1_u(s_0-\delta_0))\Big).
\end{align}
Note that  for $t\in(s_0-\delta_0,s_0)$,
\begin{align}
\eta(u|S^1_u(s_0))-\eta(u|S^1_u(t))&=\int\limits_t^{s_0} \frac{\mbox{d}}{\mbox{d}s}\eta(u|S^1_u(s))\,ds\\
&\geq P (s_0-t).
\end{align}
Thus,
\begin{equation}
\begin{aligned}\label{k_2_step_3}
I_2&\leq PM\int\limits_{s_0-\delta_0}^{s_0}(s_0-t)\,dt\\
&=\frac{\delta_0^2 PM}{2}.
\end{aligned}
\end{equation}
Recall $M<0$.

Pick 
\begin{align}
c_1\coloneqq -\delta_0 k_2,
\end{align}
where
\begin{align}
k_2\coloneqq \min\Bigg\{\frac{PM}{2\sup_{s\in(0,B),\hspace{.05in}\abs{u}\leq B} \abs{\frac{\mbox{d}}{\mbox{d}s}\sigma^1_u(s)}},P\Bigg\}.
\end{align}
Note that $k_2$ depends only on $B$ and $\rho$.

Then from \eqref{k_2_step_1},\eqref{note_that_k2}, \eqref{k_2_step_2}, and \eqref{k_2_step_3}, we get
\begin{align}
q(S^1_u(s);S^1_u(s_0))-\sigma^1_u(s)\eta(S^1_u(s)|S^1_u(s_0))\leq -\delta_0 k_2\abs{\sigma_u^1(s_0)-\sigma_u^1(s)}.
\end{align}

The case for $s>s_0+\delta_0$ is analogous to the case for $s\leq s_0-\delta_0$: For $s>s_0+\delta_0$, consider a constant $c_2>0$ such that
\begin{align}\label{c_2_system}
c_2\leq \inf_{s_0\in[\rho,B]\text{ and } \abs{u}\leq B} \Big(\eta(u|S^1_u(s_0+\delta_0))-\eta(u|S^1_u(s_0))\Big),
\end{align}
Note that $\delta_0$ only depends on $B$ and $\rho$. Thus, we can find a constant $c_2$ verifying \eqref{c_2_system} and depending only on $B$ and $\rho$. In particular, note that
\begin{align}\label{note_that_k3}
\delta_0 P \leq\inf_{s_0\in[\rho,B]\text{ and } \abs{u}\leq B} \Big(\eta(u|S^1_u(s_0+\delta_0))-\eta(u|S^1_u(s_0))\Big).
\end{align}

Then write (recalling \eqref{dissipation_formula_2}),
\begin{equation}
\begin{aligned}\label{k_3_step_1}
q(S^1_u(s);S^1_u(s_0))-\sigma^1_u(s)\eta(S^1_u(s)|S^1_u(s_0))&=\int\limits_{s_0}^{s_0+\delta_0}\frac{\mbox{d}}{\mbox{dt}}\sigma^1_u(t)\Big(\eta(u|S^1_u(t))-\eta(u|S^1_u(s_0))\Big)\,\mbox{d}t\\
&\hspace{.3in}+
\int\limits_{s_0+\delta_0}^{s}\frac{\mbox{d}}{\mbox{dt}}\sigma^1_u(t)\Big(\eta(u|S^1_u(t))-\eta(u|S^1_u(s_0))\Big)\,\mbox{d}t\\
&\coloneqq J_1+J_2.
\end{aligned}
\end{equation}

Then,
\begin{equation}
\begin{aligned}\label{k_3_step_2}
J_2&\leq \int\limits_{s_0+\delta_0}^{s}\frac{\mbox{d}}{\mbox{dt}}\sigma^1_u(t)\Big(\eta(u|S^1_u(s_0+\delta_0))-\eta(u|S^1_u(s_0))\Big)\,\mbox{d}t\\
&\leq c_2 \int\limits_{s_0+\delta_0}^{s}\frac{\mbox{d}}{\mbox{dt}}\sigma^1_u(t)\,\mbox{d}t
\\
\mbox{Then, by Property (a) of hypothesis $(\mathcal{H}1)$,}
\\
&=-c_2\abs{\sigma_u^1(s)-\sigma_u^1(s_0+\delta_0)}\\
&\leq -c_2\abs{\sigma_u^1(s)-\sigma_u^1(s_0)}+c_2\abs{\sigma_u^1(s_0+\delta_0)-\sigma_u^1(s_0)}\\
&\leq-c_2\abs{\sigma_u^1(s)-\sigma_u^1(s_0)}+ c_2\delta_0\sup_{s\in(0,B),\hspace{.05in}\abs{u}\leq B} \abs{\frac{\mbox{d}}{\mbox{d}s}\sigma^1_u(s)}.
\end{aligned}
\end{equation}

Note that  for $t\in(s_0,s_0+\delta_0)$,
\begin{align}
\eta(u|S^1_u(t))-\eta(u|S^1_u(s_0))&=\int\limits_{s_0}^{t} \frac{\mbox{d}}{\mbox{d}s}\eta(u|S^1_u(s))\,ds\\
&\geq P (t-s_0).
\end{align}
Thus,
\begin{equation}
\begin{aligned}\label{k_3_step_3}
J_1&\leq PM\int\limits_{s_0}^{s_0+\delta_0}(t-s_0)\,dt\\
&=\frac{\delta_0^2 PM}{2}.
\end{aligned}
\end{equation}
Recall $M<0$.

Pick 
\begin{align}
c_2\coloneqq -\delta_0 k_3,
\end{align}
where
\begin{align}
k_3\coloneqq \min\Bigg\{\frac{PM}{2\sup_{s\in(0,B),\hspace{.05in}\abs{u}\leq B} \abs{\frac{\mbox{d}}{\mbox{d}s}\sigma^1_u(s)}},P\Bigg\}.
\end{align}
Note that $k_3$ depends only on $B$ and $\rho$.

Then from \eqref{k_3_step_1},\eqref{note_that_k3}, \eqref{k_3_step_2}, and \eqref{k_3_step_3}, we get
\begin{align}
q(S^1_u(s);S^1_u(s_0))-\sigma^1_u(s)\eta(S^1_u(s)|S^1_u(s_0))\leq -\delta_0 k_3\abs{\sigma_u^1(s_0)-\sigma_u^1(s)}.
\end{align}

\begin{remark}
Note that in hypothesis $(\mathcal{H}1)$, we assume the 1-shock curve $S^1_u$ is parameterized by arc length. Thus, if $s<B$ then $\abs{S^1_u(s)}<B$.
\end{remark}

\subsection{Proof of \Cref{dissipation_negative_theorem}} This proof is based on the proof of Lemma 4.3 in \cite{MR3519973}.
\hfill\break

In what follows, we use $C$ to denote a generic constant which only depends on $B$ and $\rho$. 

Also, for convenience we define
\begin{align}
&u_R\coloneqq S^1_{u_L}(s_R)\\
&R_a\coloneqq \{u | \eta(u|u_L)\leq a\eta(u|u_R)\}.
\end{align}
 
\uline{Step 1} 
\hfill\break
We first need to show that for any fixed $\sigma_0\in\mathbb{R}$ such that $\lambda_1(u_L)>\sigma_0$, there exists $\beta,\epsilon_0>0$ such that
\begin{align}
\label{sigma_0_ineq}
-q(u;u_L)+\sigma_0\eta(u|u_L) \leq -\beta \eta(u|u_L),
\end{align}
for all $u\in B_{\epsilon_0}(u_L)$.

The difference between $\lambda_1(u_L)$ and $\sigma_0$ will power the proof of \eqref{dissipation_negative}. We will choose a $\sigma_0$ later. 

We use Taylor expansion to prove \eqref{sigma_0_ineq}:
\begin{align}
\label{taylor_expanse}
-q(u;u_L)+\sigma_0\eta(u|u_L) =(u-u_L)^T\nabla^2\eta(u_L)(\sigma_0I-\nabla f (u_L))(u-u_L)+\mathcal{O}(\abs{u-u_L}^3)
\end{align}

Due to the strict convexity of $\eta$, $\nabla^2\eta(u_L)$ is symmetric and strictly positive definite. Also, by assumption  $\nabla^2\eta(u_L) \nabla f(u_L)$ is symmetric. Thus these two matrices are diagonalizable in the same basis. We receive,
\begin{align}
\label{matrix_ordering}
\nabla^2\eta(u_L) \nabla f(u_L) \geq \lambda_1(u_L)\nabla^2\eta(u_L).
\end{align}

Let $C_1>0$ be a constant such that the term $\mathcal{O}(\abs{u-u_L}^3)$ in \eqref{taylor_expanse} satisfies $\mathcal{O}(\abs{u-u_L}^3)\leq C_1 \abs{u-u_L}^3$ for all $\abs{u_L}\leq B$ and all $u\in B_{1}(u_L)$. Note $C_1$ depends only on $B$. Let 
\begin{align}
C_2\coloneqq \inf_{\abs{x}=1\hspace{.07cm},\hspace{.07cm}\abs{u_L}\leq B}x^T\nabla^2\eta(u_L)x.
\end{align}
Note that because $\eta$ is strictly convex, $C_2>0$. Note $C_2$ depends only on $B$.

Then, for all 
\begin{align}
\epsilon_0<\min\{\frac{C_2}{2C_1}(\lambda_1(u_L)-\sigma_0),1\}
\end{align}
and for all $u\in B_{\epsilon_0}(u_L)$, we have from \eqref{matrix_ordering} and because $\lambda_1(u_L)>\sigma_0$, 
\begin{align}
-q(u;u_L)+\sigma_0\eta(u|u_L) &\leq -(\lambda_1(u_L)-\sigma_0)(u-u_L)^T\nabla^2\eta(u_L)(u-u_L)+\mathcal{O}(\abs{u-u_L}^3)\\
&\leq -\frac{(\lambda_1(u_L)-\sigma_0)}{2}(u-u_L)^T\nabla^2\eta(u_L)(u-u_L)\\
&\leq -C\frac{(\lambda_1(u_L)-\sigma_0)}{2}\eta(u|u_L)
\end{align}
by \Cref{entropy_relative_L2_control_system}.  This proves \eqref{sigma_0_ineq}, with 
\begin{align}
\label{beta_value}
\beta=C\frac{(\lambda_1(u_L)-\sigma_0)}{2}.
\end{align}

\uline{Step 2}
\hfill\break
We can now compute to show \eqref{dissipation_negative}.

In the context of \Cref{entropy_lost_right_side_1_shock}, we can use the same value of $B$ in \Cref{entropy_lost_right_side_1_shock} as in \Cref{dissipation_negative_theorem}. In \Cref{entropy_lost_right_side_1_shock}, we have constants $k$ and $\delta_0$. Note that these constants depend on $B$ and $\rho$. In the context of \Cref{entropy_lost_right_side_1_shock}, we are allowed to choose $\delta$ as long as it is sufficiently small. Choose 
\begin{align}\label{choose_delta}
\delta\coloneqq \min\{\delta_0,\frac{s_R}{2}\}
\end{align}
for the $\delta$ in \Cref{entropy_lost_right_side_1_shock}. Note that $\delta$ depends on $B$ and $\rho$.
Then, define 
\begin{align}
k^*\coloneqq \min\{\delta k,k\}.
\end{align}

Note that $k^*$ depends on $B$ and $\rho$.

Define the following quantities,
\begin{align}
M&\coloneqq \sup_{0\leq s \leq B\hspace{.07cm},\hspace{.1cm}\abs{u}\leq B+1} \frac{d}{ds}\sigma_u^1(s)\label{M_def},
\shortintertext{where the constant $M$ exists and satisfies $M<0$ because by the hypotheses $(\mathcal{H}1)$,  $(s,u)\mapsto \sigma_u^1(s)$ is $C^1$ and $\frac{d}{ds}\sigma_u^1(s)<0$.  We further define,}
L&\coloneqq \sup_{\abs{u}\leq B+1}\norm{\nabla \lambda_1},\\
\sigma_0&\coloneqq \lambda_1(u_L)+\frac{k^{*}M}{16C_3}\frac{s_R}{2}\label{sigma_0_def},
\end{align}
where $C_3$ will appear later, in \eqref{c_2_eq} -- and $C_3$ will depend on $B$.  The constant $L$ exists because by assumption the flux $f\in\ C^2(\mathcal{V})$ (see the remarks after the hypotheses $(\mathcal{H})$ and $(\mathcal{H})^*$). Note $M$ and $L$ depend only on $B$. 

We choose $\epsilon_0$ such that 
\begin{align}\label{condition_1_epsilon0}
\epsilon_0<\min\Bigg\{-\frac{k^{*}M}{16C_3}\frac{s_R}{2}\frac{1}{L},-\frac{C_2}{C_1}\frac{k^{*}M}{16C_3}\frac{s_R}{2}\frac{1}{L},-\frac{C_2}{C_1}\frac{k^{*}M}{16C_3}\frac{s_R}{2},1\Bigg\}.
\end{align}
Note the right hand side of \eqref{condition_1_epsilon0} depends on $B$ and $\rho$. We also need to make sure that $a_*$ is small enough such that $R_a\subset B_{\epsilon_0}(u_L)$ for all $0<a<a_*$. Recall \eqref{cond_a}.

We claim that for all $u\in B_{\epsilon_0}(u_L)$,
\begin{align}
\sigma_u^1(s)\leq \sigma_0, \hspace{.1in} \mbox{for } s\geq \frac{s_R}{2},\label{claim_1}
\end{align}
and
\begin{align}
\lambda_1(u)-\sigma_0\leq \frac{k^{*}}{8C_3}\abs{\sigma_u^1(\frac{s_R}{2})-\sigma_u^1(s_R)}.\label{claim_2}
\end{align}

We show \eqref{claim_1}: for $s\geq \frac{s_R}{2}$,
\begin{align}
\sigma_u^1(s) &\leq \sigma_u^1(0)+sM\\
&=\lambda_1(u)+sM\\
&\leq \lambda_1(u_L)-\frac{k^*M}{16C_3}\frac{s_R}{2}+sM\\
&=\lambda_1(u_L)+M(s-\frac{k^*}{16C_3}\frac{s_R}{2})\\
&\leq \lambda_1(u_L)+M(\frac{s_R}{2}-\frac{k^*}{16C_3}\frac{s_R}{2})\\
&= \lambda_1(u_L)+M\frac{s_R}{2}(1-\frac{k^*}{16C_3})\\
&< \sigma_0,
\end{align}
where to get the last inequality we can make $C_3$ larger if necessary such that $\frac{k^*}{16C_3}<\frac{1}{2}$, noting $C_3$ will then depend on $\rho$ and $B$.

We now show \eqref{claim_2}:
\begin{align}
\lambda_1(u)-\sigma_0 &\leq  \lambda_1(u_L)-\frac{k^*M}{16C_3}\frac{s_R}{2} -\sigma_0\\
&= \lambda_1(u_L)-\frac{k^*M}{16C_3}\frac{s_R}{2}-\lambda_1(u_L)-\frac{k^*M}{16C_3}\frac{s_R}{2}\\
&=-\frac{k^*M}{8C_3}\frac{s_R}{2} \leq \frac{k^*}{8C_3}\abs{\sigma_u^1(\frac{s_R}{2})-\sigma_u^1(s_R)},
\end{align}
by definition of $M$.

\vspace{.05in}
\textbf{To prove \eqref{dissipation_negative}, we consider two cases: $s\geq \frac{s_R}{2}$ and $s < \frac{s_R}{2}$.}
\vspace{.05in}

We first consider $s\geq \frac{s_R}{2}$. From \eqref{48_kang_vasseur_a_contraction}, we get

\begin{align}
q(&S^1_u(s);u_R)-\sigma^1_u(s)\eta(S^1_u(s)|u_R)
=
-\big(q(u_R;S_u^1(s_R))-\sigma^1_u(s)\eta(u_R|S^1_u(s_R))\big)\\
&+\big(q(S^1_u(s);S^1_u(s_R))-\sigma^1_u(s)\eta(S^1_u(s)|S^1_u(s_R))\big)\\
&+\big(\nabla\eta(u_R)-\nabla\eta(S^1_u(s_R))\big)\big(f(u_R)-f(S^1_u(s))-\sigma_u^1(s)(u_R-S^1_u(s))\big).
\end{align}

By using the Rankine-Hugoniot jump compatibility conditions
\begin{align}
f(u_R)-f(u_L)=\sigma_{u_L}^1(s_R)(u_R-u_L),\\
f(S^1_u(s))-f(u)=\sigma_u^1(s)(S^1_u(s)-u),
\end{align} 
we can rewrite

\begin{align}
q(&S^1_u(s);u_R)-\sigma^1_u(s)\eta(S^1_u(s)|u_R)
=
-\big(q(u_R;S_u^1(s_R))-\sigma^1_u(s)\eta(u_R|S^1_u(s_R))\big)\\
&+\big(q(S^1_u(s);S^1_u(s_R))-\sigma^1_u(s)\eta(S^1_u(s)|S^1_u(s_R))\big)\\
&+\big(\nabla\eta(u_R)-\nabla\eta(S^1_u(s_R))\big)\big(f(u_L)-f(u)-\sigma_u^1(s)(u_L-u)\\
&\hspace{.8in}+(\sigma_{u_L}^1(s_R)-\sigma_u^1(s))(u_R-u_L)\big)\\
&\coloneqq I_1+I_2+I_3.
\end{align}

To estimate $I_2$ and $I_3$, we use the following rough estimates. In these estimates, the constants are uniform in $u_L$ (with $\abs{u_L}\leq B$) and $s_R\in[\rho,B]$. The estimates hold for any $u\in B_{\epsilon_0}(u_L)$ (recall by \eqref{condition_1_epsilon0}, $\epsilon_0<1$).
Recall that by the hypothesis $(\mathcal{H}1) $, $(s,u)\mapsto S_u^1(s)$ is $C^1$. Then, 
\begin{equation}
\begin{aligned}\label{estimates}
\abs{\eta(S^1_{u_L}(s_R)|S^1_{u}(s_R))}&\leq C \abs{S^1_{u_L}(s_R)-S^1_{u}(s_R)}^2\leq C \abs{u_L-u}^2,
\\
&\hspace{-1.7in}\mbox{because $\eta\in C^2$ and by \Cref{entropy_relative_L2_control_system}, $\eta(a|b)$ is locally quadratic in $a-b$. Continuing,}
\\
\abs{(q(S^1_{u_L}(s_R);S^1_{u}(s_R))}&\leq C \abs{S^1_{u_L}(s_R)-S^1_{u}(s_R)}^2\leq C \abs{u_L-u}^2,
\\
&\hspace{-1.7in}\mbox{because $q\in C^2$ and $q(a;b)$ is locally quadratic in $a-b$. Further,}
\\
\abs{\nabla\eta(S^1_{u_L}(s_R))-\nabla\eta(S^1_{u}(s_R))}&\leq C \abs{S^1_{u_L}(s_R)-S^1_{u}(s_R)}\leq C \abs{u_L-u},
\\
&\hspace{-1.7in}\mbox{because $\eta\in C^2(\mathcal{V})$. Lastly,}
\\ 
\abs{\sigma^1_{u_L}(s_R)-\sigma_u^1(s_R)}&\leq C \abs{u_L-u},
\\
&\hspace{-1.7in}\mbox{because by the hypothesis $(\mathcal{H}1) $,  $(s,u)\mapsto \sigma_u^1(s)$ is $C^1$.}
\end{aligned}
\end{equation}

Then, from the estimates \eqref{estimates}, we get
\begin{equation}
\begin{aligned}\label{I_1_systems_estimate}
I_1&=-q(u_R;S_u^1(s_R))+\sigma^1_u(s)\eta(S^1_{u_L}(s_R)|S^1_u(s_R))\\
&=-q(u_R;S_u^1(s_R))+\sigma^1_u(s_R)\eta(S^1_{u_L}(s_R)|S^1_u(s_R))+(\sigma^1_u(s)-\sigma^1_u(s_R))\eta(S^1_{u_L}(s_R)|S^1_u(s_R))\\
&\leq C\abs{u_L-u}^2(1+\abs{\sigma^1_u(s)-\sigma^1_u(s_R)}),
\end{aligned}
\end{equation}
and
\begin{equation}
\begin{aligned}\label{I_3_systems_estimate}
I_3&=\big(\nabla\eta(u_R)-\nabla\eta(S^1_u(s_R))\big)\big(f(u_L)-f(u)-\sigma_u^1(s)(u_L-u)\\&\hspace{.8in}+(\sigma_{u_L}^1(s_R)-\sigma_u^1(s))(u_R-u_L)\big)\\
&\leq C\abs{u_L-u}(\abs{u_L-u}+\abs{\sigma^1_u(s)-\sigma^1_u(s_R)}\abs{u_L-u}+\abs{\sigma^1_u(s)-\sigma^1_u(s_R)}).
\end{aligned}
\end{equation}

To control $I_2$, we use \Cref{entropy_lost_right_side_1_shock}. Note first that

\begin{equation}
\begin{aligned}\label{estimates1}
\abs{\sigma_u^1(s)-\sigma_{u_L}^1(s_R)}^2&\leq \Big(\abs{\sigma_u^1(s)-\sigma_{u}^1(s_R)}+\abs{\sigma_u^1(s_R)-\sigma_{u_L}^1(s_R)}\Big)^2\\
&\leq \Big(\abs{\sigma_u^1(s)-\sigma_{u}^1(s_R)}+C\abs{u-u_L}\Big)^2\\
&=\abs{\sigma_u^1(s)-\sigma_{u}^1(s_R)}^2+2C\abs{u-u_L}\abs{\sigma_u^1(s)-\sigma_{u}^1(s_R)}+C^2\abs{u-u_L}^2.
\end{aligned}
\end{equation}

Then, for $\abs{s-s_R}<\delta$ we use \Cref{entropy_lost_right_side_1_shock} and \eqref{estimates1} above:
\begin{align}
I_2&=q(S^1_u(s);S^1_u(s_R))-\sigma^1_u(s)\eta(S^1_u(s)|S^1_u(s_R))\\
&\leq -k^* \abs{\sigma_u^1(s)-\sigma_u^1(s_R)}^2\\
&=-\frac{k^*}{2} \abs{\sigma_u^1(s)-\sigma_u^1(s_R)}^2-\frac{k^*}{2} \abs{\sigma_u^1(s)-\sigma_u^1(s_R)}^2\\
&\leq -\frac{k^*}{2} \abs{\sigma_u^1(s)-\sigma_u^1(s_R)}^2 -\frac{k^*}{2}\abs{\sigma_u^1(s)-\sigma_{u_L}^1(s_R)}^2 \\
&\hspace{.5in}+Ck^*\abs{u-u_L}\abs{\sigma_u^1(s)-\sigma_u^1(s_R)}+\frac{k^*}{2}C^2\abs{u-u_L}^2\\
&= -\frac{k^*}{2} \abs{\sigma_u^1(s)-\sigma_u^1(s_R)}^2 -\frac{k^*}{2}\abs{\sigma_u^1(s)-\sigma_{u_L}^1(s_R)}^2 \\
&\hspace{.5in}+C\abs{u-u_L}\abs{\sigma_u^1(s)-\sigma_u^1(s_R)}+C\abs{u-u_L}^2,
\end{align}
where in the last equality we just absorb some constants into the $C$.
 
Then, if $\abs{s-s_R}<\delta$, we use our estimates on $I_1, I_2$, and $I_3$ to get
\begin{equation}
\begin{aligned}\label{estimates2}
q(S^1_u(s);u_R)-&\sigma^1_u(s)\eta(S^1_u(s)|u_R)
\leq 
 -\frac{k^*}{2} \abs{\sigma_u^1(s)-\sigma_u^1(s_R)}^2 -\frac{k^*}{2}\abs{\sigma_u^1(s)-\sigma_{u_L}^1(s_R)}^2 \\&+C\abs{u-u_L}\abs{\sigma_u^1(s)-\sigma_u^1(s_R)}+C\abs{u-u_L}^2\abs{\sigma_u^1(s)-\sigma_u^1(s_R)}+C\abs{u-u_L}^2\\
 &\leq -\frac{k^*}{2}\abs{\sigma_u^1(s)-\sigma_{u_L}^1(s_R)}^2 +C(\abs{u-u_L}^2+\abs{u-u_L}^4),
\\
&\hspace{-1.04in}\mbox{where we have used the version of Young's inequality with $\epsilon$. Continuing,}
\\
 &\leq  -\frac{k^*}{2}\abs{\sigma_u^1(s)-\sigma_{u_L}^1(s_R)}^2 +C\abs{u-u_L}^2,
 \end{aligned}
\end{equation}
because $u\in B_{\epsilon_0}(u_L)$ and by \eqref{condition_1_epsilon0}, $\epsilon_0<1$.

Thus, putting everything together, we have for $s\geq\frac{s_R}{2}$ and $\abs{s-s_R}<\delta$,
\begin{align}
a&\big(q(S^1_u(s);u_R)-\sigma^1_u(s)\eta(S^1_u(s)|u_R)\big)-q(u;u_L)+\sigma^1_u(s)\eta(u|u_L)\\
&\leq  aC\abs{u-u_L}^2 -\frac{ak^*}{2}\abs{\sigma_u^1(s)-\sigma_{u_L}^1(s_R)}^2 -q(u;u_L)+\sigma_0\eta(u|u_L),
\shortintertext{by \eqref{estimates2} and \eqref{claim_1}. Continuing,}
&\leq aC\abs{u-u_L}^2 -\frac{ak^*}{2}\abs{\sigma_u^1(s)-\sigma_{u_L}^1(s_R)}^2 -\beta\eta(u|u_L),
\end{align}
by \eqref{sigma_0_ineq}. We recall \Cref{entropy_relative_L2_control_system}, and choose $a_*$ small enough such that $aC\abs{u-u_L}^2- \beta\eta(u|u_L)\leq 0$ for all $u$. As always, we also require that $a_*$ is small enough such that $R_a\subset B_{\epsilon_0}(u_L)$ for all $0<a<a_*$ (recall the condition \eqref{cond_a}).
This proves \eqref{dissipation_negative}.

When $s\geq \frac{s_R}{2}$ and $\abs{s-s_R}>\delta$, using \Cref{entropy_lost_right_side_1_shock} and our estimates on $I_1$ and $I_3$ \eqref{I_1_systems_estimate} and \eqref{I_3_systems_estimate},

\begin{equation}
\begin{aligned}\label{estimates1_redo_control}
q(S^1_u(s);u_R)-&\sigma^1_u(s)\eta(S^1_u(s)|u_R)
\leq 
-k^* \abs{\sigma_u^1(s)-\sigma_u^1(s_R)} +C\abs{u-u_L}\abs{\sigma_u^1(s)-\sigma_u^1(s_R)}\\&+C\abs{u-u_L}^2\abs{\sigma_u^1(s)-\sigma_u^1(s_R)}+C\abs{u-u_L}^2\\
&= -\frac{k^*}{2} \abs{\sigma_u^1(s)-\sigma_u^1(s_R)} -\frac{k^*}{2}\abs{\sigma_u^1(s)-\sigma_{u}^1(s_R)} \\&+C\abs{u-u_L}\abs{\sigma_u^1(s)-\sigma_u^1(s_R)}+C\abs{u-u_L}^2\abs{\sigma_u^1(s)-\sigma_u^1(s_R)}+C\abs{u-u_L}^2\\
 &\leq-\frac{k^*}{2} \abs{\sigma_u^1(s)-\sigma_u^1(s_R)} +C\abs{u-u_L}^2,
\end{aligned}
\end{equation}
because $u\in B_{\epsilon_0}(u_L)$ and we pick $\epsilon_0$ even smaller such that $\epsilon_0<\min\{\frac{k^*}{4C},1\}$. Recall we require that $a_*$ is small enough such that $R_a\subset B_{\epsilon_0}(u_L)$ for all $0<a<a_*$ (see \eqref{cond_a}).

Putting everything together, for $s \geq \frac{s_R}{2}$ and $\abs{s-s_R}>\delta$,
\begin{align}
a&\big(q(S^1_u(s);u_R)-\sigma^1_u(s)\eta(S^1_u(s)|u_R)\big)-q(u;u_L)+\sigma^1_u(s)\eta(u|u_L)\\
&\leq  aC\abs{u-u_L}^2 -\frac{ak^*}{2}\abs{\sigma_u^1(s)-\sigma_{u}^1(s_R)} -q(u;u_L)+\sigma_0\eta(u|u_L)
\shortintertext{by \eqref{estimates1_redo_control} and \eqref{claim_1}. Continuing,}
&\leq aC\abs{u-u_L}^2 -\frac{ak^*}{2}\abs{\sigma_u^1(s)-\sigma_{u}^1(s_R)} -\beta\eta(u|u_L)
\end{align}
by \eqref{sigma_0_ineq}. We again recall \Cref{entropy_relative_L2_control_system}, and choose $a_*$ small enough such that $aC\abs{u-u_L}^2- \beta\eta(u|u_L)\leq 0$ for all $u$. Recall, we always require that $a_*$ is small enough such that $R_a\subset B_{\epsilon_0}(u_L)$ for all $0<a<a_*$ (use condition \eqref{cond_a}).
Again note that with $\sigma_0$ defined in \eqref{sigma_0_def} and $\beta$ defined in \eqref{beta_value}, $\beta=Cs_R$. Finally, we get the right hand side of \eqref{dissipation_negative} by noting that $\abs{\sigma_u^1(s)-\sigma_{u}^1(s_R)}$ will be uniformly bounded from below for all $\abs{s-s_R}>\delta$ (with $s\in[0,B]$ and $s_R\in[\rho,B]$), because by Property (a) of $(\mathcal{H}1)$, $\frac{d}{ds}\sigma^1_u(s)<0$. Furthermore, the term $\abs{\sigma^1_{u_L}(s_R)-\sigma^1_u(s)}^2$ on the right hand side of \eqref{dissipation_negative} is bounded (with the bound depending on $B$). Thus, by making $c_1$ sufficiently small, this proves \eqref{dissipation_negative}. Recall also that $\delta$ depends on $B$ and $\rho$. Thus, $c_1$ depends on $B$ and $\rho$.

On the other hand, we now consider $s< \frac{s_R}{2}$. From \eqref{choose_delta}, we have $\delta<\frac{s_R}{2}$. Thus when $s< \frac{s_R}{2}$, $\abs{s-s_R}>\delta$.

The computations in \eqref{estimates1_redo_control} apply exactly. We get again,

\begin{align}
q(S^1_u(s);u_R)-\sigma^1_u(s)\eta(S^1_u(s)|u_R)
\leq -\frac{k^*}{2} \abs{\sigma_u^1(s)-\sigma_u^1(s_R)} +C\abs{u-u_L}^2,
\end{align}
again because $u\in B_{\epsilon_0}(u_L)$ and $\epsilon_0$ verifies $\epsilon_0<\frac{k^*}{4C}$. 

Then, because by the assumptions $(\mathcal{H})$ $\frac{d}{ds}\sigma^1_u(s)<0$, we have for all $s<\frac{s_R}{2}$,
\begin{align}
q(S^1_u(s);u_R)-\sigma^1_u(s)\eta(S^1_u(s)|u_R)
&\leq -\frac{k^*}{2} \abs{\sigma_u^1(\frac{s_R}{2})-\sigma_u^1(s_R)} +C\abs{u-u_L}^2
\shortintertext{Then, for $\epsilon_0$ small enough such that $C\epsilon_0^2\leq\frac{k^*Ms_R}{8}$ (where $M$ is from \eqref{M_def}),}
&\leq -\frac{k^*}{4} \abs{\sigma_u^1(\frac{s_R}{2})-\sigma_u^1(s_R)}. 
\end{align}
Recall we also need $a_*$ sufficiently small so that $R_a\subset B_{\epsilon_0}(u_L)$ for all $0<a<a_*$.  See \eqref{cond_a}. 

To control the left hand side of the entropy dissipation in \eqref{dissipation_negative}, we estimate
\begin{equation}
\begin{aligned}\label{c_2_eq}
-q(u;u_L)+\sigma^1_u(s)\eta(u|u_L)&\leq -q(u;u_L)+\lambda_1(u)\eta(u|u_L),
\\
&\hspace{-1.55in}\mbox{because by the assumptions $(\mathcal{H})$ $\frac{d}{ds}\sigma^1_u(s)<0$ and $\sigma^1_u(0)=\lambda_1(u)$. Continuing, }
\\
&= -q(u;u_L)+\sigma_0\eta(u|u_L)+(\lambda_1(u)-\sigma_0)\eta(u|u_L)\\
&\leq (\lambda_1(u)-\sigma_0)\eta(u|u_L),
\\
&\hspace{-1.55in}\mbox{by \eqref{sigma_0_ineq}. Continuing, }
\\
&\leq a(\lambda_1(u)-\sigma_0)\eta(u|u_R),
\\
&\hspace{-1.55in}\mbox{because $u\in R_a\subset B_{\epsilon_0}(u_L)$. Continuing, recall $\epsilon_0<1$ by \eqref{condition_1_epsilon0}. Furthermore,}\\ 
&\hspace{-1.55in}\mbox{recall \Cref{entropy_relative_L2_control_system}, $\abs{u_L}\leq B$, $s_R\leq B$, and $S^1_{u_L}$ is parameterized by arc length.}\\ 
&\hspace{-1.55in}\mbox{Then, we get\hspace{5in} }
\\
&\leq aC_3(\lambda_1(u)-\sigma_0).
\end{aligned}
\end{equation}
Note $C_3$ is a constant which depends on $B$.

Putting everything together, for all $s<\frac{s_R}{2}$, 
\begin{equation}
\begin{aligned}\label{ending_computation}
a&\big(q(S^1_u(s);u_R)-\sigma^1_u(s)\eta(S^1_u(s)|u_R)\big)-q(u;u_L)+\sigma^1_u(s)\eta(u|u_L)\\
&\leq -a\Big(\frac{k^*}{4} \abs{\sigma_u^1(\frac{s_R}{2})-\sigma_u^1(s_R)}-C_3(\lambda_1(u)-\sigma_0)\Big)\\
&\leq-a\Big(\frac{k^*}{8} \abs{\sigma_u^1(\frac{s_R}{2})-\sigma_u^1(s_R)}\Big),
\\
\mbox{by \eqref{claim_2}. Continuing,}
\\
&\leq \frac{aMk^* s_R}{16},
\end{aligned}
\end{equation}
where $M$ is from \eqref{M_def}. Recall $M<0$.

Note that the term $\abs{\sigma^1_{u_L}(s_R)-\sigma^1_u(s)}^2$ on the right hand side of \eqref{dissipation_negative} is bounded (with the bound depending on $B$), so we get the right hand side of \eqref{dissipation_negative} by making $c_1$ smaller if necessary. Note that in making this adjustment to $c_1$, $c_1$ will depend on $B$ and $\rho$. This proves \eqref{dissipation_negative}.

Lastly, we get \eqref{dissipation_negative_boundary_of_convex_set} by the same computation \eqref{ending_computation} and taking $s=0$. Recall that by the hypothesis $(\mathcal{H}1)$, $\sigma^1_u(0)=\lambda_1(u)$.

\subsection{Proof of \Cref{systems_entropy_dissipation_room}}
By the remark about taking the negative of the flux ($-f$) if necessary, we can assume that $(\bar{u}_+(t),\bar{u}_-(t),\dot{s}(t))$ is a 1-shock. 

We will use \Cref{dissipation_negative_theorem}. The 1-shock $(\bar{u}_+(t),\bar{u}_-(t),\dot{s}(t))$ in \Cref{systems_entropy_dissipation_room} will play the role of $(u_L,S^1_{u_L}(s_R))$ in \Cref{dissipation_negative_theorem}. Take $R\coloneqq \max\{\norm{u}_{L^\infty},\norm{\bar{u}_-(\cdot)}_{L^\infty([0,T))}\}$ and then take the $\tilde{S}$ corresponding to this $R$ as in Property (c) of $(\mathcal{H}1)$. Define the $B$ in \Cref{dissipation_negative_theorem} to be $B\coloneqq \max\{R,\tilde{S},\norm{\bar{u}_+(\cdot)}_{L^\infty([0,T))}\}$. Then, we have that for all $(u_-,u_+,\sigma)$ 1-shock with $u_+,u_- < R$, there exists $s\in(0,B)$ such that $u_+=S^1_{u_-}(s)$. Further, note that $B$ depends on $\norm{u}_{L^\infty}$ and $\norm{\bar{u}_-(\cdot)}_{L^\infty([0,T))}$.

Then,  pick $0<a<1$ as in \Cref{dissipation_negative_theorem}. Here, $a$ is playing the same role as the $a$ in \Cref{dissipation_negative_theorem}. 

Throughout this proof, $c$ denotes a generic constant that depends on $\norm{u}_{L^\infty}$, $\rho$, $\norm{\bar{u}_+(\cdot)}_{L^\infty([0,T))}$, $\norm{\bar{u}_-(\cdot)}_{L^\infty([0,T))}$, and $a$.

Note by \Cref{dissipation_negative_theorem}, the constant $a$ depends on $\norm{u}_{L^\infty}$, $\norm{\bar{u}_-(\cdot)}_{L^\infty([0,T))}$,$\norm{\bar{u}_+(\cdot)}_{L^\infty([0,T))}$, and $\rho$.

\uline{Step 1}

We now show that for any $\gamma_0>0$,

\begin{align}\label{bound_on_inf}
\inf \eta(u|u_L)- a\eta(u|u_R) \geq c_4\gamma_0^2
\end{align}
for a constant $c_4>0$, where the infimum runs over all $(u,u_L,u_R)$ such that $\mbox{dist}(u,\{w|\eta(w|u_L)\leq a\eta(w|u_R)\})\geq \gamma_0$ and $\abs{u_L},\abs{u_R}\leq B$. Here, $B$ is from \Cref{dissipation_negative_theorem} and the distance $\mbox{dist}(x,A)$ between a point $x$ and a set $A$ is defined in the usual way,
\begin{align}
\mbox{dist}(x,A) \coloneqq \inf_{y\in A} \abs{x-y}.
\end{align}

Consider any triple $(u,u_L,u_R)$ such that  $\mbox{dist}(u,\{w|\eta(w|u_L)\leq a\eta(w|u_R)\})\geq \gamma_0$ and $\abs{u_L},\abs{u_R}\leq B$. 

By \Cref{dissipation_negative_theorem}, the set $\{w|\eta(w|u_L)\leq a\eta(w|u_R)\}$ is compact. Thus, there exists $w_0\in\{w|\eta(w|u_L)\leq a\eta(w|u_R)\}$ such that
\begin{align}
\abs{u-w_0}= \mbox{dist}(u,\{w|\eta(w|u_L)\leq a\eta(w|u_R)\}).
\end{align}

We Taylor expand the function 
\begin{align}
\Gamma(u)\coloneqq \eta(u|u_L)- a\eta(u|u_R)
\end{align}
around the point $w_0$:

\begin{align}
\Gamma(u)=\Gamma(w_0)+\nabla\Gamma(w_0)(u-w_0)+\int\limits_0^1 (1-t)(u-w_0)^{T}\nabla^2\Gamma(w_0+t(u-w_0))(u-w_0)\,dt.
\end{align}

By definition of $w_0$, we must have $\Gamma(w_0)=0$ and $\nabla\Gamma(w_0)(u-w_0)\geq0$.

Note that $\nabla^2\Gamma=(1-a)\nabla^2\eta$. Thus, by strict convexity of $\eta$ and because $0<a<1$, we have $\nabla^2\Gamma\geq cI$ for some constant $c>0$.

We then calculate,
\begin{align}
&\int\limits_0^1 (1-t)(u-w_0)^{T}\nabla^2\Gamma(w_0+t(u-w_0))(u-w_0)\,dt\\
&\geq \int\limits_0^{.5} (1-t)(u-w_0)^{T}\nabla^2\Gamma(w_0+t(u-w_0))(u-w_0)\,dt,
\shortintertext{where we have changed the limits of integration. Continuing,}
&\geq .5 c \abs{u-w_0}^2 \geq .5 c \gamma_0^2,
\end{align}
where the last inequality comes from $\mbox{dist}(u,\{w|\eta(w|u_L)\leq a\eta(w|u_R)\})\geq \gamma_0$. This proves \eqref{bound_on_inf}. 

We choose 
\begin{align}\label{epsilon_0_def}
\gamma_0\coloneqq \frac{c_1}{2L_*},
\end{align}
where $c_1$ is from \Cref{dissipation_negative_theorem} and  $L_*$ is the Lipschitz constant of the map
\begin{align}
(u,u_L,u_R)\mapsto a\big(q(u;u_R)-\lambda_1(u)\eta(u|u_R)\big)-q(u;u_L)+\lambda_1(u)\eta(u|u_L).
\end{align}

\uline{Step 2}

Define
\begin{align}\label{V_def_systems}
V(u,t)\coloneqq \lambda_{1}(u)-C_*\mathbbm{1}_{\{u|a\eta(u|\bar{u}_+(t))<\eta(u|\bar{u}_-(t))\}}(u),
\end{align}
where $C_*>0$ is a large constant, which we can pick to be
\begin{align}\label{C_star_def}
C_*\coloneqq \frac{1}{c_4\gamma_0^2}\Bigg(\sup_{u,u_L,u_R\in B_{B}(0)}\abs{aq(u;u_R)-q(u;u_L)}+1\Bigg) + 2\sup_{u\in B_{B}(0)}\abs{\lambda_1(u)},
\end{align}
where $c_4$ is from \eqref{bound_on_inf}.

We solve the following ODE in the sense of Filippov flows,
\begin{align}
  \begin{cases}\label{ODE}
   \dot{h}(t)=V(u(h(t),t),t)\\
   h(0)=s(0),
  \end{cases}
\end{align}

The existence of such an $h$ comes from the following lemma,
\begin{lemma}[Existence of Filippov flows]\label{Filippov_existence}
Let $V(u,t):\mathbb{R}^n \times [0,\infty)\to\mathbb{R}$ be bounded on $\mathbb{R}^n \times [0,\infty)$, upper semi-continuous in $u$, and measurable in $t$. Let $u$ be a bounded, weak solution to \eqref{system}, entropic for the entropy $\eta$. Assume also that $u$ verifies the strong trace property (\Cref{strong_trace_definition}). Let $x_0\in\mathbb{R}$. Then we can solve 
\begin{align}
  \begin{cases}\label{ODE}
   \dot{h}(t)=V(u(h(t),t),t)\\
   h(0)=x_0,
  \end{cases}
\end{align}
in the Filippov sense. That is, there exists a Lipschitz function $h:[0,\infty)\to\mathbb{R}$ such that
\begin{align}
\mbox{Lip}[h]\leq \norm{V}_{L^\infty},\label{fact1}\\
h(0)=x_0,\label{fact2}
\shortintertext{and}
\dot{h}(t)\in I[V(u_+,t),V(u_-,t)],\label{fact3}
\end{align}
for almost every $t$, where $u_\pm\coloneqq u(h(t)\pm,t)$ and $I[a,b]$ denotes the closed interval with endpoints $a$ and $b$. 

Moreover, for almost every $t$,
\begin{align}
f(u_+)-f(u_-)=\dot{h}(u_+-u_-),\label{fact4}\\
q(u_+)-q(u_-)\leq\dot{h}(\eta(u_+)-\eta(u_-)),\label{fact5}
\end{align}
which means that for almost every $t$, either $(u_+,u_-,\dot{h})$ is an entropic shock (for $\eta$) or $u_+=u_-$.
\end{lemma}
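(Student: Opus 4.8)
The plan is to construct $h$ as a limit of flows of mollified velocity fields, to identify the limit as a Filippov solution using the strong trace property, and finally to read \eqref{fact4} and \eqref{fact5} off the weak and entropy formulations of the conservation law.

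First I would set $g(x,t)\coloneqq V(u(x,t),t)$; this is bounded, with $\norm{g}_{L^\infty}\leq\norm{V}_{L^\infty}$, and measurable in $(x,t)$ (the map $V$ is Borel in $u$ and measurable in $t$, hence superpositionally measurable). For $\epsilon>0$ let $g^\epsilon(\,\cdot\,,t)$ be a spatial mollification of $g(\,\cdot\,,t)$, so that $g^\epsilon$ is continuous in $x$, measurable in $t$, and still bounded by $\norm{V}_{L^\infty}$. By the Carath\'eodory existence theorem there is an absolutely continuous $h^\epsilon$ with $\dot h^\epsilon(t)=g^\epsilon(h^\epsilon(t),t)$ and $h^\epsilon(0)=x_0$, and $\mathrm{Lip}[h^\epsilon]\leq\norm{V}_{L^\infty}$ uniformly in $\epsilon$. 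By Arzel\`a--Ascoli a subsequence converges locally uniformly to a Lipschitz $h$ with $\mathrm{Lip}[h]\leq\norm{V}_{L^\infty}$ and $h(0)=x_0$, which gives \eqref{fact1} and \eqref{fact2}.

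The core step is \eqref{fact3}. Fix a point $t$ at which $\dot h(t)$ exists and at which the strong traces $u_\pm(t)$ of $u$ along the Lipschitz curve $h$ exist (a.e.\ $t$, by \Cref{strong_trace_definition}). For $y$ in a short one-sided neighbourhood of $h(t)$ the essential range of $u(\,\cdot\,,t)$ clusters near $u_+(t)$ on the right and near $u_-(t)$ on the left; combining this with the upper semicontinuity of $V$ and passing to the limit $\epsilon\to0$ in the identity $h^\epsilon(t_2)-h^\epsilon(t_1)=\int_{t_1}^{t_2}g^\epsilon(h^\epsilon(\tau),\tau)\,d\tau$ shows that $\dot h(t)$ lies in the closed interval with endpoints $V(u_+(t),t)$ and $V(u_-(t),t)$, which is \eqref{fact3}. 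This is precisely the construction of a Filippov flow along a weak solution enjoying strong traces, carried out in \cite{Leger2011} (see also \cite{scalar_move_entire_solution,MR3519973}). The hard part of the whole argument is exactly here: controlling the Filippov set-valued map of the discontinuous field $(x,t)\mapsto V(u(x,t),t)$ by the two one-sided traces is where both the strong trace hypothesis and the semicontinuity of $V$ are genuinely used; the existence portion above is routine Carath\'eodory/Arzel\`a--Ascoli.

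Finally, \eqref{fact4} and \eqref{fact5} hold for any Lipschitz curve along which $u$ has strong traces, by localizing the weak formulation \eqref{u_solves_equation_integral_formulation} and the entropy inequality \eqref{u_entropy_integral_formulation} at $x=h(t)$. Since $\partial_t u+\partial_x f(u)=G(u(\cdot,t))$ and $G(u(\cdot,t))\in L^\infty$ by \eqref{G_acts_like_2}, the space-time field $(f(u),u)$ has bounded divergence and no singular part on the curve; applying the Gauss--Green formula on thin regions straddling $\{x=h(t)\}$ and letting their width tend to $0$ forces the normal trace to be continuous across the curve, i.e.\ $f(u_+)-f(u_-)=\dot h(u_+-u_-)$, which is \eqref{fact4}. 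For the entropy, $\partial_t\eta(u)+\partial_x q(u)=\nabla\eta(u)G(u(\cdot,t))-\mu$ for a nonnegative measure $\mu$, so the same computation permits the normal trace of $(q(u),\eta(u))$ to jump across the curve only by the (nonpositive, with the orientation going from the left to the right trace) part of $\mu$ carried by the curve; this yields $q(u_+)-q(u_-)\leq\dot h(\eta(u_+)-\eta(u_-))$, which is \eqref{fact5}. Combined with the Rankine--Hugoniot relation \eqref{RH_jump_condition} and the entropic shock condition \eqref{entropic_shock_condition_system}, \eqref{fact4}--\eqref{fact5} give the stated dichotomy: for a.e.\ $t$, either $u_+=u_-$, or $(u_+,u_-,\dot h)$ is an entropic Rankine--Hugoniot discontinuity.
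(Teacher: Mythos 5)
Your proposal is correct and follows essentially the same route as the paper's appendix proof: mollify $V(u(\cdot,t),t)$ in $x$, solve by Carath\'eodory, pass to a Lipschitz limit by Arzel\`a--Ascoli, and then combine the strong trace property with the upper semicontinuity of $V$ (via the $[\,\cdot\,]_+$ estimates and weak* convergence of $\dot h_n$) to confine $\dot h$ to $I[V(u_+,t),V(u_-,t)]$. Your Gauss--Green sketch for \eqref{fact4}--\eqref{fact5} is also sound, though the paper simply cites Lemma 6 of \cite{Leger2011} for those facts rather than reproving them.
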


The proof of \eqref{fact1}, \eqref{fact2}, and \eqref{fact3} is very similar to the proof of Proposition 1 in \cite{Leger2011}. A proof of \eqref{fact1}, \eqref{fact2}, and \eqref{fact3} is included in \Cref{Filippov_existence_section} for the reader's convenience.

It is well known that \eqref{fact4} and \eqref{fact5} are true for any Lipschitz continuous function $h:[0,\infty)\to\mathbb{R}$ when $u$ is BV. When instead $u$ is only known to have strong traces (\Cref{strong_trace_definition}), then \eqref{fact4} and \eqref{fact5} are given in Lemma 6 in \cite{Leger2011}. We do not prove \eqref{fact4} and \eqref{fact5}  here; their proof is in the appendix in \cite{Leger2011}.

Note that $V$ (see \eqref{V_def_systems}) is upper semi-continuous in $u$ because indicator functions of open sets are lower semi-continuous and the negative of a lower semi-continuous function is upper semi-continuous.

\uline{Step 3}

Let  $u_{\pm}\coloneqq u(u(h(t)\pm,t)$.

Note that by \Cref{Filippov_existence}, 
\begin{align}\label{where_h_lives}
\dot{h}(t)\in I \Bigg[\lambda_{1}(u_+)-C_*\mathbbm{1}_{\{u|a\eta(u|\bar{u}_+(t))<\eta(u|\bar{u}_-(t))\}}(u_+),\\
\lambda_{1}(u_-)-C_*\mathbbm{1}_{\{u|a\eta(u|\bar{u}_+(t))<\eta(u|\bar{u}_-(t))\}}(u_-)\Bigg].
\end{align}

We are now ready to show \eqref{dissipation_negative_claim}. 

For each fixed time $t$, we have 4 cases to consider to prove \eqref{dissipation_negative_claim}:\newline
\emph{Case 1}
\begin{align}
a\eta(u_-|\bar{u}_+(t))<\eta(u_-|\bar{u}_-(t)),\\
a\eta(u_+|\bar{u}_+(t))<\eta(u_+|\bar{u}_-(t)).
\end{align}
\emph{Case 2}
\begin{align}
a\eta(u_-|\bar{u}_+(t))<\eta(u_-|\bar{u}_-(t)),\\
a\eta(u_+|\bar{u}_+(t))\geq\eta(u_+|\bar{u}_-(t)).
\end{align}
\emph{Case 3}
\begin{align}
a\eta(u_-|\bar{u}_+(t))\geq\eta(u_-|\bar{u}_-(t)),\\
a\eta(u_+|\bar{u}_+(t))<\eta(u_+|\bar{u}_-(t)).
\end{align}
\emph{Case 4}
\begin{align}
a\eta(u_-|\bar{u}_+(t))\geq\eta(u_-|\bar{u}_-(t)),\\
a\eta(u_+|\bar{u}_+(t))\geq\eta(u_+|\bar{u}_-(t)).
\end{align}

Note that we allow for $u_+=u_-$.

We start with 

\emph{Case 1}

In this case, by \eqref{fact3}, \eqref{C_star_def}, and \eqref{where_h_lives} we know that 
\begin{equation}
\begin{aligned}\label{control_h}
\dot{h}(t)\leq -\frac{1}{c_4\gamma_0^2}\Bigg(\sup_{u,u_L,u_R\in B_{B}(0)}\abs{aq(u;u_R)-q(u;u_L)}+1\Bigg)-\sup_{u\in B_{B}(0)}\abs{\lambda_1(u)}
\\
<\inf_{u\in B_{B}(0)} \lambda_1(u).
\end{aligned} 
\end{equation}

If $u_+\neq u_-$, then we have \eqref{fact4} and \eqref{fact5}. But then, \eqref{control_h} contradicts $(\mathcal{H}2)$. Thus, $u_+= u_-$.

Let $v\coloneqq u_+=u_-$.

If $\mbox{dist}(v,\{w|\eta(w|\bar{u}_-(t))\leq a\eta(w|\bar{u}_+(t))\})\geq \gamma_0$, then

\begin{equation}
\begin{aligned}\label{dissipation_negative_claim_proof_case1_1}
&a\bigg(q(u_+;\bar{u}_+(t))-\dot{h}(t)\eta(u_+|\bar{u}_+(t))\bigg)-q(u_-;\bar{u}_-(t))+\dot{h}(t)\eta(u_-|\bar{u}_-(t)) \\
&=a\bigg(q(v;\bar{u}_+(t))-\dot{h}(t)\eta(v|\bar{u}_+(t))\bigg)-q(v;\bar{u}_-(t))+\dot{h}(t)\eta(v|\bar{u}_-(t))\\
&=aq(v;\bar{u}_+(t))-q(v;\bar{u}_-(t))-\dot{h}(t)\big(a\eta(v|\bar{u}_+(t))-\eta(v|\bar{u}_-(t))\big)\\
&\leq -1,
\end{aligned}
\end{equation}
because of \eqref{control_h} and \eqref{bound_on_inf}. Because the term $\abs{\dot{s}(t)-\dot{h}(t)}^2$ on the right hand side of \eqref{dissipation_negative_claim} is bounded due to \eqref{fact1} and $s$ being Lipschitz, we have proven \eqref{dissipation_negative_claim} by choosing $c$ sufficiently small.

If on the other hand, $\mbox{dist}(v,\{w|\eta(w|\bar{u}_-(t))\leq a\eta(w|\bar{u}_+(t))\})< \gamma_0$, then

\begin{equation}
\begin{aligned}\label{dissipation_negative_claim_proof_case1_2}
&a\bigg(q(u_+;\bar{u}_+(t))-\dot{h}(t)\eta(u_+|\bar{u}_+(t))\bigg)-q(u_-;\bar{u}_-(t))+\dot{h}(t)\eta(u_-|\bar{u}_-(t)) \\
&=a\bigg(q(v;\bar{u}_+(t))-\dot{h}(t)\eta(v|\bar{u}_+(t))\bigg)-q(v;\bar{u}_-(t))+\dot{h}(t)\eta(v|\bar{u}_-(t)) \\
&=aq(v;\bar{u}_+(t))-q(v;\bar{u}_-(t))-\dot{h}(t)\big(a\eta(v|\bar{u}_+(t))-\eta(v|\bar{u}_-(t))\big)\\
&\leq a\bigg(q(v;\bar{u}_+(t))-\lambda_1(v)\eta(v|\bar{u}_+(t))\bigg)-q(v;\bar{u}_-(t))+\lambda_1(v)\eta(v|\bar{u}_-(t)),\\
&\mbox{because $\eta(v|\bar{u}_-(t))-a\eta(v|\bar{u}_+(t))\geq 0$ and $\dot{h}\leq -\sup_{u\in B_{B}(0)}\abs{\lambda_1(u)}$. Continuing,} \\
&\mbox{we get}
\\
&\leq -\frac{1}{2}c_1,
\end{aligned}
\end{equation}
from \eqref{dissipation_negative_boundary_of_convex_set}, the definition of $\gamma_0$ \eqref{epsilon_0_def}, the assumption that 
\begin{align}
\mbox{dist}(v,\{w|\eta(w|\bar{u}_-(t))\leq a\eta(w|\bar{u}_+(t))\})< \gamma_0
\end{align}
 and the assumption that $r(t)>\rho$ for all $t$, where $r(t)$ satisfies $S^1_{\bar{u}_-(t)}(r(t))=\bar{u}_+(t)$. Again because the term $\abs{\dot{s}(t)-\dot{h}(t)}^2$ on the right hand side of \eqref{dissipation_negative_claim} is bounded due to \eqref{fact1} and $s$ being Lipschitz, we have proven \eqref{dissipation_negative_claim} by choosing $c$ sufficiently small. Note $c$ will depend on $\rho$.

\emph{Case 2}

In this case, we must have $u_-\neq u_+$. Recall also that \eqref{system} is hyperbolic. Furthermore, we have from \eqref{fact3} that $\dot{h}\in \Bigg[-\frac{1}{c_4\gamma_0^2}\Bigg(\sup_{u,u_L,u_R\in B_{B}(0)}\abs{aq(u;u_R)-q(u;u_L)}+1\Bigg)-\sup_{u\in B_{B}(0)}\abs{\lambda_1(u)},\lambda_1(u_+)\Bigg]$. However, this implies that $(u_+,u_-,\dot{h})$ is a right 1-contact discontinuity (see \cite[p.~274]{dafermos_big_book}). This contradicts the hypothesis $(\mathcal{H}2)$ on the shock $(u_+,u_-,\dot{h})$, which is entropic for $\eta$ because of \eqref{fact4} and \eqref{fact5}. The hypothesis $(\mathcal{H}2)$ forbids right 1-contact discontinuities. Thus, we conclude that this case (\emph{Case 2}) cannot actually occur.

\emph{Case 3}

In this case, we have from \eqref{fact3} that 
\begin{align}
\dot{h}\in \Bigg[-\frac{1}{c_4\gamma_0^2}\Bigg(\sup_{u,u_L,u_R\in B_{B}(0)}\abs{aq(u;u_R)-q(u;u_L)}+1\Bigg)-\sup_{u\in B_{B}(0)}\abs{\lambda_1(u)},\lambda_1(u_-)\Bigg].
\end{align}
By the hypothesis $(\mathcal{H}3)$, along with \eqref{fact4}, \eqref{fact5}, we have that $(u_+,u_-,\dot{h})$ must be a 1-shock. Also, $u_-$ verifies $a\eta(u_-|\bar{u}_+(t))\geq\eta(u_-|\bar{u}_-(t))$. Thus, we can apply \Cref{dissipation_negative_theorem}. Recall that $r(t)>\rho$ for all $t$, where $r(t)$ satisfies $S^1_{\bar{u}_-(t)}(r(t))=\bar{u}_+(t)$. We receive  \eqref{dissipation_negative_claim}.

\emph{Case 4}

In this case, we have from \eqref{fact3} that $\dot{h}\in I[\lambda_1(u_+),\lambda_1(u_-)]$. Then, by the hypothesis $(\mathcal{H}2)$, along with \eqref{fact4}, \eqref{fact5}, we know that we cannot have
\begin{align}\label{this_would_imply_bad}
I[\lambda_1(u_+),\lambda_1(u_-)]=(\lambda_1(u_-),\lambda_1(u_+))
\end{align}
because then \eqref{this_would_imply_bad} would imply that $(u_+,u_-,\dot{h})$ is a right 1-contact discontinuity. However, $(\mathcal{H}2)$ prevents right 1-contact discontinuities. Recall $(\mathcal{H}3)$. We conclude that $(u_+,u_-,\dot{h})$ is a 1-shock. Moreover, $u_-$ verifies $a\eta(u_-|\bar{u}_+(t))\geq\eta(u_-|\bar{u}_-(t))$. We can now apply \Cref{dissipation_negative_theorem}. Recall that $r(t)>\rho$ for all $t$, where $r(t)$ satisfies $S^1_{\bar{u}_-(t)}(r(t))=\bar{u}_+(t)$. This gives \eqref{dissipation_negative_claim}.

\section{Proof of main theorem \Cref{local_stability_systems}}\label{proof_main_theorem}

Note that if $\bar{u}$ contains an n-shock, then  the solution $(x,t)\mapsto \bar{u}(-x,t)$ to the system $\partial_t u -f(u)=G(u)$ will have 1-shock for this system. Thus, we can always assume $\bar{u}$ has a 1-shock.

Let $h$ be as in \Cref{systems_entropy_dissipation_room}. 

Define
\begin{equation}
\begin{aligned}\label{h_defs}
h_1(t)\coloneqq -R+s(0)+r(t-t_0),\\
h_2(t)\coloneqq R+s(0)-r(t-t_0),
\end{aligned}
\end{equation}
where $r>0$ verifies
\begin{align}\label{r_def}
\abs{q(u;\bar{u})}\leq r \eta(u|\bar{u}).
\end{align}
Such an $r>0$ exists because $u$ and $\bar{u}$ are bounded, $q(a;b)$ and $\eta(a|b)$ are both locally quadratic in $a-b$, and $\eta$ is strictly convex.

Then we apply \Cref{local_entropy_dissipation_rate_systems} to $h_1$ and $h$. This yields,
\begin{equation}
\begin{aligned}\label{local_compatible_dissipation_calc_left}
\int\limits_{0}^{t_0} \bigg[q(u(h_1(t)+,t);\bar{u}((h_1(t)+X(t))+,t))-q(u(h(t)-,t);\bar{u}((h(t)+X(t))-,t))\hspace{.5in}
\\
+\dot{h}(t)\eta(u(h(t)-,t)|\bar{u}((h(t)+X(t))-,t))\hspace{2in}
\\
-\dot{h}_1(t)\eta(u(h_1(t)+,t)|\bar{u}((h_1(t)+X(t))+,t))\bigg]\,dt\hspace{1.57in}
\\
\geq
\int\limits_{h_1(t_0)}^{h(t_0)}\eta(u(x,t_0)|\bar{u}(x+X(t_0),t_0))\,dx
-\int\limits_{h_1(0)}^{h(0)}\eta(u^0(x)|\bar{u}^0(x))\,dx
\\
+\int\limits_{0}^{t_0}\int\limits_{h_1(t)}^{h(t)}\Bigg(\partial_x \bigg|_{(x+X(t),t)}\hspace{-.45in} \bar{u}^T(x,t)\Bigg)\nabla^2\eta(\bar{u}(x+X(t),t)) f(u(x,t)|\bar{u}(x+X(t),t))
\\
+\Bigg(2\partial_x\bigg|_{(x+X(t),t)}\hspace{-.45in}\bar{u}^T(x,t)\dot{X}(t)\Bigg)\nabla^2\eta(\bar{u}(x+X(t),t))[u(x,t)-\bar{u}(x+X(t),t)]
\\
-\nabla\eta(u(x,t)|\bar{u}(x+X(t),t))G(u(\cdot,t))(x)
\\
+
\Bigg(G(\bar{u}(\cdot,t))(x+X(t))-G(u(\cdot,t))(x)\Bigg)^T\nabla^2\eta(\bar{u}(x+X(t),t))[u(x,t)-\bar{u}(x+X(t),t)]\,dxdt,
\end{aligned}
\end{equation}
where
\begin{equation}
\begin{aligned}\label{defs_z_X}
f(u|\bar{u})\coloneqq f(u)-f(\bar{u})-\nabla f (\bar{u})(u-\bar{u}),\\
X(t)\coloneqq s(t)-h(t).
\end{aligned}
\end{equation}
Similarly, we apply \Cref{local_entropy_dissipation_rate_systems} to $h$ and $h_2$. This yields,
\begin{equation}
\begin{aligned}\label{local_compatible_dissipation_calc_right}
\int\limits_{0}^{t_0} \bigg[q(u(h(t)+,t);\bar{u}((h(t)+X(t))+,t))-q(u(h_2(t)-,t);\bar{u}((h_2(t)+X(t))-,t))\hspace{.5in}
\\
+\dot{h}_2(t)\eta(u(h_2(t)-,t)|\bar{u}((h_2(t)+X(t))-,t))\hspace{2in}
\\
-\dot{h}(t)\eta(u(h(t)+,t)|\bar{u}((h(t)+X(t))+,t))\bigg]\,dt\hspace{1.95in}
\\
\geq
\int\limits_{h(t_0)}^{h_2(t_0)}\eta(u(x,t_0)|\bar{u}(x+X(t_0),t_0))\,dx
-\int\limits_{h(0)}^{h_2(0)}\eta(u^0(x)|\bar{u}^0(x))\,dx
\\
+\int\limits_{0}^{t_0}\int\limits_{h(t)}^{h_2(t)}\Bigg(\partial_x \bigg|_{(x+X(t),t)}\hspace{-.45in} \bar{u}^T(x,t)\Bigg)\nabla^2\eta(\bar{u}(x+X(t),t)) f(u(x,t)|\bar{u}(x+X(t),t))
\\
+\Bigg(2\partial_x\bigg|_{(x+X(t),t)}\hspace{-.45in}\bar{u}^T(x,t)\dot{X}(t)\Bigg)\nabla^2\eta(\bar{u}(x+X(t),t))[u(x,t)-\bar{u}(x+X(t),t)]
\\
-\nabla\eta(u(x,t)|\bar{u}(x+X(t),t))G(u(\cdot,t))(x)
\\
+
\Bigg(G(\bar{u}(\cdot,t))(x+X(t))-G(u(\cdot,t))(x)\Bigg)^T\nabla^2\eta(\bar{u}(x+X(t),t))[u(x,t)-\bar{u}(x+X(t),t)]\,dxdt.
\end{aligned}
\end{equation}

We combine \eqref{local_compatible_dissipation_calc_left} and $a$ multiples of \eqref{local_compatible_dissipation_calc_right}. This gives,

\begin{equation}
\begin{aligned}\label{local_compatible_dissipation_calc_left_and_right}
\int\limits_{0}^{t_0} \bigg[a\Bigg(q(u(h(t)+,t);\bar{u}((h(t)+X(t))+,t))-\dot{h}(t)\eta(u(h(t)+,t)|\bar{u}((h(t)+X(t))+,t))\Bigg)
\\
+\dot{h}(t)\eta(u(h(t)-,t)|\bar{u}((h(t)+X(t))-,t))
-q(u(h(t)-,t);\bar{u}((h(t)+X(t))-,t))
\\
+aq(u(h_1(t)+,t);\bar{u}((h_1(t)+X(t))+,t))
-a\dot{h}_1(t)\eta(u(h_1(t)+,t)|\bar{u}((h_1(t)+X(t))+,t))
\\
-q(u(h_2(t)-,t);\bar{u}((h_2(t)+X(t))-,t))
+\dot{h}_2(t)\eta(u(h_2(t)-,t)|\bar{u}((h_2(t)+X(t))-,t))\bigg]\,dt
\\
\geq
\Bigg[a\int\limits_{h_1(t_0)}^{h(t_0)}\eta(u(x,t_0)|\bar{u}(x+X(t_0),t_0))\,dx+\int\limits_{h(t_0)}^{h_2(t_0)}\eta(u(x,t_0)|\bar{u}(x+X(t_0),t_0))\,dx\Bigg]
\\
-\Bigg[a\int\limits_{h_1(0)}^{h(0)}\eta(u^0(x)|\bar{u}^0(x))\,dx+\int\limits_{h(0)}^{h_2(0)}\eta(u^0(x)|\bar{u}^0(x))\,dx\Bigg]
\\
+\int\limits_{0}^{t_0}\int\limits_{\mathbb{R}}\mathbbm{1}_a(x)\Bigg[\Bigg(\partial_x \bigg|_{(x+X(t),t)}\hspace{-.45in} \bar{u}^T(x,t)\Bigg)\nabla^2\eta(\bar{u}(x+X(t),t)) f(u(x,t)|\bar{u}(x+X(t),t))
\\
+\Bigg(2\partial_x\bigg|_{(x+X(t),t)}\hspace{-.45in}\bar{u}^T(x,t)\dot{X}(t)\Bigg)\nabla^2\eta(\bar{u}(x+X(t),t))[u(x,t)-\bar{u}(x+X(t),t)]
-
\\
\nabla\eta(u(x,t)|\bar{u}(x+X(t),t))G(u(\cdot,t))(x)
\\
+
\Bigg(G(\bar{u}(\cdot,t))(x+X(t))-G(u(\cdot,t))(x)\Bigg)^T\nabla^2\eta(\bar{u}(x+X(t),t))[u(x,t)-\bar{u}(x+X(t),t)]\Bigg]\,dxdt,
\end{aligned}
\end{equation}
where 
\begin{align}
\mathbbm{1}_a(x)\coloneqq a\mathbbm{1}_{\{x|h_1(t)<x<h(t)\}}(x)+\mathbbm{1}_{\{x|h(t)<x<h_2(t)\}}(x).
\end{align}

We estimate the last term on the right hand side of \eqref{local_compatible_dissipation_calc_left_and_right}, which is of the form
\begin{align}
\int\limits_{0}^{t_0}\int\limits_{\mathbb{R}}\overbracket[.5pt][7pt]{\mathbbm{1}_a(x)}^{L^\infty(\mathbb{R})}\overbracket[.5pt][7pt]{\Bigg[\cdots\Bigg]}^{L^1([h_1(t),h_2(t)])}\,dxdt,
\end{align}
using the indicated H\"older dualities.

We then want to estimate from above the term 
\begin{equation}
\begin{aligned}\label{big_term_estimate_above}
&\int\limits_{h_1(t)}^{h_2(t)}\Bigg|\overbracket[.5pt][7pt]{\Bigg(\partial_x \bigg|_{(x+X(t),t)}\hspace{-.45in} \bar{u}^T(x,t)\Bigg)\nabla^2\eta(\bar{u}(x+X(t),t))}^{L^\infty([h_1(t),h_2(t)])}\overbracket[.5pt][7pt]{f(u(x,t)|\bar{u}(x+X(t),t))}^{L^1([h_1(t),h_2(t)])}
\\
&+\dot{X}(t)\overbracket[.5pt][7pt]{\Bigg(2\partial_x\bigg|_{(x+X(t),t)}\hspace{-.45in}\bar{u}^T(x,t)\Bigg)}^{L^2([h_1(t),h_2(t)])}\overbracket[.5pt][7pt]{\nabla^2\eta(\bar{u}(x+X(t),t))}^{L^\infty([h_1(t),h_2(t)])}\overbracket[.5pt][7pt]{[u(x,t)-\bar{u}(x+X(t),t)]}^{L^2([h_1(t),h_2(t)])}
\\
&-\overbracket[.5pt][7pt]{\nabla\eta(u(x,t)|\bar{u}(x+X(t),t))}^{L^1([h_1(t),h_2(t)])}\overbracket[.5pt][7pt]{G(u(\cdot,t))(x)}^{L^\infty([h_1(t),h_2(t)])}
\\
&+
\overbracket[.5pt][7pt]{\Bigg(G(\bar{u}(\cdot,t))(x+X(t))-G(u(\cdot,t))(x)\Bigg)^T}^{L^2([h_1(t),h_2(t)])}\overbracket[.5pt][7pt]{\nabla^2\eta(\bar{u}(x+X(t),t))}^{L^\infty([h_1(t),h_2(t)])}\overbracket[.5pt][7pt]{[u(x,t)-\bar{u}(x+X(t),t)]}^{L^2([h_1(t),h_2(t)])}\Bigg|\,dx.
\end{aligned}
\end{equation}

We use the H\"older dualities indicated above. In particular, recall that $f(a|b)$ is locally quadratic in  $a-b$ and that $\partial_x \bar{u}\in L^\infty(\mathbb{R}\times[0,T))$ due to $\bar{u}$ being Lipschitz continuous.

Note that from $G:(L^2(\mathbb{R}))^n\to (L^2(\mathbb{R}))^n$ being translation invariant and from \eqref{G_acts_like}, we have
\begin{equation}
\begin{aligned}\label{control_on_difference_G}
&\norm{G(\bar{u}(\cdot,t))(\cdot+X(t))-G(u(\cdot,t))(\cdot)}_{L^2([h_1(t),h_2(t)]} \\
&\hspace{1in}=\norm{G(\bar{u}(\cdot+X(t),t))(\cdot)-G(u(\cdot,t))(\cdot)}_{L^2([h_1(t),h_2(t)])}
\\
&\hspace{1in}\leq
C_G\norm{\bar{u}(\cdot+X(t),t)-u(\cdot,t)}_{L^2([h_1(t),h_2(t)])},
\end{aligned}
\end{equation}

where $C_G$ is from \eqref{G_acts_like}. 

Recall also \eqref{G_acts_like_2}.

Note also that we can estimate,
\begin{align}\label{estimate_partial_x_u_bar}
\norm{\partial_x\bar{u}(\cdot+X(t),t)}_{L^2([h_1(t),h_2(t)])}
\leq
\sqrt{2(R+rT)}\norm{\partial_x\bar{u}}_{L^\infty(\mathbb{R}\times[0,T))}=\sqrt{2(R+rT)}\mbox{Lip}[\bar{u}].
\end{align}

For $\norm{\partial_x\bar{u}(\cdot+X(t),t)}_{L^2([h_1(t),h_2(t)])}\norm{\nabla^2\eta(\bar{u})}_{L^\infty}\neq0$ we have, from using the `Young's inequality with $\epsilon$,'
\begin{equation}
\begin{aligned}\label{youngs_inequality_part1}
&\abs{\dot{X}(t)}\norm{u(\cdot,t)-\bar{u}(\cdot+X(t),t)}_{L^2([h_1(t),h_2(t)])}  
\\
&\hspace{.3in}\leq\frac{c}{4\norm{\partial_x\bar{u}(\cdot+X(t),t)}_{L^2([h_1(t),h_2(t)])}\norm{\nabla^2\eta(\bar{u})}_{L^\infty}}(\dot{X}(t))^2
\\
&\hspace{.3in}+\frac{\norm{\partial_x\bar{u}(\cdot+X(t),t)}_{L^2([h_1(t),h_2(t)])}\norm{\nabla^2\eta(\bar{u})}_{L^\infty}}{c} \norm{u(\cdot,t)-\bar{u}(\cdot+X(t),t)}_{L^2([h_1(t),h_2(t)])}^2,
\end{aligned}
\end{equation}
where $c$ is from the right hand side of \eqref{dissipation_negative_claim}. Note that $c$ depends on $\rho$, $\norm{u}_{L^\infty}$, $\norm{\bar{u}(s(t)+,t)}_{L^\infty([0,T))}$, $\norm{\bar{u}(s(t)-,t)}_{L^\infty([0,T))}$, and $a$. From \eqref{youngs_inequality_part1}, we get
\begin{equation}
\begin{aligned}\label{youngs_inequality}
&2\abs{\dot{X}(t)}\norm{\partial_x\bar{u}(\cdot+X(t),t)}_{L^2([h_1(t),h_2(t)])}\norm{\nabla^2\eta(\bar{u})}_{L^\infty}\norm{u(\cdot,t)-\bar{u}(\cdot+X(t),t)}_{L^2([h_1(t),h_2(t)])}
\\
&\leq\frac{c}{2}(\dot{X}(t))^2+\frac{2\norm{\partial_x\bar{u}(\cdot+X(t),t)}_{L^2([h_1(t),h_2(t)])}^2\norm{\nabla^2\eta(\bar{u})}_{L^\infty}^2}{c} \norm{u(\cdot,t)-\bar{u}(\cdot+X(t),t)}_{L^2([h_1(t),h_2(t)])}^2.
\end{aligned}
\end{equation}
If for some $t$, $\norm{\partial_x\bar{u}(\cdot+X(t),t)}_{L^2([h_1(t),h_2(t)])}\norm{\nabla^2\eta(\bar{u})}_{L^\infty}=0$, then we don't have to estimate the term
\begin{align}
\dot{X}(t)\Bigg(2\partial_x\bigg|_{(x+X(t),t)}\hspace{-.45in}\bar{u}^T(x,t)\Bigg)\nabla^2\eta(\bar{u}(x+X(t),t))[u(x,t)-\bar{u}(x+X(t),t)].
\end{align}

Recall \eqref{h_defs} and \eqref{r_def}. Note in particular we have $\dot{h}_1=r$ and $\dot{h}_2=-r$. Then from \eqref{dissipation_negative_claim} (in \Cref{systems_entropy_dissipation_room}) and \eqref{youngs_inequality}, we get

\begin{equation}
\begin{aligned}\label{local_compatible_dissipation_calc_left_and_right_one_piece}
&-\int\limits_{0}^{t_0}\int\limits_{\mathbb{R}}\Bigg[
\Bigg(2\partial_x\bigg|_{(x+X(t),t)}\hspace{-.45in}\bar{u}^T(x,t)\dot{X}(t)\Bigg)\nabla^2\eta(\bar{u}(x+X(t),t))[u(x,t)-\bar{u}(x+X(t),t)]
\Bigg]\,dxdt
\\
&\hspace{.5in}+
\int\limits_{0}^{t_0} \bigg[a\Bigg(q(u(h(t)+,t);\bar{u}((h(t)+X(t))+,t))-\dot{h}(t)\eta(u(h(t)+,t)|\bar{u}((h(t)+X(t))+,t))\Bigg)
\\
&\hspace{.5in}+\dot{h}(t)\eta(u(h(t)-,t)|\bar{u}((h(t)+X(t))-,t))
-q(u(h(t)-,t);\bar{u}((h(t)+X(t))-,t))
\\
&\hspace{.5in}+aq(u(h_1(t)+,t);\bar{u}((h_1(t)+X(t))+,t))
-a\dot{h}_1(t)\eta(u(h_1(t)+,t)|\bar{u}((h_1(t)+X(t))+,t))
\\
&\hspace{.5in}-q(u(h_2(t)-,t);\bar{u}((h_2(t)+X(t))-,t))
+\dot{h}_2(t)\eta(u(h_2(t)-,t)|\bar{u}((h_2(t)+X(t))-,t))\bigg]\,dt
\\
&\hspace{.2in}\leq
\int\limits_0^{t_0} -\frac{c}{2}(\dot{X}(t))^2
\\ &\hspace{.2in}+
\frac{2\norm{\partial_x\bar{u}(\cdot+X(t),t)}_{L^2([h_1(t),h_2(t)])}^2\norm{\nabla^2\eta(\bar{u})}_{L^\infty}^2}{c} \norm{u(\cdot,t)-\bar{u}(\cdot+X(t),t)}_{L^2([h_1(t),h_2(t)])}^2\,dt.
\end{aligned}
\end{equation}

Recall \eqref{control_on_difference_G}, \eqref{estimate_partial_x_u_bar}, and \eqref{local_compatible_dissipation_calc_left_and_right_one_piece}. Recall also \eqref{h_defs} and \eqref{defs_z_X}. Further, recall from \Cref{systems_entropy_dissipation_room} that $h(0)=s(0)$. Recall also that from \Cref{systems_entropy_dissipation_room}, we know the constant $c$ depends on $\rho$, $\norm{u}_{L^\infty}$, and $\norm{\bar{u}}_{L^\infty}$. Lastly, recall that $f(a|b)$, $\eta(a|b)$, and $\nabla\eta(a|b)$ are locally quadratic in $a-b$ (recall $\eta\in C^3(\mathbb{R}^n)$), and from the strict convexity of $\eta$ we in fact have \Cref{entropy_relative_L2_control_system}. Then, from \eqref{local_compatible_dissipation_calc_left_and_right}, we receive

\begin{equation}
\begin{aligned}\label{right_before_Gronwall}
\mu_1\int\limits_0^{t_0}\int\limits_{h_1(t)}^{h_2(t)}\abs{u(x,t)-\bar{u}(x+X(t),t)}^2\,dxdt
+\mu_2\int\limits_{-R-rt_0+s(0)}^{R+rt_0+s(0)}\abs{u^0(x)-\bar{u}^0(x)}^2\,dx
\\
-\frac{1}{\mu_2}\int\limits_0^{t_0} (\dot{X}(t))^2\,dt
\geq 
\int\limits_{-R+s(0)}^{R+s(0)}\abs{u(x,t_0)-\bar{u}(x+X(t_0),t_0)}^2\,dx
\end{aligned}
\end{equation}
for all $t_0\in[0,T)$, where $\mu_1,\mu_2>0$ are constants depending on $a$, $\rho$, $\norm{u}_{L^\infty}$, $\norm{\bar{u}}_{L^\infty}$, and bounds on the derivatives of $\eta$ on the range of $u$ and $\bar{u}$.    Furthermore,  $\mu_1$ also depends on $C_G$ (see \eqref{G_acts_like} and \eqref{G_acts_like_2}), $\mbox{Lip}[\bar{u}]$, $\rho$, $R$, $T$, and bounds on the derivatives of $f$ on the range of $u$ and $\bar{u}$. Note that $r$ (see \eqref{r_def}) only depends on bounds on the derivatives of $f$ and $\eta$ on the (range of $u$ and $\bar{u}$). The constant $a$ then itself depends on $\rho$, $\norm{u}_{L^\infty}$, and $\norm{\bar{u}}_{L^\infty}$ (see \Cref{dissipation_negative_theorem}).

We can drop the last term on the left hand side of \eqref{right_before_Gronwall}, to get 
\begin{equation}
\begin{aligned}\label{right_before_Gronwall462019}
\mu_1\int\limits_0^{t_0}\int\limits_{h_1(t)}^{h_2(t)}\abs{u(x,t)-\bar{u}(x+X(t),t)}^2\,dxdt
+\mu_2\int\limits_{-R-rt_0+s(0)}^{R+rt_0+s(0)}\abs{u^0(x)-\bar{u}^0(x)}^2\,dx
\\
\geq 
\int\limits_{-R+s(0)}^{R+s(0)}\abs{u(x,t_0)-\bar{u}(x+X(t_0),t_0)}^2\,dx.
\end{aligned}
\end{equation}

We then apply the Gronwall inequality to \eqref{right_before_Gronwall462019}. This yields,
\begin{align}\label{Gronwall_in_proof_piecewise_systems}
&\int\limits_{-R+s(0)}^{R+s(0)}\abs{u(x,t_0)-\bar{u}(x+X(t_0),t_0)}^2\,dx
\\
&\hspace{1in}\leq \mu_2 e^{\mu_1 t_0}\Bigg(\int\limits_{-R-rt_0+s(0)}^{R+rt_0+s(0)}\abs{u^0(x)-\bar{u}^0(x)}^2\,dx\Bigg).
\end{align}

From \eqref{Gronwall_in_proof_piecewise_systems}, we get  \eqref{main_local_stability_result}.

We now show \eqref{L2_control_shift_piecewise_systems}. From \eqref{right_before_Gronwall}, we get
\begin{equation}
\begin{aligned}\label{right_before_Gronwall_for_control_462019}
&\mu_1\int\limits_0^{t_0}\int\limits_{h_1(t)}^{h_2(t)}\abs{u(x,t)-\bar{u}(x+X(t),t)}^2\,dxdt
+\mu_2\int\limits_{-R-rt_0+s(0)}^{R+rt_0+s(0)}\abs{u^0(x)-\bar{u}^0(x)}^2\,dx
\\
&\hspace{3in}\geq 
\frac{1}{\mu_2}\int\limits_0^{t_0} (\dot{X}(t))^2\,dt.
\end{aligned}
\end{equation}
Then we bootstrap, and use \eqref{main_local_stability_result} to estimate the term 
\begin{align*}
\int\limits_{h_1(t)}^{h_2(t)}\abs{u(x,t)-\bar{u}(x+X(t),t)}^2\,dx
\end{align*}
in \eqref{right_before_Gronwall_for_control_462019}. This gives  \eqref{L2_control_shift_piecewise_systems}.

This proves \Cref{local_stability_systems}.

\section{Appendix}
\subsection{Proof of \Cref{a_cond_lemma_itself}}\label{appendix_a_cond_lemma_itself}

Throughout this proof, $C$ will denote a generic constant depending only on $B$.

We will first show that for $0<a<1$, the set $R_a$ is convex.

For $a<1$, we can rewrite
\begin{align}
\eta(u|u_L)\leq a\eta(u|u_R)
\end{align}
as 
\begin{align}
\label{convex_rewrite}
\eta(u)\leq \frac{1}{1-a}(\eta(u_L)-a\eta(u_R)-\nabla\eta(u_L)\cdot u_L +a\nabla\eta(u_R)\cdot u_R+(\nabla\eta(u_L)-a\nabla\eta(u_R))\cdot u).
\end{align}
The right hand side of \eqref{convex_rewrite} is (affine) linear in $u$. Thus the convexity of $\eta$ implies that $R_a= \{u | \eta(u|u_L)\leq a\eta(u|u_R)\}$ is convex. 

For $a<\frac{1}{2}$, we can rewrite  \eqref{convex_rewrite} to get
\begin{align}
\eta(u|u_L)&\leq \frac{a}{1-a}(\eta(u_L)-\eta(u_R)-\nabla\eta(u_L)\cdot u_L +\nabla\eta(u_R)\cdot u_R+(\nabla\eta(u_L)-\nabla\eta(u_R))\cdot u)\\
&\leq Ca(1+\abs{u}).
\end{align}

We combine this with \Cref{entropy_relative_L2_control_system} to get that for all $u\in R_a\cap B_{\theta}(u_L)$ (recalling $\theta<1$),
\begin{align}
\label{proto_cond_a}
\abs{u-u_L}^2\leq Ca(1+\abs{u}) \leq Ca.
\end{align}

Thus, when $\alpha$ satisfies \eqref{cond_a} with $C$ as in \eqref{proto_cond_a}, and $0<a<\alpha$, we have
\begin{align}
\abs{u-u_L}^2\leq Ca < \frac{\theta^2}{2}.
\end{align}
Thus $R_a\cap B_{\theta}(u_L)$ is strictly contained in $B_{\theta}(u_L)$. As we have shown, the set $R_a$ is convex. Thus $R_a$ is also connected, which implies that 
\begin{align}
R_a=R_a\cap B_{\theta}(u_L).
\end{align}
We conclude that $R_a\subset B_{\theta}(u_L)$ for all $0<a<\alpha$. This completes the proof.

\subsection{Proof of \Cref{Filippov_existence}}\label{Filippov_existence_section}
The following proof of \eqref{fact1}, \eqref{fact2}, and \eqref{fact3} is based on the proof of Proposition 1 in \cite{Leger2011}, the proof of Lemma 2.2 in \cite{serre_vasseur}, and the proof of Lemma 3.5 in \cite{2017arXiv170905610K}. We do not prove \eqref{fact4} or \eqref{fact5} here; these properties are in Lemma 6 in \cite{Leger2011}, and their proofs are in the appendix in \cite{Leger2011}.

Define

\begin{align}
v_n(x,t)\coloneqq \int\limits_0^1 V\bigg(u(x+\frac{y}{n},t),t\bigg)\,dy.
\end{align}

Let $h_{n}$ be the solution to the ODE:
\begin{align}
  \begin{cases}\label{n_ode}
   \dot{h}_n(t)=v_n(h_n(t),t),\mbox{ for }t>0\\
   h_n(0)=x_0.
  \end{cases}
\end{align}

The $v_n$ are uniformly bounded in $n$ because by assumption $V$ is bounded (\hspace{.07cm}$\norm{v_n}_{L^\infty}\leq \norm{V}_{L^\infty}$). The $v_n$ are measurable in $t$, and due to the mollification by $\frac{1}{n}$ are also Lipschitz continuous in $x$. Thus \eqref{n_ode} has a unique solution in the sense of Carath\'eodory.

The $h_n$ are Lipschitz continuous with Lipschitz constants uniform in $n$, due to the $v_n$ being uniformly bounded in $n$. Thus, by Arzel\`a--Ascoli the $h_n$ converge in $C^0(0,T)$ for any fixed $T>0$ to a Lipschitz continuous function $h$ (passing to a subsequence if necessary). Note that $\dot{h}_n$ converges in $L^\infty$ weak* to $\dot{h}$.

We define
\begin{align}
V_{\mbox{max}}(t)\coloneqq \max\{V(u_-,t),V(u_+,t)\},\\
V_{\mbox{min}}(t)\coloneqq \min\{V(u_-,t),V(u_+,t)\},
\end{align}
where $u_\pm \coloneqq u(h(t)\pm,t)$.

To show \eqref{fact3}, we will first prove that for almost every $t>0$
\begin{align}
\lim_{n\to\infty}[\dot{h}_n(t)-V_{\mbox{max}}(t)]_+=0,\label{limit_1}\\
\lim_{n\to\infty}[V_{\mbox{min}}(t)-\dot{h}_n(t)]_+=0,\label{limit_2}
\end{align}
where $[\hspace{.1cm}\cdot\hspace{.1cm}]_+\coloneqq\max(0,\cdot)$.

The proofs of \eqref{limit_1} and \eqref{limit_2} are similar; we only show the first one.

\begin{align}
[\dot{h}_n(t)-V_{\mbox{max}}(t)]_+\\
=\Bigg[\int\limits_0^1 V\bigg(u(h_n(t)+\frac{y}{n},t),t\bigg)\,dy-V_{\mbox{max}}(t)\Bigg]_+\\
=\Bigg[\int\limits_0^1 V\bigg(u(h_n(t)+\frac{y}{n},t),t\bigg)-V_{\mbox{max}}(t)\,dy\Bigg]_+\\
\leq\int\limits_0^1 \Big[V\bigg(u(h_n(t)+\frac{y}{n},t),t\bigg)-V_{\mbox{max}}(t)\Big]_+\,dy\\
\leq\esssup_{y\in(0,\frac{1}{n})} \Big[V\bigg(u(h_n(t)+y,t),t\bigg)-V_{\mbox{max}}(t)\Big]_+\\
\leq\esssup_{y\in(-\epsilon_n,\epsilon_n)} \Big[V\bigg(u(h(t)+y,t),t\bigg)-V_{\mbox{max}}(t)\Big]_+,\label{last_ineq_Filippov}
\end{align}
where $\epsilon_n\coloneqq \abs{h_n(t)-h(t)}+\frac{1}{n}$. Note $\epsilon_n\to0^+$.

Fix a $t\geq0$ such that $u$ has a strong trace in the sense of \Cref{strong_trace_definition}. Then because the map $u\mapsto V(u,t)$ is upper semi-continuous,
\begin{align}\label{esssuplim_is_zero}
\lim_{n\to\infty}\esssup_{y\in(0,\frac{1}{n})} \Big[V\bigg(u(h(t)\pm y,t),t\bigg)-V\big(u_\pm,t\big)\Big]_+=0,
\end{align}
where $u_\pm \coloneqq u(h(t)\pm,t)$. Recall that the map $u\mapsto V(u,t)$ being upper semi-continuous at the point $u_0$ means that 
\begin{align}
\limsup_{u\to u_0} V(u,t) \leq V(u_0,t).
\end{align}

From \eqref{esssuplim_is_zero}, we get
\begin{align}\label{esssuplim_is_zero2}
\lim_{n\to\infty}\esssup_{y\in(0,\frac{1}{n})} \Big[V\bigg(u(h(t)\pm y,t),t\bigg)-V_{\mbox{max}}(t)\Big]_+=0.
\end{align}

We can control \eqref{last_ineq_Filippov} from above by the quantity
\begin{equation}
\begin{aligned}\label{esssuplim_is_zero3}
\esssup_{y\in(-\epsilon_n,0)} \Big[V\bigg(u(h(t)+ y,t),t\bigg)-V_{\mbox{max}}(t)\Big]_++\\
\esssup_{y\in(0,\epsilon_n)} \Big[V\bigg(u(h(t)+ y,t),t\bigg)-V_{\mbox{max}}(t)\Big]_+.
\end{aligned}
\end{equation}

By \eqref{esssuplim_is_zero2}, we have that \eqref{esssuplim_is_zero3} goes to $0$ as $n\to\infty$. This proves \eqref{limit_1}.

Recall that $\dot{h}_n$ converges in $L^\infty$ weak* to $\dot{h}$. Thus, due to the convexity of the function $[\hspace{.1cm}\cdot\hspace{.1cm}]_+$,
\begin{align}
\int\limits_0^T[\dot{h}(t)-V_{\mbox{max}}(t)]_+\,dt\leq \liminf_{n\to\infty}\int\limits_0^T[\dot{h}_n(t)-V_{\mbox{max}}(t)]_+\,dt.
\end{align}

By the dominated convergence theorem and \eqref{limit_1},
\begin{align}
\liminf_{n\to\infty}\int\limits_0^T[\dot{h}_n(t)-V_{\mbox{max}}(t)]_+\,dt=0.
\end{align}

We conclude,
\begin{align}
\int\limits_0^T[\dot{h}(t)-V_{\mbox{max}}(t)]_+\,dt=0.
\end{align}

From a similar argument,
\begin{align}
\int\limits_0^T[V_{\mbox{min}}(t)-\dot{h}(t)]_+\,dt=0.
\end{align}

This proves \eqref{fact3}.

\bibliographystyle{plain}
\bibliography{references}
\end{document}